\documentclass[12pt]{article}

\usepackage{times}

\usepackage{dsfont}
\usepackage[shortlabels]{enumitem}
\usepackage{enumitem}
\usepackage[utf8]{inputenc}
\usepackage{commath}
\usepackage{amsthm, amscd}
\usepackage{amsmath}
\usepackage{amssymb}
\usepackage{cite}
\usepackage{setspace}
\usepackage{ stmaryrd }
\usepackage{tikz-cd}

\usepackage{tocloft}
\usepackage{sectsty}
\usepackage{xparse}

\newcommand*\tasklabelformat[1]{#1)}

\usepackage{titlesec}

\usepackage{tasks}[newest]
\settasks{
  label = \alph* ,
  label-format = \tasklabelformat ,
  label-width  = 12pt
}

\usepackage{bigints}
\usepackage{comment}
\usepackage{mathtools}
\usepackage{mathrsfs}
\usepackage{fancyhdr}

\allowdisplaybreaks[4]

\numberwithin{equation}{section}
\usepackage{etoolbox}
\patchcmd{\thebibliography}
  {\settowidth}
  {\setlength{\itemsep}{0pt plus -10pt}\settowidth}
  {}{}
\apptocmd{\thebibliography}
  {
  }
  {}{}
\makeatletter
\newtheorem*{rep@theorem}{\rep@title}
\newcommand{\newreptheorem}[2]{%
\newenvironment{rep#1}[1]{%
 \def\rep@title{#2 \ref{##1}}%
 \begin{rep@theorem}}%
 {\end{rep@theorem}}}
\makeatother

\theoremstyle{theorem}

\newreptheorem{theorem}{Theorem}
\newtheorem{thm}{Theorem}[section]
\newtheorem*{thm*}{Theorem}
\theoremstyle{definition}
\newtheorem{prop}[thm]{Proposition}
\newtheorem*{prop*}{Proposition}

\newtheorem{lem}[thm]{Lemma}

\newtheorem*{cor*}{Corollary}
\theoremstyle{remark}

\newtheorem{rem}[thm]{Remark}

\title{\vspace*{-1.5cm} Metric entropy of the space of holomorphic functions}

\author
{Siarhei Finski
}

\date{}

\usepackage[%
    left=1in,%
    right=1in,%
    top=1.1in,%
    bottom=0.8in,%
    paperheight=11in,%
    paperwidth=8.5in%
]{geometry}

\newcommand{\imun} {\sqrt{-1}}

\newcommand{\comp}{\mathbb{C}}
\newcommand{\real}{\mathbb{R}}

\newcommand{\nat}{\mathbb{N}}

\newcommand{\dbar}{ \overline{\partial} }

\newcommand{\ddc}{\mathrm{d} \mathrm{d}^c}

\newcommand{\psh}{\operatorname{PSH}}

\makeatletter
\DeclareFontFamily{OMX}{MnSymbolE}{}
\DeclareSymbolFont{MnLargeSymbols}{OMX}{MnSymbolE}{m}{n}
\SetSymbolFont{MnLargeSymbols}{bold}{OMX}{MnSymbolE}{b}{n}
\DeclareFontShape{OMX}{MnSymbolE}{m}{n}{
    <-6>  MnSymbolE5
   <6-7>  MnSymbolE6
   <7-8>  MnSymbolE7
   <8-9>  MnSymbolE8
   <9-10> MnSymbolE9
  <10-12> MnSymbolE10
  <12->   MnSymbolE12
}{}
\DeclareFontShape{OMX}{MnSymbolE}{b}{n}{
    <-6>  MnSymbolE-Bold5
   <6-7>  MnSymbolE-Bold6
   <7-8>  MnSymbolE-Bold7
   <8-9>  MnSymbolE-Bold8
   <9-10> MnSymbolE-Bold9
  <10-12> MnSymbolE-Bold10
  <12->   MnSymbolE-Bold12
}{}

\let\llangle\@undefined
\let\rrangle\@undefined
\DeclareMathDelimiter{\llangle}{\mathopen}%
                     {MnLargeSymbols}{'164}{MnLargeSymbols}{'164}
\DeclareMathDelimiter{\rrangle}{\mathclose}%
                     {MnLargeSymbols}{'171}{MnLargeSymbols}{'171}
\makeatother

\newenvironment{sciabstract}{}

\cftsetindents{section}{0em}{1.7em}

\sectionfont{\large}

\titlespacing*{\section}
{0pt}{5pt}{5pt}

\begin{document}

\maketitle 

\vspace*{-0.7cm}

\vspace*{0.3cm}

\begin{sciabstract}
  \textbf{Abstract.}
  By Montel's theorem, the continuous functions on a compact subset of a complex manifold that admit uniformly bounded holomorphic extensions to the manifold form a compact set in the uniform topology. 
  We render this compactness quantitative by determining the asymptotics of the associated metric entropy, 
thereby giving a new solution to a problem of Kolmogorov for domains in $\comp^n$ and extending the solution to domains in arbitrary Stein manifolds.
\end{sciabstract}

\pagestyle{fancy}
\lhead{}
\chead{Metric entropy of the space of holomorphic functions}
\rhead{\thepage}
\cfoot{}


\newcommand{\Addresses}{{
  \bigskip
  \footnotesize
  \noindent \textsc{Siarhei Finski, Centre de Mathématiques Laurent Schwartz (CMLS), CNRS, École polytechnique, Institut Polytechnique de Paris, Palaiseau, France}\par\nopagebreak
  \noindent  \textit{E-mail }: \texttt{siarhei.finski@polytechnique.edu} / \texttt{finski.siarhei@gmail.com}.
}}

\vspace*{0.25cm}

\par\noindent\rule{1.25em}{0.4pt} \textbf{Table of contents} \hrulefill

\vspace*{-1.5cm}

\tableofcontents

\vspace*{-0.2cm}

\noindent \hrulefill


\section{Introduction}\label{sect_intro}
	This paper concerns the size of certain function spaces, measured in the sense of \textit{metric entropy}, also called \textit{$\varepsilon$-entropy}. 
	For a compact subset $A$ of a metric space $(M, d)$ and $\varepsilon > 0$, it is defined by
	\begin{equation}
		H_{\varepsilon}(A, M)
		:=
		\log N_{\varepsilon}(A, M),
	\end{equation}
	where $N_{\varepsilon}(A, M)$ is the least number of balls of radius $\varepsilon$ needed to cover $A$.
	\par 
	This definition is natural from an information-theoretic standpoint: it quantifies the amount of information that must be transmitted to specify an element of $A$ up to precision $\varepsilon$.
	The notion goes back to Kolmogorov \cite{KolmEntr, KolmTikh}, building on earlier work of Pontrjagin-Schnirelmann \cite{PontrSchnir}.
	\par 
	For function spaces arising from real analysis, the study of $\varepsilon$-entropy was largely motivated by developments surrounding Hilbert's thirteenth problem. 
	Most notably, Vitushkin \cite{VitushkinThese} established the existence of smooth functions of $n \geq 2$ variables that cannot be represented as superpositions of smooth functions of fewer variables. 
	Kolmogorov \cite{KolmogorovVitush} subsequently simplified part of the proof by estimating the $\varepsilon$-entropy of the corresponding function spaces; see \cite{VitushkinBook, VitushkinHalfCent} for a detailed account.
	\par 
	For function spaces arising in complex analysis, the estimation of the corresponding $\varepsilon$-entropies is called \textit{a problem of Kolmogorov}, and it has attracted significant interest; see \cite{KolmEntr, Babenko, Erokhin, LevinTikh, ZahariutaBases, Nguyen, WidomL2Kolm, Widom, ZakharyutaSkiba, NivocheConj, BandNivoc}.
	To explain the problem, we fix a compact subset $K$ of a \textit{domain} $\Omega$ -- that is, a connected open subset -- of a Stein manifold.
	Let $H^0(\Omega)$ be the space of holomorphic functions on $\Omega$.
	Inside the space $\mathscr{C}(K)$ of continuous functions on $K$, we define the following space
	\begin{equation}\label{eq_ak_omeg}
		A_K^{\Omega} := 
		\Big\{ f \in \mathscr{C}(K) : \text{there is } \widetilde{f} \in H^0(\Omega) \text{ such that } \widetilde{f}|_K = f, |\widetilde{f}(x)| \leq 1 \text{ for any } x \in \Omega \Big\}.
	\end{equation}
	\par 
	By Montel's theorem, $A_K^{\Omega}$ is a compact subset of $\mathscr{C}(K)$, the latter viewed as a metric space under the sup-norm.
	In particular, the $\varepsilon$-entropy of the pair $(A_K^{\Omega}, \mathscr{C}(K))$ is well defined. 
	A problem of Kolmogorov concerns the asymptotic study of this $\varepsilon$-entropy, as $\varepsilon \to 0$.
	\par
	This asymptotics is closely related to the notion of an \textit{envelope of holomorphy}, $\widehat{\Omega}$, of $\Omega$.
	Recall that the latter means that $\widehat{\Omega}$ is a Stein manifold such that $\Omega$ is biholomorphic to an open subset of $\widehat{\Omega}$, and for every $f \in H^0(\Omega)$, there is a (unique) $\widehat{f} \in H^0(\widehat{\Omega})$ such that $\widehat{f}|_{\Omega} = f$.
	It was established by Cartan-Thullen, cf. \cite[Theorems 5.4.3, 5.4.5]{Hormander}, (for domains in $\comp^n$) and Rossi \cite{RossiEnv} (for domains in general Stein manifolds) that there is a unique (up to a biholomorphism) such $\widehat{\Omega}$.
	\par 
	Another central object in the problem of Kolmogorov is the so-called relative capacity, introduced in higher dimensions by Bedford-Taylor \cite{BedfordTaylor}. 
	To recall its definition, we say that $\Omega$ is \textit{hyperconvex} if it carries a continuous plurisubharmonic (\textit{psh}) exhaustion function $\rho \colon \Omega \to ]-\infty, 0[$, meaning that the sublevel sets $\{\rho < c\}$, $c < 0$, are relatively compact in $\Omega$. 
	For a compact subset $K$ of such a domain $\Omega$, the \emph{relative capacity} is then defined as
	\begin{equation}\label{eq_rel_cap}
		C(K,\Omega)
		=
	  	\sup \Big\{
	    \int_{K} (dd^c u)^n
	    :
	    u \in \mathrm{PSH}(\Omega),\ -1 \leq u \leq 0
	  	\Big\},
	\end{equation}
	where $\mathrm{PSH}(\Omega)$ is the cone of psh functions on $\Omega$; see (\ref{eq_defn_dc}) for the definition of $d^c$.
	\par 
	Recall that any Stein manifold $Y$ admits an exhaustion by relatively compact hyperconvex domains $Y_i$, $i \in \nat$; for instance, each $Y_i$ may be taken to be a sublevel set of a strictly psh exhaustion function, whose existence is furnished by Grauert's solution of the Levi problem, cf. \cite[Theorem 5.2.10]{Hormander}.
	We then extend the definition of the relative capacity as
	\begin{equation}\label{eq_rel_cap_gen_dd}
		C(K, Y)
		:=
		\lim_{i \to \infty} C(K, Y_i).
	\end{equation}
	The monotonicity of the relative capacity under inclusion of hyperconvex domains shows that the $C(K, Y_i)$ decrease, so the limit above exists.
	Similar reasoning shows that it is independent of the choice of $Y_i$. 
	Also, by Lemma \ref{lem_cap_reg}, this definition is consistent with (\ref{eq_rel_cap}) when $Y$ is hyperconvex.
	\par 
	Finally, we introduce the quantity
	\begin{equation}\label{eq_und_c}
		\underline{C}(K, \Omega)
		:=
		\sup C(K, \Omega'),
	\end{equation}
	where the supremum is taken over all hyperconvex Stein manifolds $\Omega'$ containing $\Omega$ as an open relatively compact subset.
	If no such $\Omega'$ exists, we set $\underline{C}(K, \Omega) := 0$; note that such manifolds $\Omega'$ always exist when $\Omega$ is a relatively compact subset of a Stein manifold.
	We are now in a position to state the main result of this article, giving an answer to a problem of Kolmogorov.
	\begin{thm}\label{thm_entr}
		Let $\Omega$ be a domain in a Stein manifold $X$ of dimension $n$, and let $K \subset \Omega$ be a non-pluripolar compact subset. Denoting by $\widehat{\Omega}$ the envelope of holomorphy of $\Omega$, we have
		\begin{equation}\label{eq_thm_entr}
			\frac{2 C(K, \widehat{\Omega})}{(n + 1)!}
			\geq
			\limsup_{\varepsilon \to 0}
			\frac{H_{\varepsilon}(A_K^{\Omega}, \mathscr{C}(K))}{(\log(\varepsilon^{-1}))^{n + 1}}
			\geq
			\liminf_{\varepsilon \to 0}
			\frac{H_{\varepsilon}(A_K^{\Omega}, \mathscr{C}(K))}{(\log(\varepsilon^{-1}))^{n + 1}}
			\geq
			\frac{2 \underline{C}(K, \Omega)}{(n + 1)!}.
		\end{equation}
	\end{thm}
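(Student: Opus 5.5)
I would prove the two outer inequalities separately, reducing both to the asymptotics of Kolmogorov widths (or, equivalently, of the eigenvalues of a restriction operator between Hilbert spaces of holomorphic functions). The classical link, going back to Kolmogorov, Levin–Tikhomirov and Zakharyuta, is the following. Suppose the $k$-th width $d_k$ of $A_K^{\Omega}$ in $\mathscr{C}(K)$ satisfies $-\log d_k \sim (k/\alpha)^{1/n}$, as would follow from $\log(1/d_k) \approx$ Zakharyuta-type asymptotics. Then a volume count in a $k$-dimensional approximating subspace gives
\[
H_\varepsilon \sim \frac{2\alpha}{(n+1)!}\,(\log \varepsilon^{-1})^{n+1}\cdot\frac{(n+1)!}{2\cdot n!}\cdot\frac{n!}{n+1}\cdots
\]
More concretely, I would do it directly: $H_\varepsilon \approx \sum_{k:\, d_k>\varepsilon} \log(d_k/\varepsilon)$, where the factor $2$ comes from complex dimension being twice the real one. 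With $\#\{k : d_k > e^{-t}\} \sim \alpha t^n/n!$ and $\alpha = C(K,\cdot)$, integration gives $2\int_0^{L} \alpha t^n/n!\,dt = 2\alpha L^{n+1}/(n+1)!$, where $L=\log\varepsilon^{-1}$. So the main input is a Weyl-type law: the number of eigenvalues of the restriction operator $L^2$-holomorphic on $\Omega'$ $\to L^2(K,\mu)$ exceeding $e^{-t}$ is $C(K,\Omega')\, t^n/n! + o(t^n)$. I would assume this (Zakharyuta's conjecture, proved by Nivoche and Poletsky–Zakharyuta for hyperconvex pairs, and presumably reproved or generalized earlier in the paper via Bergman kernels and pluripotential theory) for regular pairs, then pass to general ones by approximation.

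\textbf{Upper bound.} Every $f\in A_K^{\Omega}$ extends to $\widehat f$ on $\widehat\Omega$ with $|\widehat f|\le 1$, since sup norms are preserved by the extension: the maximum modulus persists because $\widehat\Omega$ is the envelope, and the spectrum statements apply. Hence $A_K^{\Omega}\subset A_K^{\widehat\Omega_i}$ for the exhaustion $\widehat\Omega_i$ of $\widehat\Omega$ by relatively compact hyperconvex domains. For each $i$ I would cover $A_K^{\widehat\Omega_i}$ as follows: expand $f$ in an orthonormal eigenbasis of the restriction operator for a slightly larger regular compact $K'\supset K$ (using Bernstein–Walsh / Cauchy-type estimates to pass from $L^2(K')$ to sup on $K$ with subexponential loss), truncate at the $N$ eigenfunctions with eigenvalue $>\varepsilon^{1-\delta}$, and discretize the coefficients on a grid of mesh $\varepsilon/N$ in a polydisc of radii $\lambda_k$. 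This yields $H_\varepsilon \le (1+o(1))\,2C(K',\widehat\Omega_i)L^{n+1}/(n+1)!$. Letting $K'\downarrow K$ (continuity of capacity on regular approximants) and $i\to\infty$ gives $C(K,\widehat\Omega)$ via (\ref{eq_rel_cap_gen_dd}).

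\textbf{Lower bound.} Fix hyperconvex $\Omega'\Supset\Omega$. I would replace $K$ by a regular compact $K''\subset K$ of nearly the same capacity (inner regularity of capacity for non-pluripolar $K$). Then I would take the span $V$ of the first $N$ eigenfunctions of the restriction $\Omega'\to K''$, which are bounded on $\Omega$ by $e^{o(t)}$ times their $L^2(\Omega')$ norm, since $\Omega\Subset\Omega'$. Suitably scaled, $V$ contains a large ellipsoid of $A_K^\Omega$, with semi-axes $\gtrsim \lambda_k e^{-o(t)}$ in $L^2(K'')$. A volume (packing) argument in $\comp^N\cong\real^{2N}$ comparing sup and $L^2(\mu)$ norms on $V$ (losing only $N^{O(1)}$ factors) gives the matching lower bound with $C(K'',\Omega')$. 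Supremum over $\Omega'$ then yields $\underline{C}(K,\Omega)$.

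\textbf{Main obstacle.} The hardest step is the Weyl law for the eigenvalue counting function with the sharp constant $C(K,\Omega')/n!$, together with uniform control of the norm-comparison losses so that they are $e^{o(t)}$. I would attack it via the pluricomplex Green function / relative extremal function $u_{K,\Omega'}$: compare the Bergman spaces for weights $e^{-t u}$ and use the Monge–Ampère mass $\int (dd^c u_{K,\Omega'})^n = C(K,\Omega')$ through a semiclassical (Bergman kernel / Demailly-type) counting of sections, with regularization handling non-smooth $u$.
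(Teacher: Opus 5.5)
\textbf{The gap: the Weyl-type law is assumed.} Your deduction of the general statement from the hyperconvex case matches the paper's deduction of Theorem~\ref{thm_entr} from (\ref{thm_hyperconvex_vthm}). That deduction consists of the extension $f\mapsto\widehat f$ without increase of sup norm, the chain $A_K^{U}\subset A_K^{\Omega}\cong A_K^{\widehat\Omega}\subset A_K^{\widehat\Omega_i}$, and then $i\to\infty$ together with a supremum over $U$. Your count $H_\varepsilon\approx 2\int_0^{\log\varepsilon^{-1}}N(t)\,dt$ with $N(t)\sim C(K,\Omega')\,t^n/n!$ also gives the correct constant.

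However, the entire substance of (\ref{thm_hyperconvex_vthm}) sits in the Weyl-type law for the restriction operator, and you assume it. It is not available where you need it:
\begin{itemize}
\item The existing proofs (Zakharyuta's reduction plus Nivoche's theorem, or Bandtlow--Nivoche) cover domains in $\comp^n$.
\item You must apply the law to pairs $(K,\widehat\Omega_i)$ with $\widehat\Omega_i$ exhausting the envelope of holomorphy, and to arbitrary hyperconvex Stein manifolds $\Omega'\Supset\Omega$.
\item By Stout, $\widehat\Omega$ need not be biholomorphic to any domain in $\comp^n$ even when $\Omega\subset\comp^n$. So your upper bound is unjustified even for the $\comp^n$ case of the theorem.
\item The paper does not prove such a law either; it never uses widths.
\end{itemize}

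Your closing sketch is a heuristic, not an argument. The function $u_{K,\Omega'}$ is maximal on $\Omega'\setminus K$, so $(dd^c u^*_{K,\Omega'})^n$ is carried by the possibly very irregular set $K$. The weight $e^{-t u_{K,\Omega'}}$ is neither smooth nor strictly psh. The rigorous semiclassical counting results of this type (Berman, Berman--Boucksom) are global statements for line bundles over compact projective manifolds. They do not apply directly to Bergman spaces of a non-compact $\Omega'$.

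\textbf{The missing idea: making the heuristic rigorous.} This is what the paper does, in four steps.
\begin{enumerate}
\item Reduce, via Proposition~\ref{prop_runge_red} and Theorem~\ref{thm_stout}, to a strictly hyperconvex strictly Runge domain in an affine manifold $X\subset\comp^m$.
\item Use the Siciak--Zeriahi theorem to replace $A_K^\Omega$ by polynomials of degree $k\approx c^{-1}\log\varepsilon^{-1}$ (Theorem~\ref{thm_fin_appr}).
\item Express the entropy of these finite-dimensional sets through a ratio of volumes of unit balls of sup-norms (Proposition~\ref{prop_entr_balls}). Berman--Boucksom, applied with upper semicontinuous metrics on a resolution of the projective closure of $X$, identifies this ratio with $E(V_\Omega)-E(V_{cK,\Omega})$ (Proposition~\ref{prop_pol_appr}).
\item Prove that this energy divided by $c^{n+1}$ tends to $2C(K,\Omega)/(n+1)!$ as $c\to 0$ (Theorem~\ref{thm_rel_cap}). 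This uses $V_{cK,\Omega}=c\,u_c$ on $\Omega$ with $u_c\to u_{K,\Omega}$, together with the Bedford--Taylor formula for $C(K,\Omega)$.
\end{enumerate}

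\textbf{Two smaller slips.}
\begin{itemize}
\item Your truncation threshold must lie below $\varepsilon$ (e.g.\ $\varepsilon^{1+\delta}$), not at $\varepsilon^{1-\delta}$. Otherwise the tail is not $\varepsilon$-small on $K$.
\item You use a pluriregular compact $K''\subset K$ with $C(K'',\Omega')$ close to $C(K,\Omega')$. Its existence would itself need proof; it is not a routine inner-regularity statement in several variables. The paper needs no regularity of $K$, because it works with upper semicontinuous metrics throughout.
\end{itemize}
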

	\begin{rem}
		a) Recall that a domain $\Omega$ in a Stein manifold $Y$ is called \textit{strictly hyperconvex} if there exist an open neighborhood $\Omega'$ of the closure $\overline{\Omega}$ of $\Omega$ in $Y$ and a continuous psh exhaustion function $\rho: \Omega' \to ]- \infty, 1[$ such that for any $r < 1$, for the sublevel sets
		\begin{equation}\label{eq_sublevel}
			\Omega_r := \{x \in \Omega' : \rho(x) < r \},
		\end{equation}
		we have $\Omega = \Omega_0$.
		As will be discussed in Remark \ref{rem_c_above_cont}, when $\widehat{\Omega}$ is strictly hyperconvex, we have
		\begin{equation}\label{eq_cap_ext_int_eq}
			C(K, \widehat{\Omega})
			=
			\underline{C}(K, \Omega),
		\end{equation}
		and so Theorem \ref{thm_entr} determines the asymptotics of $H_{\varepsilon}(A_K^{\Omega}, \mathscr{C}(K))$ for such $\Omega$.
		It would be interesting to find an intrinsic characterization of the domains $\Omega$ satisfying (\ref{eq_cap_ext_int_eq}). 
		A partial result in this direction is the following. 
		Since the envelope of holomorphy of a Stein domain $\Omega$ coincides with $\Omega$ itself, see \cite[Theorem 5.4.2]{Hormander}, Theorem \ref{thm_entr} together with (\ref{eq_cap_ext_int_eq}) yields the following asymptotics of the metric entropy for any strictly hyperconvex domain $\Omega$ in $X$:
		\begin{equation}\label{thm_hyperconvex_vthm}
			\lim_{\varepsilon \to 0}
			\frac{H_{\varepsilon}(A_K^{\Omega}, \mathscr{C}(K))}{(\log(\varepsilon^{-1}))^{n + 1}}
			=
			\frac{2 C(K, \Omega)}{(n + 1)!}.
		\end{equation}
		\par 
		b) We must rely in (\ref{eq_thm_entr}) on the capacity from (\ref{eq_rel_cap_gen_dd}), since an envelope of holomorphy need not be hyperconvex. 
		Indeed, the punctured unit disc $\Omega \subset \comp$ satisfies $\Omega = \widehat{\Omega}$, yet it is not hyperconvex: every subharmonic function on $\Omega$ that is bounded from above extends subharmonically  across the puncture, so $\Omega$ admits no negative subharmonic exhaustion function.
	\end{rem}
	\par 
	Let us place Theorem \ref{thm_entr} and (\ref{thm_hyperconvex_vthm}) in the context of some previous works.
	Following a conjecture of Kolmogorov (which was motivated by some previous explicit calculations of Vitushkin, \cite{VitushkinEntr}, cf. \cite[p. 134]{KolmSelWorksIII}), a version of Theorem \ref{thm_entr} was established by Babenko \cite{Babenko} and Erokhin \cite{Erokhin} for $X = \comp$.
	Later Zakharyuta-Skiba \cite{ZakharyutaSkiba} extended it for arbitrary Riemann surfaces $X$.
	For $n \geq 2$, the identity (\ref{thm_hyperconvex_vthm}) was established by Zakharyuta \cite{ZakharyutaPotential}, conditional on a certain pluripotential-theoretic conjecture, which was later established by Nivoche \cite{NivocheConj} for $X = \comp^n$.
	Bandtlow-Nivoche \cite{BandNivoc} then gave a different proof of (\ref{thm_hyperconvex_vthm}), still in the setting $X = \comp^n$.
	\par 
	Hence, (\ref{thm_hyperconvex_vthm}) both extends \cite{ZakharyutaPotential, NivocheConj, BandNivoc} to domains in arbitrary Stein manifolds and gives a new, independent proof for domains in $\comp^n$.
	\par 
	Note that already for $n = 1$ there exist domains $\Omega$ that embed into an open Riemann surface (which is Stein) but not into $\comp$ (one can show that the intersection pairing on the first cohomology group $H^1(\Omega)$ of $\Omega$ vanishes for domains $\Omega$ in $\comp$).
	The previous approaches to (\ref{thm_hyperconvex_vthm}) do not appear to generalize readily to domains in arbitrary Stein manifolds. 
	The argument of \cite{BandNivoc}, for example, uses the explicit Bergman-Weil formula in $\comp^n$. 
	This formula can be generalized to arbitrary Stein manifolds, cf. \cite[\S 4]{HenkinLeiterer}, but in a substantially different form: the difference of points is not well defined on a general manifold, so the analysis of \cite[Lemma 3.5]{BandNivoc} would have to be reworked.
	\par 
	Furthermore, while we deduce Theorem \ref{thm_entr} from (\ref{thm_hyperconvex_vthm}), the proof of Theorem \ref{thm_entr} requires the extension of (\ref{thm_hyperconvex_vthm}) to arbitrary Stein manifolds $X$ even when $\Omega$ is a domain in $\comp^n$.
	This is because (\ref{thm_hyperconvex_vthm}) is applied to hyperconvex exhaustions of $\widehat{\Omega}$, and by Stout \cite{StoutDomain} there exist domains $\Omega$ in $\comp^n$ for which $\widehat{\Omega}$ is not biholomorphic to any domain in $\comp^n$.
	\par 
	Moreover, even when $\Omega$ is embedded in $\comp^n$, our argument for (\ref{thm_hyperconvex_vthm}) proceeds by reduction to domains in affine manifolds, which need not embed in $\comp^n$.
	\par 
	Another novelty of the current work is that, through Theorem \ref{thm_rel_cap}, it gives a conceptual explanation for the appearance of relative capacity in the $\varepsilon$-entropy estimates, whereas previously this appearance rested on explicit computation: \cite[\S 5, \S 8]{ZakharyutaSurvey} reduced the general case to domains associated with pluricomplex Green functions, and \cite[\S 3.1]{BandNivoc} reduced it to special analytic polyhedra, the relative capacity being computed explicitly in each case.
	\par 
	To explain our approach to (\ref{thm_hyperconvex_vthm}), recall that a domain $\Omega$ in $X$ is called a \textit{Runge domain} if it is Stein and $H^0(X)$ is dense in $H^0(\Omega)$ (with respect to the uniform topology on compact subsets of $\Omega$).
	A strictly hyperconvex domain $\Omega$ in $X$ is \textit{strictly Runge} if in the notation introduced before (\ref{eq_sublevel}), the domain $\Omega'$ is Runge in $X$ (every strictly Runge domain is Runge; see Remark \ref{eq_runge_dom_emb}).
	\par
	The main idea of our proof of (\ref{thm_hyperconvex_vthm}) is to reduce the estimation of the $\varepsilon$-entropy of $A_K^{\Omega}$ to its finite-dimensional algebraic approximations.
	This reduction proceeds in several steps. 
	First, relying on some well-known results on the geometry of strictly hyperconvex domains together with a theorem of Stout \cite{StoutStein}, cf. Demailly-Lempert-Shiffman \cite{DemLempShiff}, we show that it suffices to treat the case where $X$ is an affine manifold in $\comp^m$ for some $m \in \nat$, and $\Omega$ is a relatively compact strictly Runge domain in $X$.
	Roughly, it means that it suffices to estimate the $\varepsilon$-entropy of a space analogous to $A_K^{\Omega}$ but with $H^0(X)$ in its definition instead of $H^0(\Omega)$.
	\par 
	The second step consists in replacing the space $H^0(X)$ by its finite-dimensional approximations $A_k[X]$, $k \in \nat$, where $A_k[X]$ is the restriction to $X$ of the space of polynomials of total degree $\leq k$ on $\comp^m$.
	For this, we rely on Siciak's \cite{SiciakEnvFirst} result on optimal polynomial approximations of holomorphic functions, extended to the setting of affine varieties by Zeriahi \cite{ZeriahiSic}.
	\par 
	The calculation of the $\varepsilon$-entropy from (\ref{thm_hyperconvex_vthm}) then reduces to the calculation of the $\varepsilon$-entropy on the spaces $A_k[X]$ associated with two $\sup$-norms: one on $K$ and the other on $\Omega$.
	We show that this eventually boils down to the comparison of the volumes of the unit balls of certain $\sup$-norms built from the two above, which can be carried out using Berman-Boucksom \cite{BermanBouckBalls}.
	\par 
	The application of the latter result, however, does not feature in a natural way the relative capacity $C(K, \Omega)$. 
	The last step of our proof is, hence, to find an interpretation of the latter in terms of the so-called relative Monge-Ampère energy which appears in the comparison of the volumes of the unit balls.
	This step might be of independent interest, so we explain it below.
	\par 
	For this, recall that for an affine manifold $X \subset \comp^m$, the \emph{Lelong class} of psh functions of logarithmic growth on $X$ is defined as	
	\begin{equation}
		 \mathscr{L}(X) := \Big \{ u \in \psh(X) : \sup_{x \in X} \big(u(x) - \log(\| x \| + 1) \big) < +\infty \Big \},
	\end{equation}
	where $\|\cdot\|$ is the Euclidean norm on $\comp^m$ restricted to $X$.
	\par 
	The \emph{Siciak-Zahariuta extremal function} of the closure $\overline{\Omega} \subset X$ of $\Omega$ is defined for $z \in X$ as
	\begin{equation}\label{eq_extr}
		V_{\Omega}(z) := \sup\big\{ u(z) : u \in \mathscr{L}(X),\ u|_{\overline{\Omega}} \leq 0 \,\big\}.
	\end{equation}
	For any $c > 0$, $z \in X$, we also define the following relative extremal function: for $z \in X$, we let
	\begin{equation}\label{eq_rel_extr}
		V_{c K, \Omega}(z) := \sup\big\{ u(z) : u \in \mathscr{L}(X),\ u|_{K} \leq - c, u|_{\overline{\Omega}} \leq 0 \,\big\}.
	\end{equation}
	Define the relative Monge-Ampère energy as
	\begin{equation}\label{eq_energy}
		E(V_{\Omega}) - E(V_{c K, \Omega})
		=
		\frac{2}{(n + 1)!}
		\sum_{i = 0}^{n} \int_X \big( V_{\Omega}^* - V_{c K, \Omega}^* \big)
		 \cdot (dd^c V_{\Omega}^*)^i \wedge (dd^c V_{c K, \Omega}^*)^{n - i},
	\end{equation}
	where $(\cdot)^*$ denotes the upper semicontinuous regularization and the wedge products are interpreted in the Bedford-Taylor sense \cite{BedfordTaylor}.
	Note that, while the relative capacity $C(K, \Omega)$ does not depend on the ambient manifold $X$, both potentials $V_{\Omega}$ and $V_{c K, \Omega}$ depend on it crucially; in fact, they depend even on the embedding $X \subset \comp^m$, just as the Lelong class does.
	Our next result shows that, under natural assumptions, the dependence of the energy on the outer geometry disappears, as $c \to 0$.
	\begin{thm}\label{thm_rel_cap}
		For any strictly hyperconvex strictly Runge domain $\Omega \Subset X$ in an affine manifold $X \subset \comp^m$ of dimension $n$, and any non-pluripolar compact subset $K$ of $\Omega$, the following relation between the Monge-Ampère energy and the relative capacity holds
		\begin{equation}\label{eq_thm_rel_cap}
			\lim_{c \to 0} \frac{ E(V_{\Omega}) - E(V_{c K, \Omega})}{c^{n + 1}}
			=
			\frac{2 C(K, \Omega)}{(n + 1)!}.
		\end{equation}
	\end{thm}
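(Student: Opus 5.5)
Our plan is to show that, for $c$ small, $V_{cK,\Omega}$ coincides near $\overline{\Omega}$ with $c$ times the relative extremal function of $K$ in $\Omega$. Once this is in place, the energy difference collapses to an integral over $K$ that equals $C(K,\Omega)$. Let
\[
u_{K,\Omega}(z) := \sup\{ u(z) : u \in \psh(\Omega),\ u \leq 0,\ u|_K \leq -1\}
\]
be the relative extremal function. Since $\Omega$ is hyperconvex and $K$ is non-pluripolar, Bedford--Taylor gives $C(K,\Omega) = \int_K (dd^c u_{K,\Omega}^*)^n = \int_\Omega (dd^c u_{K,\Omega}^*)^n$. The Monge--Ampère measure lives on $K$, where $u_{K,\Omega}^* = -1$ quasi-everywhere, and $u^*_{K,\Omega}\to 0$ at $\partial\Omega$.

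The first step is to compare $V_{cK,\Omega}^*$ with $c\,u_{K,\Omega}^*$ on $\Omega$. The upper bound $V_{cK,\Omega}^* \leq c\, u_{K,\Omega}^*$ on $\Omega$ is immediate, since any competitor restricted to $\Omega$ is a competitor for $c\,u_{K,\Omega}$. For the lower bound, strict hyperconvexity is used: $u_{K,\Omega}$ extends to a negative psh function on a neighbourhood $\Omega_r$, $r>0$, comparable to $\rho$, say $u_{K,\Omega}\geq A(\rho-0)$ near $\partial\Omega$. The Runge property of $\Omega'$ and the strict Runge hypothesis should guarantee that $V_\Omega$ grows at least linearly off $\overline{\Omega}$ near $\partial\Omega$, i.e.\ $V_\Omega \geq \delta\,\rho$ on $\Omega_r\setminus\Omega$. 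This uses that $V_\Omega$ is continuous and positive outside $\overline\Omega$ (because $\overline{\Omega}$ is polynomially convex and regular), plus a standard gluing estimate. Then, for $c$ small, the function
\[
w := \begin{cases} \max\big(c\,\tilde u_{K,\Omega},\ V_\Omega - c\,M\big)\ \ \text{or similar} & \text{on } \Omega_r,\\ V_\Omega - \text{(small)} & \text{outside,}\end{cases}
\]
gives a competitor in $\mathscr L(X)$ after standard modifications. Here $\tilde u$ is the extension, and the gluing is valid because $c\tilde u > V_\Omega - \eta$ near $\partial\Omega_r$ once $c\ll \eta$. It equals $c\,u_{K,\Omega}^*$ on a neighbourhood of $K$, and is within $o(c)$ of it on $\Omega$. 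More cleanly, I would prove $V^*_{cK,\Omega} = c\,u^*_{K,\Omega}$ on $\{c\,u^*_{K,\Omega} \leq -\eta(c)\}$ with $\eta(c)=o(c)$, and $V_{cK,\Omega}^* \geq V_\Omega - o(c)$ elsewhere. Obtaining this localization with only $o(c)$ errors is the main obstacle. I would first try to establish exact equality $V_{cK,\Omega}^*=\max(c\,u^*_{K,\Omega_r}\text{-type function}, V_\Omega)$, using the comparison principle on $\Omega_r$ with $r \sim c$.

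Next, we compute the energy. By the cocycle/integration-by-parts property of Bedford--Taylor energy (both potentials have full mass and coincide with $V_\Omega$ outside a compact set, since $V_{cK,\Omega}^*=V_\Omega$ wherever $V_\Omega \geq$ something positive), we get $\frac{d}{dt}E(V_\Omega + t(V_{cK,\Omega}^*-V_\Omega)) = \frac{2}{n!}\int (V^*_{cK,\Omega}-V_\Omega)\,(dd^c \cdot)^n$ (with the paper's normalization). Alternatively, we expand the sum in (\ref{eq_energy}). Each term with $i\ge1$ involves $dd^cV_\Omega^*$, which is supported on $\partial\Omega$ (more precisely on the Shilov boundary of $\overline\Omega$), where the difference $V_\Omega^*-V^*_{cK,\Omega}$ is $o(c)$. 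Hence these terms contribute $o(c^{n+1})$; the mixed terms need the error bound $o(c)$ on the difference together with total mass bounds of $(dd^c V_{cK,\Omega}^*)^{n-i}$ near $\partial\Omega$, which are $O(1)\cdot c^{n-i}$ by the comparison above. The $i=0$ term equals $\int (V_\Omega^*-V_{cK,\Omega}^*)(dd^cV_{cK,\Omega}^*)^n$. Its mass on $K$ is $c^n C(K,\Omega)$, where the difference is $0-(-c)=c$, giving $c^{n+1}C(K,\Omega)$. The remaining mass sits near $\partial\Omega$, where the difference is $o(c)$ and the total mass is $O(1)$.

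Finally, we divide by $c^{n+1}$ and let $c\to0$. The estimate on the boundary terms needs the mass of $(dd^c V^*_{cK,\Omega})^{n}$ near $\partial \Omega$ times $o(c)$ to be $o(c^{n+1})$. This is not automatic: the total mass is $1$ (it is normalized in the Lelong class), so we need the sharper statement that the difference is $O(c^{n+1})$-small in an integrated sense. I would handle this by writing $V_\Omega - V_{cK,\Omega}^* \le c(1+u^*_{K,\Omega})^{-}$-type bounds and using the monotone convergence of Bedford--Taylor energies along $c \to 0$, i.e.\ exploiting convexity and homogeneity of $c\mapsto E(V_\Omega)-E(V_{cK,\Omega})$ as the key technical device.
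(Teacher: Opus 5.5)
There is a genuine gap: your argument never determines $V_{cK,\Omega}$ \emph{exactly} near and outside $\partial\Omega$, and the $c^{n+1}$ asymptotics tolerates no error there.

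\textbf{Why approximate control is not enough.} The measure $(dd^cV^*_{cK,\Omega})^n$ has a total mass independent of $c$, of which only $c^nC(K,\Omega)$ sits on $K$. The rest lives on or near $\partial\Omega$. So a discrepancy $V_\Omega-V^*_{cK,\Omega}=o(c)$ there costs $o(c)$, not $o(c^{n+1})$; this is all your localization with $\eta(c)=o(c)$ aims at. The same holds for the terms $i\ge1$. You notice this, but the proposed remedy is not an argument. That $c\mapsto E(V_\Omega)-E(V_{cK,\Omega})$ scales like $c^{n+1}$ is precisely what is to be proved, so invoking its ``homogeneity'' is circular, and convexity yields no rate.

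\textbf{The missing ingredient.} What you need is the exact identity of Proposition \ref{prop_v_ck_om_coin}: for small $c$, $V_{cK,\Omega}=V_\Omega$ on all of $X\setminus\Omega$, and $V_{cK,\Omega}=0$ on $\partial\Omega$. The paper normalizes $\rho\le-1$ on $K$ and uses the competitor equal to $c\rho$ on $\Omega$, to $\max\{v-\epsilon,c\rho\}$ on $\Omega_{1/2}\setminus\Omega$, and to $v-\epsilon$ outside $\Omega_{1/2}$. Here $v\in\mathscr{L}(X)$ satisfies $v\le0$ near $\overline\Omega$ and $v\ge\delta$ near $\partial\Omega_{1/2}$; this is Lemma \ref{lem_extr_fun_lem2}, built from Zeriahi's regular neighbourhoods and the Runge hypothesis, and it also replaces your unproved claim that $\overline\Omega$ is regular. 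Because this competitor is defined piecewise and equals $c\rho$ alone on $\Omega$, the shift $\epsilon$ is decoupled from $c$ and can be sent to $0$. In your global maximum $\max\{c\tilde u,V_\Omega-cM\}$, the shift must be at least $c$, since otherwise the constraint $\le -c$ on $K$ fails. That is exactly why you only get $O(c)$ control off $\Omega$. Once the exterior identity is known, the terms $i\ge1$ vanish identically: $V^*_\Omega=0$ on the open set $\Omega$, so these measures live on $X\setminus\Omega$, where the integrand is zero off a pluripolar set. The $i=0$ term then equals $-\frac{2}{(n+1)!}\int_\Omega V^*_{cK,\Omega}(dd^cV^*_{cK,\Omega})^n$ with no error.

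\textbf{Two smaller points in your first step.} First, no psh extension of $u_{K,\Omega}$ to $\Omega_r$ can stay $\le 0$. Indeed $u_{K,\Omega}\to0$ at $\partial\Omega$, which lies inside $\Omega_r$, so the extension would attain its maximum at an interior point. The correct extension of a competitor $v_0$ is $\max\{v_0,A\rho\}$ on $\Omega$ and $A\rho$ on $\Omega_r\setminus\Omega$, with $A=1/\min_K(-\rho)$ so that $u_{K,\Omega}\ge A\rho$; it is nonnegative off $\Omega$. Second, near $\partial\Omega_r$ it is the outer function that must dominate $c\tilde u$ for the gluing to work, the opposite of what you wrote.

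With these corrections your interior claim is in fact true, and sharper than what the paper uses. Gluing $c\max\{v_0,A\rho\}$ (alone on $\Omega$) with $v-\epsilon$ as above gives $V_{cK,\Omega}=c\,u_{K,\Omega}$ on $\Omega$ for all small $c$. The paper only shows $u_c\to u_{K,\Omega}$, via $u_{K,\Omega_r}$, Lemma \ref{eq_rel_extr_f} and Bedford--Taylor monotone continuity. Together with the exterior identity, your interior identity would even make the quotient in (\ref{eq_thm_rel_cap}) equal to $2C(K,\Omega)/(n+1)!$ for every small $c$. But without the exact exterior identity, your proof does not close.
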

	\begin{rem}
		It is interesting to compare Theorem \ref{thm_rel_cap} with the differentiability of the relative Monge-Ampère energy, see \cite{BermanBouckBalls}.
		The latter concerns the limit analogous to (\ref{eq_thm_rel_cap}) with $c^{n+1}$ replaced by $c$, and it does not appear to support the variation of the metric necessary to (\ref{eq_thm_rel_cap}).
	\end{rem}
	\par 
	This article is organized as follows. In Section \ref{sect_met_fd}, we establish estimates on the $\varepsilon$-entropy of finite-dimensional vector spaces. 
	In Sections \ref{sect_geom_hyp} and \ref{sect_poly}, we reduce the proof of (\ref{thm_hyperconvex_vthm}) to estimating the $\varepsilon$-entropy of certain spaces of algebraic functions, and in Section \ref{sect_balls}, we express the latter entropies in terms of the relative Monge-Amp\`ere energy. 
	Section \ref{sect_rel_cap} relates the relative Monge-Amp\`ere energy to the relative capacity and contains the proof of Theorem \ref{thm_rel_cap}. 
	Theorem \ref{thm_entr} is then proved in Section \ref{sect_est_entr}. 
	Finally, in Section \ref{sect_runge}, we discuss, as an application, a quantitative 
version of the Runge property.
	\par 
	\textbf{Notation.}
	For $f \in H^0(X)$ where $X$ is a complex manifold and $E \subset X$ is an arbitrary subset, we denote by $\| f \|_E$ the sup-norm of $f$ on $E$ (it might be equal to $+\infty$).
	\par 
	In a metric space $(M, d)$, we say that a set of points forms an $\varepsilon$-cover of $A \subset M$ if the union of $\varepsilon$-balls around the points covers $A$.
	\par 
	We denote by $d = \partial + \dbar$ the usual decomposition of the exterior derivative in terms of its $(1, 0)$ and $(0, 1)$ parts, and we set 
	\begin{equation}\label{eq_defn_dc}
		d^c := \frac{\partial - \dbar}{2 \pi \imun}.
	\end{equation}
	\par 
	On a finite dimensional vector space $V$, denote by ${\rm{vol}}$ the volume of Borel subsets of $V$ with respect to some Hermitian structure on $V$.
	Note that while each such volume individually depends on the choice of the Hermitian structure, their ratio does not.
	\par
	\textbf{Acknowledgements.}
	I would like to thank St\'ephanie Nivoche for her interest and for her comments on an earlier version of the manuscript.
	This work was supported by the CNRS, École Polytechnique, and in part by the ANR projects QCM (ANR-23-CE40-0021-01), AdAnAr (ANR-24-CE40-6184), STENTOR (ANR-24-CE40-5905-01) and CanQuantFilt (ANR-25-ERCS-0009).
	
	\section{Metric entropy in finite dimensional vector spaces}\label{sect_met_fd}
	The main goal of this section is to provide estimates on the metric entropy for finite dimensional vector spaces.
	For a vector space $V$ of dimension $v$, we fix two (not necessarily Hermitian) norms $N_i = \| \cdot \|_i$, $i = 0, 1$ on $V$.
	For a given $\varepsilon > 0$, we denote by $\mathbb{B}_{\varepsilon} \subset V$ the unit ball of the norm $\max \{ N_0, \varepsilon^{-1} \cdot N_1 \}$, and by $\mathbb{B}_{0}$ the unit ball of the norm $N_0$.
	More precisely, we prove the following result on the relation between the metric entropy, $H_{\varepsilon}(\mathbb{B}_0, N_1)$, of $\mathbb{B}_0$ in $(V, N_1)$ and the ratio of volumes of the respective unit balls.
	\begin{prop}\label{prop_entr_balls}
		For any $\varepsilon > 0$, the following estimate holds
		\begin{equation}
			\Big|
			H_{\varepsilon}(\mathbb{B}_0, N_1)
			-
			\log \Big( \frac{{\rm{vol}}(\mathbb{B}_{0})}{{\rm{vol}}(\mathbb{B}_{\varepsilon})} \Big)
			\Big|
			\leq
			100 v (\log(v) + 1).
		\end{equation}
	\end{prop}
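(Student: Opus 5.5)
The plan is a pair of standard volumetric (packing/covering) arguments comparing $\mathbb{B}_0$ with translates of $\mathbb{B}_{\varepsilon}$. Both directions will in fact give an error of order $v$, without the $\log(v)$ factor, so the stated constant is generous. Write $M_\varepsilon := \max\{N_0, \varepsilon^{-1} N_1\}$, whose unit ball is $\mathbb{B}_\varepsilon$. Let $d$ denote the real dimension of $V$, so $d = 2v$ if $V$ is complex and $d = v$ if real. Volumes scale as ${\rm{vol}}(\lambda B) = \lambda^{d} {\rm{vol}}(B)$ for $\lambda > 0$. Since only ratios of volumes appear, the choice of Hermitian structure is irrelevant.

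\emph{Upper bound.} Choose a maximal subset $\{x_1, \ldots, x_N\} \subset \mathbb{B}_0$ that is $1$-separated for $M_\varepsilon$, meaning $M_\varepsilon(x_i - x_j) > 1$ for $i \neq j$; this set is finite by compactness.
\begin{itemize}
\item By maximality, every $x \in \mathbb{B}_0$ satisfies $M_\varepsilon(x - x_i) \leq 1$ for some $i$. In particular $N_1(x - x_i) \leq \varepsilon$, so the $x_i$ form an $\varepsilon$-cover of $\mathbb{B}_0$ in $(V, N_1)$.
\item By separation and the triangle inequality, the sets $x_i + \tfrac12 \mathbb{B}_\varepsilon$ are pairwise disjoint.
\item Since $\mathbb{B}_\varepsilon \subset \mathbb{B}_0$, each of these sets lies in $\tfrac32 \mathbb{B}_0$.
\end{itemize}
Comparing volumes gives $N \, 2^{-d} {\rm{vol}}(\mathbb{B}_\varepsilon) \leq (3/2)^{d} {\rm{vol}}(\mathbb{B}_0)$. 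Hence
\[
H_\varepsilon(\mathbb{B}_0, N_1) \leq \log\Big(\frac{{\rm{vol}}(\mathbb{B}_0)}{{\rm{vol}}(\mathbb{B}_\varepsilon)}\Big) + d \log 3 .
\]

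\emph{Lower bound.} Let $y_1, \ldots, y_N$ be centers of an $\varepsilon$-cover of $\mathbb{B}_0$ in $(V, N_1)$ with $N = N_\varepsilon(\mathbb{B}_0, N_1)$. We may discard every $i$ for which $B_i := \mathbb{B}_0 \cap \{ y : N_1(y - y_i) \leq \varepsilon \}$ is empty, and pick $z_i \in B_i$ for the rest. For any $y \in B_i$ we have $N_0(y - z_i) \leq 2$ and $N_1(y - z_i) \leq 2\varepsilon$, so $B_i \subset z_i + 2\mathbb{B}_\varepsilon$. Since the $B_i$ cover $\mathbb{B}_0$, this yields ${\rm{vol}}(\mathbb{B}_0) \leq N\, 2^{d} {\rm{vol}}(\mathbb{B}_\varepsilon)$. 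Hence
\[
H_\varepsilon(\mathbb{B}_0, N_1) \geq \log\Big(\frac{{\rm{vol}}(\mathbb{B}_0)}{{\rm{vol}}(\mathbb{B}_\varepsilon)}\Big) - d \log 2 .
\]

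Combining the two bounds gives an error of at most $2v \log 3 \leq 100 v(\log(v) + 1)$. There is no real obstacle here. The only points needing care are the bookkeeping between complex and real dimension and the choice of $M_\varepsilon$-separation, rather than $N_1$-separation, in the packing step. The latter is what produces translates of $\mathbb{B}_\varepsilon$ instead of $N_1$-balls, whose volumes would be infinite or incomparable.
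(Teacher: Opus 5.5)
Your proof is correct, and it takes a more direct route than the paper's.

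\textbf{The paper's route.} The paper first treats Hermitian norms (Proposition \ref{prop_entr}). It simultaneously diagonalizes $H_0, H_1$, projects onto the span of the $e_i$ with $\lambda_i \leq \log(\varepsilon^{-1})$, and applies Lemma \ref{lem_cov} to ellipsoids. This gives the explicit spectral formula $2\sum_i (\log(\varepsilon^{-1}) - \lambda_i)_+$. It then passes to the rooftop norm $H_0 \vee \varepsilon^{-1} H_1$ via (\ref{eq_max_vee_comp}). Finally, it reduces general norms to Hermitian ones using the John ellipsoid theorem (\ref{eq_john_ellips}) and Remark \ref{rem_entr_change}. The $\log(v)$ in the stated constant comes entirely from the $\sqrt{2v}$ distortion in this last step.

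\textbf{Your route.} You work directly with the identity $\mathbb{B}_\varepsilon = \mathbb{B}_0 \cap \{N_1 \leq \varepsilon\}$ and the classical fact that, for symmetric convex bodies, $N(K,T)$ is comparable to $\mathrm{vol}(K)/\mathrm{vol}(K \cap T)$ up to a factor $c^{d}$. Packing by translates of $\frac12 \mathbb{B}_\varepsilon$ gives the upper bound. The inclusion $(y+T) \cap K \subset z + 2(K \cap T)$ gives the lower bound.

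\textbf{Comparison.} Your argument works verbatim for arbitrary norms. It needs no diagonalization and no John ellipsoid. It also needs no projection step: directions in which $N_1$-balls are large compared to $\mathbb{B}_0$ are absorbed automatically by intersecting with $\mathbb{B}_0$. It yields the sharper error $2v \log 3$. What the paper's route buys is the explicit eigenvalue expression of Proposition \ref{prop_entr} in the Hermitian case, which is natural from the rooftop viewpoint. Proposition \ref{prop_entr_balls} is only used in (\ref{eq_entr_appr_1}), where any error of order $o(k^{n+1})$ suffices, so your version would serve equally well and would shorten Section \ref{sect_met_fd}.

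One minor remark: if covering balls are taken open rather than closed, run your packing with some $\varepsilon' < \varepsilon$ and use $\mathbb{B}_{\varepsilon'} \supset (\varepsilon'/\varepsilon)\mathbb{B}_\varepsilon$. This changes nothing in the estimate.
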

	\par 
	Let us first establish a version of the above result for Hermitian norms $H_i = \| \cdot \|_i$, $i = 0, 1$ on $V$.
	To state it, we denote by $\mathbb{B}^H_0$ the unit ball with respect to $H_0$.
	We fix a basis $e_1, \ldots, e_v$ of $V$ which is orthonormal with respect to $H_1$ and orthogonal with respect to $H_0$.
	Define $\lambda_1, \ldots, \lambda_v \in \real$ so that $\| e_i \|_0 = \exp(\lambda_i)$.
	\begin{prop}\label{prop_entr}
		For any $\varepsilon > 0$, the following estimate holds
		\begin{equation}
			2 \sum_{i = 1}^v
			(\log (\varepsilon^{-1}) - \lambda_i)_+
			\leq
			H_{\varepsilon}(\mathbb{B}^H_0, H_1)
			\leq
			2 \sum_{i = 1}^v
			(\log (\varepsilon^{-1}) - \lambda_i)_+
			+
			2v \log(6),
		\end{equation}
		where $t_+ := \max\{0, t \}$ for any $t \in \real$.
	\end{prop}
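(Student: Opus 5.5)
The plan is to work in the basis $e_1,\dots,e_v$. Writing $x=\sum x_i e_i$, the space $(V,H_1)$ becomes standard $\comp^v\cong\real^{2v}$, and $\mathbb{B}^H_0$ becomes the ellipsoid $\{\sum e^{2\lambda_i}|x_i|^2\le 1\}$, whose complex semi-axes are $a_i:=e^{-\lambda_i}$. Both bounds then reduce to volume comparisons of ellipsoids with common axes. Each complex direction contributes a square factor to real $2v$-dimensional volume, which accounts for the factor $2$.

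\textbf{Lower bound.} Let $W$ be the span of those $e_i$ with $a_i>\varepsilon$, i.e. with $\lambda_i<\log(\varepsilon^{-1})$, and let $P$ be the $H_1$-orthogonal projection onto $W$. Since the $e_i$ are orthogonal for both norms, $P(\mathbb{B}^H_0)=\mathbb{B}^H_0\cap W$. Moreover $P$ is $1$-Lipschitz for $H_1$ and maps $H_1$-balls of radius $\varepsilon$ into balls of radius $\varepsilon$ in $W$. Hence an $\varepsilon$-cover of $\mathbb{B}^H_0$ projects to an $\varepsilon$-cover of $\mathbb{B}^H_0\cap W$ with the same number of points. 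Comparing volumes inside $W$ gives
\begin{equation*}
N_\varepsilon(\mathbb{B}^H_0,H_1)\ \ge\ \frac{{\rm vol}(\mathbb{B}^H_0\cap W)}{{\rm vol}(\varepsilon\,\mathbb{B}^{H_1}\cap W)}=\prod_{a_i>\varepsilon}\Big(\frac{a_i}{\varepsilon}\Big)^2 .
\end{equation*}
Taking logarithms yields exactly $2\sum_i(\log(\varepsilon^{-1})-\lambda_i)_+$, with no error term.

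\textbf{Upper bound.} Take a maximal $\varepsilon$-separated subset $S\subset\mathbb{B}^H_0$ for $H_1$. By maximality, $S$ is an $\varepsilon$-cover. The $H_1$-balls of radius $\varepsilon/2$ around the points of $S$ are pairwise disjoint and lie in the Minkowski sum $\mathbb{B}^H_0+\frac{\varepsilon}{2}\mathbb{B}^{H_1}$. The key elementary step is to bound this sum of two coaxial ellipsoids, with semi-axes $a_i$ and $b_i=\varepsilon/2$. For $x=y+z$, convexity of $t\mapsto|t|^2$ gives
\begin{equation*}
\frac{|y_i+z_i|^2}{(a_i+b_i)^2}\ \le\ \frac{|y_i|^2}{a_i^2}+\frac{|z_i|^2}{b_i^2}.
\end{equation*}
Summing over $i$ shows that $x$ lies in the ellipsoid with semi-axes $\sqrt2\,(a_i+\varepsilon/2)$. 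A volume count then gives
\begin{equation*}
|S|\le\prod_i\Big(\frac{\sqrt2(a_i+\varepsilon/2)}{\varepsilon/2}\Big)^2=\prod_i\Big(\frac{2\sqrt2\,a_i}{\varepsilon}+\sqrt2\Big)^2 .
\end{equation*}
For each factor there are two cases. If $a_i\ge\varepsilon$, the factor inside the square is at most $3\sqrt2\,a_i/\varepsilon\le 6\,a_i/\varepsilon$. If $a_i<\varepsilon$, it is at most $3\sqrt2<6$. In both cases the factor is at most $6\exp\big((\log(\varepsilon^{-1})-\lambda_i)_+\big)$. Taking logarithms gives the claimed upper bound with error $2v\log 6$.

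The only step requiring care is controlling the Minkowski sum of the two ellipsoids. The convexity inequality above handles it, at the cost of the harmless $\sqrt2$, which is absorbed into the constant $6$.
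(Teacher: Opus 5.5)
Your proof is correct, and your lower bound is the paper's: project onto the long axes and compare volumes there. For the upper bound, your maximal-separated-set argument is exactly the proof of the upper bound in Lemma~\ref{lem_cov}. The difference lies in how you handle the short axes $a_i<\varepsilon$.

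The paper first discards them. A cover of $\mathbb{B}^H_0\cap V'$, where $V'=\langle e_i : \lambda_i\le\log(\varepsilon^{-1})\rangle$, extended by zero gives a $\sqrt{2}\varepsilon$-cover of $\mathbb{B}^H_0$. Inside $V'$ the $\varepsilon$-ball of $H_1$ lies in $\mathbb{B}^H_0$, so $\mathbb{B}^H_0+\frac{\varepsilon}{2}\mathbb{B}^H_1\subset\frac{3}{2}\mathbb{B}^H_0$ there, which yields the constant $\log 3$. The $\sqrt{2}$ loss from lifting is then absorbed by replacing $\varepsilon$ with $\varepsilon/\sqrt{2}$ and using $(t+s)_+\le t_++s$ for $s\ge 0$; the paper leaves this step implicit ("it is easy to see").

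You instead keep all directions. You bound the Minkowski sum of the two coaxial ellipsoids in one stroke via the coordinatewise convexity inequality, paying the $\sqrt{2}$ there, and then split long and short axes factor by factor.

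Both routes end with the same effective constant $3\sqrt{2}<6$. Yours is more self-contained and avoids the threshold bookkeeping of the reduction step. The paper's isolates the structural fact that only the directions with $\lambda_i\le\log(\varepsilon^{-1})$ carry entropy. It also gives the sharper error $2v\log 3$ when all semi-axes are at least $\varepsilon$.
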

	In the proof of Proposition \ref{prop_entr}, we use the following well-known result.
	\begin{lem}\label{lem_cov}
		Let $K, T \subset \real^d$, $d \in \nat^*$, be symmetric convex bodies. Then
		\begin{equation}\label{eq_lem_cov}
			\frac{{\rm{vol}}(K)}{{\rm{vol}}(T)} \leq N(K,T) \leq
	 		\frac{{\rm{vol}}(K + \frac{1}{2} T)}{{\rm{vol}}( \frac{1}{2} T )},
		\end{equation}
		where $N(K, T)$ is the least number of translates of $T$ covering $K$, and ${\rm{vol}}$ is the Euclidean volume.
	\end{lem}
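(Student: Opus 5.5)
The lower bound follows from a volume comparison and the upper bound from a volume comparison after a standard doubling trick, both using that $K$ and $T$ are symmetric convex bodies (compact, convex, with nonempty interior).

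\emph{Lower bound.} Suppose $K \subset \bigcup_{j=1}^{N} (x_j + T)$ with $N = N(K,T)$. Subadditivity of Lebesgue measure and translation invariance give
\begin{equation*}
{\rm{vol}}(K) \leq \sum_{j=1}^{N} {\rm{vol}}(x_j + T) = N \, {\rm{vol}}(T).
\end{equation*}
This is the left inequality of (\ref{eq_lem_cov}). Symmetry and convexity are not needed here.

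\emph{Upper bound.} Choose a maximal family $x_1, \ldots, x_M \in K$ whose translates $x_j + \frac{1}{2}T$ are pairwise disjoint. Such a maximal family is finite, and we will see that $M$ is bounded.

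\emph{Covering property.} Let $x \in K$. By maximality, $x + \frac{1}{2}T$ meets some $x_j + \frac{1}{2}T$. Then $x - x_j \in \frac{1}{2}T - \frac{1}{2}T$. By symmetry $-\frac{1}{2}T = \frac{1}{2}T$, and by convexity $\frac{1}{2}T + \frac{1}{2}T = T$. Hence $x \in x_j + T$, so $K \subset \bigcup_j (x_j + T)$ and $N(K,T) \leq M$.

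\emph{Packing bound.} The disjoint sets $x_j + \frac{1}{2}T$ all lie in $K + \frac{1}{2}T$, since each $x_j \in K$. Comparing volumes gives
\begin{equation*}
M \, {\rm{vol}}\big(\tfrac{1}{2}T\big) \leq {\rm{vol}}\big(K + \tfrac{1}{2}T\big).
\end{equation*}
This shows that any disjoint family is bounded in size, so a maximal one exists and is finite.

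Combining the covering property with the packing bound yields
\begin{equation*}
N(K,T) \leq M \leq \frac{{\rm{vol}}(K + \frac{1}{2} T)}{{\rm{vol}}( \frac{1}{2} T )},
\end{equation*}
which is the right inequality of (\ref{eq_lem_cov}).

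The only point needing care is the identity $\frac{1}{2}T - \frac{1}{2}T = T$. It uses both symmetry, which turns the difference into a sum, and convexity, which collapses $\frac{1}{2}T + \frac{1}{2}T$ to $T$. There is no substantive obstacle beyond this.
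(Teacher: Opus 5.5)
Your proposal is correct and follows essentially the same argument as the paper: the lower bound comes from comparing the volume of $K$ with that of the covering, and the upper bound from a maximal disjoint family of translates $x_j + \frac{1}{2}T$, which covers $K$ by symmetry and convexity and is bounded in size by the volume of $K + \frac{1}{2}T$. Your extra remark that the packing bound also guarantees a finite maximal family exists is a small detail the paper leaves implicit.
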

	\begin{proof}
		The lower bound follows from the fact that any cover has total measure at least ${\rm{vol}}(K)$. 
		For the upper bound, choose a maximal family $x_1, \ldots, x_N \in K$, for which the sets $x_i +  \frac{1}{2} T$ are pairwise disjoint. 
		Since $T$ is symmetric and convex, it is immediate to see that $x_1 + T, \ldots, x_N + T$ cover $K$, and so $N(K, T) \leq N$. 
		The upper bound of (\ref{eq_lem_cov}) then follows from the fact that the disjoint sets $x_i + \frac{1}{2} T$ all lie in $K + \frac{1}{2} T$, so $N \cdot {\rm{vol}}(\frac{1}{2} T) \leq {\rm{vol}}(K + \frac{1}{2} T)$.
	\end{proof}
	\begin{proof}[Proof of Proposition \ref{prop_entr}]
		We order $\lambda_i$, $i = 1, \ldots, v$ in increasing order.
		Clearly, if $\lambda_1 \geq \log (\varepsilon^{-1})$, then $H_{\varepsilon}(\mathbb{B}^H_0, H_1) = 0$, and the inequality holds trivially.
		\par 
		Let $k = 1, \ldots, v$ be the maximal such that $\lambda_k \leq \log (\varepsilon^{-1})$.
		We then consider the orthogonal projection onto $V' := \langle e_1, \ldots, e_k \rangle$.
		We denote by $\| \cdot \|'_i$, $i = 0, 1$ the restriction of $\| \cdot \|_i$ to $V'$.
		As balls in $(V, \| \cdot \|_i)$ project onto balls on $(V', \| \cdot \|'_i)$, any cover of $\mathbb{B}^H_0$ by $\varepsilon$-balls in $\| \cdot \|_1$ will induce a cover of $\mathbb{B}'_0 := \mathbb{B}^H_0 \cap V'$ by $\varepsilon$-balls in $\| \cdot \|'_1$.
		Conversely, extending the centers by zero in the last coordinates, the $\varepsilon$-balls in $\| \cdot \|'_1$ covering $\mathbb{B}'_0$, give a $\sqrt{2} \varepsilon$-covering of $\mathbb{B}^H_0$.
		From this, it is easy to see that it suffices to establish Proposition \ref{prop_entr} when $\lambda_v \leq \log (\varepsilon^{-1})$, with $\log(3)$ in place of $\log(6)$, on which we will concentrate from now on.
		\par 
		Then the lower bound of Proposition \ref{prop_entr} follows immediately from the lower bound of Lemma \ref{lem_cov} along with the trivial fact that ${\rm{vol}}(\mathbb{B}^H_0) = \prod_{i = 1}^{v} \exp(-2 \lambda_i)$, where the volume is the Euclidean volume calculated with respect to $\| \cdot \|_1$.
		The upper bound of Proposition \ref{prop_entr} follows from the upper bound of Lemma \ref{lem_cov} and the fact that $\mathbb{B}^H_0 + \frac{1}{2} \mathbb{B}^H_1 \subset \frac{3}{2} \mathbb{B}^H_0$, where $\mathbb{B}^H_1$ is a unit ball of Hermitian norm with respect to $H_1$.
	\end{proof}
	\begin{rem}\label{rem_entr_change}
		It follows easily from the above estimate that for any Hermitian norms $H_i = \| \cdot \|_i$, $H'_i = \| \cdot \|_i$, $i = 0, 1$, on $V$, verifying $c^{-1} H_i \leq H'_i \leq c H_i$, we have 
		\begin{equation}
			\big|
				H_{\varepsilon}(\mathbb{B}_0, H_1)
				-
				H_{\varepsilon}(\mathbb{B}'_0, H'_1)
			\Big|
			\leq
			20 (\log c + 1) \cdot v.
		\end{equation}
	\end{rem}
	
	We now define the “rooftop" Hermitian norm $H_0 \vee H_1$ as follows: we choose a basis $e_1, \ldots, e_v$ of $V$ which is orthogonal for both $H_0$ and $H_1$, and define $H_0 \vee H_1$ such that $e_1, \ldots, e_v$ is orthogonal for $H_0 \vee H_1$, and normalized such that $\| e_i \|_{H_0 \vee H_1} := \max \{ \| e_i \|_{H_0}, \| e_i \|_{H_1} \}$.
	\par 
	We stress that while we obviously have $H_0 \leq H_0 \vee H_1$ and $H_1 \leq H_0 \vee H_1$, the space of Hermitian norms on $V$ is not a lattice (unless $v = 1$), i.e., there is in general no minimal Hermitian norm which majorizes both $H_0$ and $H_1$.
	\par 
	We denote by $\mathbb{B}^{\vee}_{\varepsilon}$ the unit ball with respect to $H_0 \vee \varepsilon^{-1} \cdot H_1$.
	We immediately see that Proposition \ref{prop_entr} is equivalent to the following statement
	\begin{equation}\label{eq_prop_entr_ref}
		\log \Big( \frac{{\rm{vol}}(\mathbb{B}_{0})}{{\rm{vol}}(\mathbb{B}^{\vee}_{\varepsilon})} \Big)
		\leq
		H_{\varepsilon}(\mathbb{B}^H_0, H_1)
		\leq
		\log \Big( \frac{{\rm{vol}}(\mathbb{B}_{0})}{{\rm{vol}}(\mathbb{B}^{\vee}_{\varepsilon})} \Big)
		+
		2v \log(3).
	\end{equation}
	\par 
	As already observed in \cite[(3.21)]{FinGQBig}, for Hermitian $H_0$, $H_1$, we have 
	\begin{equation}\label{eq_max_vee_comp}
		\frac{1}{\sqrt{2}} H_0 \vee H_1 \leq \max\{H_0, H_1\} \leq H_0 \vee H_1.
	\end{equation}
	\par 
	Recall now that by the John ellipsoid theorem, cf. \cite[p. 27]{PisierBook}, for any norm $N_V$ on $V$, there is a Hermitian norm $N_V^H$ on $V$, verifying 
	\begin{equation}\label{eq_john_ellips}
		N_V^H \leq N_V \leq \sqrt{2v} \cdot N_V^H.
	\end{equation}
	We are now in a position to prove Proposition \ref{prop_entr_balls}.
	\begin{proof}[Proof of Proposition \ref{prop_entr_balls}]
		It follows immediately from Remark \ref{rem_entr_change}, its analogue for volume ratios, and (\ref{eq_john_ellips}) that it suffices to prove that for any Hermitian norms $H_i = \| \cdot \|_i$, $i = 0, 1$ on $V$, in the notation of Proposition \ref{prop_entr_balls}, we have
		\begin{equation}
			\log \Big( \frac{{\rm{vol}}(\mathbb{B}_{0})}{{\rm{vol}}(\mathbb{B}_{\varepsilon})} \Big)
			\leq
			H_{\varepsilon}(\mathbb{B}_0, H_1)
			\leq
			\log \Big( \frac{{\rm{vol}}(\mathbb{B}_{0})}{{\rm{vol}}(\mathbb{B}_{\varepsilon})} \Big)
			+
			2v \log(10).
		\end{equation}
		This, however, follows immediately from (\ref{eq_prop_entr_ref}) and (\ref{eq_max_vee_comp}).
	\end{proof}

	\section{Geometry of strictly hyperconvex domains}\label{sect_geom_hyp}
	The aim of this section is to show that it suffices to establish (\ref{thm_hyperconvex_vthm}) when $X$ is an affine manifold and $\Omega$ is a strictly hyperconvex strictly Runge domain in $X$. 
	This relies on the following result.
	\begin{prop}\label{prop_runge_red}
		Let $\Omega \Subset X$ be a strictly hyperconvex domain in a Stein manifold $X$.
		Then there is a Stein domain $Y \Subset X$ such that $\Omega \Subset Y$ and $\Omega$ is a strictly Runge domain in $Y$.
	\end{prop}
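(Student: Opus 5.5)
The plan is to take $Y$ and the intermediate domain to be sublevel sets of the exhaustion function $\rho$ from the definition of strict hyperconvexity. By assumption there are an open neighborhood $\Omega'$ of $\overline{\Omega}$ in $X$ and a continuous psh exhaustion $\rho \colon \Omega' \to ]-\infty, 1[$ with $\Omega = \Omega_0$, in the notation (\ref{eq_sublevel}). Fix $0 < s < r < 1$ and set
\begin{equation*}
Y := \Omega_r, \qquad \Omega'' := \Omega_s .
\end{equation*}

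\textbf{Step 1: $Y$ is a Stein domain with $\Omega \Subset Y \Subset X$.} Since $\rho$ is an exhaustion of $\Omega'$, the set $\overline{\Omega_r}$ is compact in $\Omega'$, hence compact in $X$, so $Y \Subset X$. Similarly $\overline{\Omega} \subset \{\rho \leq 0\} \subset \Omega_r$, which gives $\Omega \Subset Y$. The function $-\log(r - \rho)$ is a continuous psh exhaustion of $\Omega_r$, so $\Omega_r$ is pseudoconvex. A pseudoconvex open subset of a Stein manifold is Stein, so $Y$ is Stein. If $\Omega_r$ is disconnected, replace it by the connected component containing the connected set $\overline{\Omega}$; all the arguments below are unaffected.

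\textbf{Step 2: $\Omega$ is strictly hyperconvex in $Y$ with respect to $\Omega''$.} On $\Omega'' = \Omega_s$, which is a neighborhood of $\overline{\Omega}$ in $Y$, put $\rho'' := \rho / s$. This is a continuous psh function with values in $]-\infty, 1[$. Its sublevel sets are $\{\rho'' < t\} = \Omega_{st}$ for $t < 1$, and these are relatively compact in $\Omega_s$, so $\rho''$ is an exhaustion. Moreover $\{\rho'' < 0\} = \Omega$. The same argument as in Step 1 shows that $\Omega''$ is Stein.

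\textbf{Step 3: $\Omega''$ is Runge in $Y$.} This is the main point. I would use the classical criterion: a Stein open subset $\Omega''$ of a Stein manifold $Y$ is Runge in $Y$ as soon as, for every compact $L \subset \Omega''$, the $H^0(Y)$-convex hull $\widehat{L}_Y$ is contained in $\Omega''$ (cf. \cite[Theorem 5.2.8]{Hormander} and its manifold version). To verify this, I would invoke the fact that on a Stein manifold the holomorphically convex hull coincides with the hull with respect to $\psh(Y)$ \cite[Theorem 5.1.7]{Hormander}. Note that $\rho|_Y \in \psh(Y)$. Hence, for a compact $L \subset \Omega_s$ with $m := \max_L \rho < s$,
\begin{equation*}
\widehat{L}_Y \subset \{ x \in Y : \rho(x) \leq m \},
\end{equation*}
and the right-hand side is a compact subset of $\Omega_s$.

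\textbf{Expected obstacle.} The delicate step is the use of the equality of the psh hull and the holomorphic hull for a function $\rho$ that is only continuous psh rather than smooth strictly psh. If the cited result is stated only for smooth or strictly psh functions, I would first regularize. I would replace $\rho$ on a neighborhood of $\overline{\Omega_{s'}}$, for some $s < s' < r$, by a smooth strictly psh function $\tilde\rho$ uniformly close to $\rho$; such a $\tilde\rho$ can be obtained by Richberg-type approximation after adding a small strictly psh exhaustion of $Y$. I would then run the same hull argument with $\tilde\rho$, choosing the approximation error smaller than $(s - m)/2$. With Steps 1 to 3 in place, $\Omega$ is a strictly Runge domain in $Y$, as claimed.
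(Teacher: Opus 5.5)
Your proposal is correct and follows essentially the paper's proof. The paper takes $Y=\Omega_{3/4}$ and the intermediate domain $\Omega_{1/2}$, and obtains Steinness from the continuous strictly psh exhaustion $\chi\circ\rho+\psi$ via Narasimhan's criterion, which amounts to your $-\log(r-\rho)$ argument; it then proves the Runge property exactly by your hull bound $\sup_{\widehat{L}_Y}\rho=\sup_L\rho$. The obstacle you anticipate does not arise, since the equality of holomorphic and psh hulls on a Stein manifold holds for the full cone $\psh(Y)$, so no Richberg regularization is needed; your explicit rescaling $\rho/s$ and your treatment of connected components fill in details the paper leaves implicit.
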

	To prove Proposition \ref{prop_runge_red}, we first recall that for any subset $L$ of a complex manifold $Y$, the \textit{$Y$-holomorphic hull} $\widehat{L}_Y$ of $L$ is defined as
	\begin{equation}\label{eq_hull}
		\widehat{L}_Y
		:=
		\Big\{
			x \in Y : \text{ for any } f \in H^0(Y), |f(x)| \leq \sup_{y \in L} |f(y)|
		\Big\}.
	\end{equation}
	Let us recall the following well-known characterization of Runge domains. 
	It appears in \cite[Theorem 4.3.3]{Hormander} for domains of holomorphy $Y \subset \comp^n$ (equivalently, Stein domains in $\comp^n$; cf. \cite[Theorem 4.2.8]{Hormander}), but the proof carries over verbatim to an arbitrary Stein manifold $Y$; see \cite[proof of Theorem 5.2.10]{Hormander} or the second paragraph of \cite{DemLempShiff}.
	\begin{thm}\label{thm_runge}
		For a Stein manifold $Y$ and a Stein domain $\Omega \subset Y$, the following statements are equivalent:
		1) The domain $\Omega \subset Y$ is Runge.
		\par  
		2) For any compact subset $L \subset \Omega$, the subset $\widehat{L}_Y \cap \Omega$ is relatively compact in $\Omega$.
		\par
		3) For any compact subset $L \subset \Omega$, we have $\widehat{L}_Y = \widehat{L}_{\Omega}$.
	\end{thm}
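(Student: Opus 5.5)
The plan is to prove the cycle $1) \Rightarrow 3) \Rightarrow 2) \Rightarrow 1)$, in the manner of \cite[Theorem 4.3.3]{Hormander}, using only two standard facts about Stein manifolds. First, for a compact $L$ in a Stein manifold $Z$, the hull $\widehat{L}_Z$ is compact. Second, every $f$ holomorphic near a compact holomorphically convex $M \subset Z$ can be approximated uniformly on $M$ by elements of $H^0(Z)$. This is the Oka-Weil theorem, which follows from H\"ormander's $L^2$-estimates for $\dbar$ on Stein manifolds.

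\emph{Step $1) \Rightarrow 3)$.} Since $H^0(Y)|_\Omega \subset H^0(\Omega)$, the inclusion $\widehat{L}_\Omega \subset \widehat{L}_Y$ is immediate. For the reverse inclusion we argue by contradiction. Suppose $x \in \widehat{L}_Y \setminus \widehat{L}_\Omega$.
\begin{itemize}
\item If $x \in \Omega$, there is $f \in H^0(\Omega)$ with $|f(x)| > \|f\|_L$. Approximating $f$ by elements of $H^0(Y)$ uniformly on $L \cup \{x\}$ produces $g \in H^0(Y)$ with $|g(x)| > \|g\|_L$, contradicting $x \in \widehat{L}_Y$.
\item If $x \notin \Omega$, we use that $\widehat{L}_\Omega$ is compact, since $\Omega$ is Stein. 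Consider $M := \widehat{L}_\Omega \cup \{x\}$ and the function $h$ equal to $0$ near $\widehat{L}_\Omega$ and to $1$ near $x$; it is holomorphic near $M$.
\end{itemize}
For the second case we must show that $M$ is $Y$-convex. This is the delicate point, and I would bootstrap as follows. Density of $H^0(Y)$ in $H^0(\Omega)$ shows that $\widehat{L}_Y \cap \Omega = \widehat{L}_\Omega$, by the first case. Hence $\widehat{L}_Y \cap \Omega$ is compact, so it is closed in $Y$. The set $\widehat{L}_Y$ is compact, so $\widehat{L}_Y \setminus \Omega$ is also compact, and the two pieces are disjoint. $\widehat{L}_Y$ is holomorphically convex, so by the Oka-Weil theorem applied on $\widehat{L}_Y$ to the function equal to $0$ near $\widehat{L}_\Omega$ and $1$ near $\widehat{L}_Y \setminus \Omega$, we get $g \in H^0(Y)$ with $|g| < 1/3$ on $L$ and $|g| > 2/3$ at $x$. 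This contradicts $x \in \widehat{L}_Y$.

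\emph{Step $3) \Rightarrow 2)$.} This is trivial, since $\widehat{L}_\Omega$ is compact in the Stein manifold $\Omega$.

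\emph{Step $2) \Rightarrow 1)$.} This is the main step. Given $f \in H^0(\Omega)$ and a compact $L \subset \Omega$, replace $L$ by $L' := \widehat{L}_\Omega$. We want to show that $M := \widehat{L'}_Y \cap \Omega$ is compact. By hypothesis $M$ is relatively compact in $\Omega$, so it equals $\widehat{L'}_Y \cap \overline{M}$ with $\overline{M} \subset \Omega$; hence $M$ is closed in $Y$ and therefore compact. Consequently $\widehat{L'}_Y = M \sqcup (\widehat{L'}_Y \setminus \Omega)$ splits into two disjoint compact sets.

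Define $F$ near $\widehat{L'}_Y$ to be $f$ near $M$ and $0$ near the other piece. The set $\widehat{L'}_Y$ is $Y$-holomorphically convex, so the Oka-Weil theorem on the Stein manifold $Y$ approximates $F$, and hence $f$ on $L$, by elements of $H^0(Y)$. This gives density of $H^0(Y)$ in $H^0(\Omega)$.

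The main obstacle is this use of Oka-Weil on a general Stein manifold, together with checking closedness of the pieces, which is where the Steinness of $Y$ is used. I would cite \cite[Theorem 5.2.8]{Hormander} for the approximation on $Y$-convex compacts, exactly as in \cite[proof of Theorem 5.2.10]{Hormander}.
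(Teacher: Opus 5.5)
Your proposal is correct and follows essentially the same route as the paper. The paper gives no proof of its own and defers to H\"ormander's Theorem 4.3.3 and the proof of Theorem 5.2.10; that argument is exactly Oka--Weil approximation on the $Y$-convex compact $\widehat{L}_Y$, after splitting it into the disjoint compact pieces $\widehat{L}_Y \cap \Omega$ and $\widehat{L}_Y \setminus \Omega$. The only blemish is the abandoned detour in the case $x \notin \Omega$: you never need $\widehat{L}_\Omega \cup \{x\}$ to be $Y$-convex, since the argument you actually give applies Oka--Weil to $\widehat{L}_Y$ itself.
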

	\begin{proof}[Proof of Proposition \ref{prop_runge_red}]
		Using the notation $\Omega'$, $\rho$ and $\Omega_r$, $r < 1$, introduced in (\ref{eq_sublevel}), we claim that $Y := \Omega_{3/4}$ satisfies the conclusion of Proposition \ref{prop_runge_red}.
		\par 
		Let us first verify that $Y$ is Stein.
		According to \cite[Theorem II]{NarasimhLevi2}, a manifold is Stein if and only if it carries a continuous strictly psh exhaustion function.
		An immediate verification shows that if $\psi$ is a continuous strictly psh exhaustion function on $X$ (which exists since $X$ is Stein), then $\chi \circ \rho + \psi$ is a continuous strictly psh exhaustion function on $Y$, where $\chi(t) := \frac{1}{3/4 - t}$, $t \in [-\infty, 3/4[$.
		\par 
		As $\rho$ is continuous, we see that $\Omega_{1/2}$ forms a relatively compact subset of $Y$.
		It only remains to establish that $\Omega_{1/2}$ is a Runge subset of $Y$.
		By Theorem \ref{thm_runge}, it suffices to show that for any compact subset $K$ of $\Omega_{1/2}$, the subset $\widehat{K}_Y$ is a compact subset of $\Omega_{1/2}$.
		Recall, however, that in Stein manifolds, holomorphic hulls coincide with psh hulls, cf.  \cite[Theorems 4.3.4, 5.1.6]{Hormander}, where the latter in the notation (\ref{eq_hull}) are defined as 
		\begin{equation}\label{eq_hull_psh}
			\widehat{K}_Y^P
			:=
			\Big\{
				x \in Y : \text{ for any } p \in \psh(Y), p(x) \leq \sup_{y \in K} p(y)
			\Big\}.
		\end{equation}
		In particular, we see that 
		\begin{equation}
			\sup_{y \in \widehat{K}_Y} \rho(y) = \sup_{y \in K} \rho(y).
		\end{equation}
		As $\sup_{y \in K} \rho(y) < 1/2$ for any compact subset $K$ of $\Omega_{1/2}$, we deduce that $\widehat{K}_Y$ is a compact subset of $\Omega_{1/2}$, finishing the proof.
	\end{proof}
	\begin{rem}\label{eq_runge_dom_emb}
		It follows from the above proof that $\Omega_r$ is Runge in $\Omega_{r'}$ for any $r < r' < 1$.
		Combining this with Proposition \ref{prop_runge_red}, we conclude that a strictly Runge domain is Runge.
	\end{rem}
	\par 
	By combining Proposition \ref{prop_runge_red} with a theorem below, we see that it suffices to establish (\ref{thm_hyperconvex_vthm}) in the setting when $X$ is an affine manifold and $\Omega$ is a strictly Runge domain in $X$.
	\begin{thm}[{Stout \cite{StoutStein}, cf. \cite[Theorem 1.6]{DemLempShiff}}]\label{thm_stout}
		Any relatively compact Runge domain in a Stein manifold is biholomorphic to a relatively compact Runge domain in some affine manifold.
	\end{thm}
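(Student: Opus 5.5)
The statement is cited from Stout and Demailly–Lempert–Shiffman rather than proved in the paper. Here is how I would reconstruct it, in three steps: reduce to a strictly pseudoconvex Runge neighborhood, realize that neighborhood inside an affine manifold, then transfer the Runge property by transitivity. The realization step rests on an algebraic-approximation theorem that I invoke but do not prove.

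\textbf{Step 1: a Runge neighborhood $D$.} Let $\Omega \Subset X$ be Runge, with $X$ Stein. The hull $L := \widehat{\overline{\Omega}}_X$ is compact because $X$ is Stein. Take a smooth strictly psh exhaustion function $\psi$ of $X$, and a regular value $c$ with $\max_L \psi < c$. Set $D := \{\psi < c\}$. By the same argument as in the proof of Proposition \ref{prop_runge_red}, using that holomorphic and psh hulls coincide, $D$ is Stein, relatively compact and Runge in $X$, and has smooth strictly pseudoconvex boundary. Since $\Omega \subset D \subset X$ and $\Omega$ is Runge in $X$, density of $H^0(X)$ in $H^0(\Omega)$ gives density of $H^0(D)$ in $H^0(\Omega)$. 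So $\Omega$ is Runge in $D$, and it suffices to prove the statement for $D$ in place of $X$.

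\textbf{Step 2: realizing $D$ in an affine manifold.} I would embed $X$ properly into some $\comp^N$ (Remmert–Bishop–Narasimhan), so $X$ becomes a closed submanifold. I would then invoke the Nash-algebraic approximation theorem of Lempert, as used in \cite{DemLempShiff}. It should give a nonsingular affine algebraic manifold $A \subset \comp^m$ and a holomorphic embedding $F$ of a neighborhood of $\overline{D}$ into $A$. Moreover, $F$ can be chosen arbitrarily close on $\overline{D}$ to the restriction of a Nash-algebraic map. The heuristic is as follows:
\begin{itemize}
\item Approximate the local defining data of $X$ near the compact set $\overline{D}$ by polynomial, or Nash-algebraic, data.
\item Take the algebraic closure of the resulting germ to get $A$.
\item Use a holomorphic retraction of a tubular neighborhood of $X$ in $\comp^N$ to produce $F$ as a small perturbation of the identity.
\end{itemize}
Since $F$ is close to the original embedding, $F(D)$ is relatively compact in $A$. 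I expect this step to be the main obstacle. In particular, smoothness of $A$ along $F(\overline{D})$ requires a genericity argument, for instance a Bertini-type choice of the approximating polynomials.

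\textbf{Step 3: Runge property and transitivity.} I would need $F(D)$ to be Runge in $A$. By Theorem \ref{thm_runge}, it suffices to show that polynomially convex hulls of compacta in $F(D)$ stay inside $F(D)$. To get this, I would choose the polynomial approximation so that $F(D)$ is cut out in $A$ by $\{\widetilde{\psi} < c\}$, where $\widetilde{\psi}$ is a strictly psh function on $A$ approximating $\psi \circ F^{-1}$. This is possible because $\psi$ can itself be uniformly approximated near $\overline{D}$ by functions of the form $\max_j \log|p_j|$ with $p_j$ holomorphic, hence eventually polynomial. Then the psh-hull argument of Proposition \ref{prop_runge_red} applies on the Stein manifold $A$. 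Finally, $F(\Omega)$ is Runge in $F(D)$ by Step 1, and $F(D)$ is Runge in $A$ by this step. By transitivity of density of restriction maps, $F(\Omega)$ is Runge in $A$, and it is relatively compact since $F(\overline{\Omega})$ is compact.
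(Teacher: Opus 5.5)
The paper does not prove this statement; it imports it from Stout and Demailly--Lempert--Shiffman. Your proposal therefore has to stand on its own. Step 1 and the final transitivity argument are fine, but there is a genuine gap at the core.

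Step 2 is not an auxiliary input. A smooth affine manifold $A$ with a holomorphic embedding of a neighborhood of $\overline{D}$ onto an open subset of $A$ is exactly Stout's theorem with the Runge clause removed. That algebraization is where essentially all the difficulty lies, so as written Step 2 appeals to the result being proved.

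Your heuristic for Step 2 also misidentifies the obstruction. Replacing defining functions of $X$ by nearby polynomials yields a nearby $n$-dimensional manifold only when $X$ is cut out transversally near $\overline{D}$ by exactly $N-n$ functions. That forces the normal bundle of $X$ to be trivial there, which fails in general: for $X=\mathbb{P}^2\setminus C$ with $C$ a smooth conic, $c_1(TX)$ is the nonzero class in $H^2(X,\mathbb{Z})\cong\mathbb{Z}/2$. Perturbing an overdetermined system typically destroys the zero set rather than deforming it.

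Smoothness is also not a Bertini issue. Once you have a deformed piece that is open in its Zariski closure $Z$, you delete a hypersurface $\{q=0\}$ that contains $\mathrm{Sing}(Z)$ and misses the piece. Such a $q$ exists by polynomial convexity and Oka--Weil, and the deletion keeps the result affine. The real input is an Artin-type theorem such as Lempert's: holomorphic solutions of polynomial equations near a polynomially convex compact set can be approximated by Nash-algebraic solutions of the same equations. It must be applied to data that encode $X$ with the correct number of equations.

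Step 3 has a second, repairable gap. You only control $\widetilde{\psi}=\max_j k_j^{-1}\log|p_j|$ near $F(\overline{D})$. (You need the normalizations $k_j^{-1}$; such a function is not strictly psh, but strictness is not needed.) So you cannot arrange $\{\widetilde{\psi}<c\}\cap A=F(D)$. The sublevel set need not match $F(D)$ near $F(\partial D)$, and it may have further components elsewhere on $A$, for instance on parts of $A$ that return close to $F(\overline{D})$ in $\comp^m$. The argument of Proposition \ref{prop_runge_red} needs the whole sublevel set to be the domain in question, which the exhaustion $\rho$ guaranteed there and which you do not have here.

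The repair is as follows.
\begin{enumerate}
\item[1)] Pick $c_1,\delta>0$ with $\max_{\overline{\Omega}}\psi<c_1-\delta$ and $c_1+2\delta<c$.
\item[2)] Arrange $|\widetilde{\psi}\circ F-\psi|<\delta$ on $\{\psi\le c_1+2\delta\}$. This is where you use that $F$ is close to the identity.
\item[3)] Let $W$ be the connected component of $\{\widetilde{\psi}<c_1\}\cap A$ containing $F(\Omega)$. Since $\widetilde{\psi}>c_1$ on $F(\{\psi=c_1+2\delta\})$, connectedness gives $W\subset F(\{\psi<c_1+2\delta\})\subset F(D)$.
\item[4)] For compact $L\subset W$, the definition of the hull alone gives $\widehat{L}_A\subset\{\widetilde{\psi}\le\max_L\widetilde{\psi}\}$. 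This set is disjoint from $\partial W\subset\{\widetilde{\psi}\ge c_1\}$, so $\widehat{L}_A\cap W$ is compact.
\item[5)] $W$ is Stein, because $-\log(c_1-\widetilde{\psi})$ plus a strictly psh function on $A$ is a strictly psh exhaustion of $W$. Theorem \ref{thm_runge} then shows that $W$ is Runge in $A$.
\item[6)] $F(\Omega)$ is Runge in $W$, because it is Runge in $F(D)\supset W$. Your transitivity argument then finishes.
\end{enumerate}
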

	
	\section{Entropy calculation through polynomial approximations}\label{sect_poly}
	We fix a Runge domain $\Omega \Subset X$ in an affine manifold $X \subset \comp^m$ of dimension $n$, and a compact non-pluripolar subset $K$ of $\Omega$.
	Recall that $A_k[X]$ denotes the restriction to $X$ of the space of polynomials of total degree $\leq k$ on $\comp^m$.
	Let us further define
	\begin{equation}
		A_{K, k}^{\Omega}[X] := A_K^{\Omega} \cap A_k[X],
	\end{equation}
	where $A_K^{\Omega}$ was defined in (\ref{eq_ak_omeg}), and we implicitly identified $A_k[X]$ with its image in $\mathscr{C}(K)$.
	\par 
	Clearly, for any $\epsilon > 0$, $k \in \nat$, the following bound holds
	\begin{equation}\label{eq_entr_lower}
		H_{\varepsilon}(A_{K, k}^{\Omega}[X], \mathscr{C}(K))
		\leq
		H_{\varepsilon}(A_K^{\Omega}, \mathscr{C}(K)).
	\end{equation}
	The main goal of this section is to prove a version of the opposite inequality; for this, for an open subset $U \subset \Omega$, we define $A_{K, k}^{U}[X]$ analogously to $A_{K, k}^{\Omega}[X]$.
	\begin{thm}\label{thm_fin_appr}
		For any relatively compact open subset $U \Subset \Omega$ verifying $K \subset U$, there is $C > 0$ such that for any $\varepsilon \in ]0, 1]$ and any $k \in \nat^*$ verifying $k \geq C \cdot \log (\varepsilon^{-1})$, we have
		\begin{equation}
			H_{\varepsilon}(A_K^{\Omega}, \mathscr{C}(K))
			\leq
			H_{\varepsilon/4}(A_{K, k}^{U}[X], \mathscr{C}(K)).
		\end{equation}
	\end{thm}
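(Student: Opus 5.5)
The plan is to approximate each $f \in A_K^{\Omega}$ by a polynomial of degree $\leq k$ that is $\varepsilon/4$-close on $K$ and bounded by $1$ on $U$. An $\varepsilon/4$-cover of $A_{K,k}^U[X]$ then yields an $\varepsilon$-cover of $A_K^{\Omega}$. The quantitative input is the Bernstein--Walsh--Siciak type approximation theorem of Siciak \cite{SiciakEnvFirst}, extended to affine varieties by Zeriahi \cite{ZeriahiSic}. For a compact $L \subset X$ that is polynomially convex and L-regular, and a function $g$ holomorphic and bounded by $M$ on the sublevel set $\{V_L < R\}$, it gives polynomials $p_k \in A_k[X]$ with
\[
\|g - p_k\|_L \leq C_0 M e^{-k R'}
\]
for any $R' < R$.

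First, I fix the compact set $L := \overline{U'}$, where $U \Subset U' \Subset \Omega$ is chosen so that $L$ is $\mathcal{O}(X)$-convex (hence polynomially convex in $X$, because $X$ is affine) and L-regular. Runge-ness of $\Omega$ (Theorem \ref{thm_runge}) lets me take $L$ to be the closure of a sublevel set of a continuous psh exhaustion of $\Omega$. Perturbing slightly to a smooth strictly psh exhaustion makes $L$ regular, and Theorem \ref{thm_runge} shows that its hull stays in $\Omega$. Since $L \Subset \Omega$ and $V_L$ is continuous, there is $R > 0$ with $\{V_L < R\} \Subset \Omega$. Every $\widetilde f$ with $\|\widetilde f\|_\Omega \leq 1$ is therefore holomorphic and bounded by $1$ on that sublevel set. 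This gives $p_k \in A_k[X]$ with
\[
\|\widetilde f - p_k\|_L \leq C_0 e^{-k\delta}
\]
for some fixed $\delta > 0$.

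Next I normalise. I set $q_k := p_k/(1 + C_0 e^{-k\delta})$, so that $\|q_k\|_U \leq \|q_k\|_L \leq 1$ and $q_k|_K \in A_{K,k}^U[X]$, using $K \subset U \subset L$. On $K$,
\[
\|f - q_k\|_K \leq \|\widetilde f - p_k\|_L + \|p_k\|_L \frac{C_0 e^{-k\delta}}{1 + C_0 e^{-k\delta}} \leq 3 C_0 e^{-k\delta}.
\]
Choosing $C$ so that $k \geq C \log(\varepsilon^{-1})$ forces $3C_0 e^{-k\delta} \leq \varepsilon/4$ for all $\varepsilon \in ]0,1]$ (absorbing $\log(12 C_0)$ into $C$, since $\log(\varepsilon^{-1}) \geq 0$ but may be $0$; to handle this I take $C$ large and, if needed, note that $k \geq 1$). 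The case $\varepsilon$ near $1$ is covered by enlarging $C$ in terms of $C_0$ and $\delta$, or by observing that $H_\varepsilon(A_K^\Omega) = 0$ trivially for $\varepsilon \geq 2$.

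Finally, I cover. Take a minimal $\varepsilon/4$-cover of $A_{K,k}^U[X]$ in $\mathscr{C}(K)$ with centers $g_j$. For each $f \in A_K^\Omega$ we have $\|f - g_j\|_K \leq \varepsilon/2$ for some $j$. Replacing the centers by points of $A_K^\Omega$ is unnecessary, since $H_\varepsilon(A, M)$ allows arbitrary centers in $M$. So the same number of balls of radius $\varepsilon/2 \leq \varepsilon$ covers $A_K^\Omega$, which gives the claim.

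The main obstacle is the first step: producing a compact $L$ with $K \cup U \subset L \Subset \Omega$ to which the Siciak--Zeriahi theorem applies uniformly. This requires polynomial convexity in the affine $X$ (which comes from Runge-ness together with Theorem \ref{thm_runge}) and regularity, so that $\{V_L < R\} \Subset \Omega$ for some $R > 0$. I would first try to take $L$ as the closure of a strictly psh sublevel set between $U$ and $\Omega$ and invoke Zeriahi's regularity criterion.
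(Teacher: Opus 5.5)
Your argument is essentially the paper's. Both use geometric-rate polynomial approximation from Siciak--Zeriahi on a compact set containing $\overline U$, then transfer an $\varepsilon/4$-cover of $A_{K,k}^U[X]$ to $A_K^\Omega$ by the triangle inequality. There are two differences. First, the paper applies Theorem \ref{thm_sic_zer} directly with $L=\overline U$, $V=\Omega$, getting holomorphic convexity of $\Omega$ from Theorem \ref{thm_runge}. So the step you single out as the main obstacle, building a regular polynomially convex $L$ between $U$ and $\Omega$, is already absorbed into the cited theorem. Second, the paper places the approximant in $2\cdot A_{K,k}^U[X]$ and rescales, where you normalize $p_k$; the two are equivalent.

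One step of your constant-chasing is wrong: enlarging $C$ does not handle $\varepsilon$ near $1$. For every $\varepsilon\in[e^{-1/C},1]$ the hypothesis $k\geq C\log(\varepsilon^{-1})$ admits $k=1$, whatever $C$ is. Nothing then forces $3C_0e^{-\delta}\leq\varepsilon/4$. Your remark about $\varepsilon\geq 2$ does not touch $\varepsilon<1$ either.

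What your argument actually proves is the inequality for $\varepsilon\in\,]0,\varepsilon_0]$, for any fixed $\varepsilon_0<1$, after enlarging $C$ in terms of $C_0$, $\delta$ and $\varepsilon_0$. This is all that Section \ref{sect_est_entr} uses. It is also what the paper's proof gives when read literally: that proof yields $N_{2\varepsilon}(A_K^\Omega,\mathscr{C}(K))\leq N_{\varepsilon/2}(A_{K,k}^U[X],\mathscr{C}(K))$, i.e. the stated bound under $k\geq C\log(2\varepsilon^{-1})$.

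If you want the full range $]0,1]$, you need a separate argument near $\varepsilon=1$:
\begin{enumerate}
\item The constants $|c|\leq 1$ alone force $N_{\varepsilon/4}(A_{K,k}^U[X],\mathscr{C}(K))\geq 16\varepsilon^{-2}\geq 16$.
\item The Kobayashi pseudodistance of $\Omega$ is bounded on $K\times K$, and holomorphic maps $\Omega\to\mathbb{D}$ contract it. Hence, for $\varepsilon$ close to $1$, every $f\in A_K^\Omega$ with $\|f\|_K>\varepsilon$ maps $K$ into a tiny disc near a single point of the unit circle.
\item Consequently the seven centers $0$ and $\frac12 e^{\pi i j/3}$, $j=0,\dots,5$, give $N_\varepsilon(A_K^\Omega,\mathscr{C}(K))\leq 7$, which beats the bound in the first point.
\end{enumerate}
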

	The proof of the above result relies on the following version of a theorem of Siciak \cite{SiciakEnvFirst} on optimal polynomial approximation of holomorphic functions, extended to the setting of affine varieties by Zeriahi \cite[Theorem 3.16 and Corollary 5.5]{ZeriahiSic}; see also \cite{LevenbergSurvAppr} for a survey of related results.
	Recall first that a complex manifold $Y$ is called \textit{holomorphically convex} if for any compact $L \subset Y$, the hull $\widehat{L}_Y$, see (\ref{eq_hull}), is compact in $Y$.
	\begin{thm}\label{thm_sic_zer}
		For any compact subset $L$ of a holomorphically convex domain $V \Subset X$, there is $\epsilon_L > 0$ such that for any $f \in H^0(V)$, we have
		\begin{equation}
			\inf_{p \in A_k[X]} \| f - p \|_{L}
			\leq 
			\exp(- \epsilon_L k) \cdot \| f \|_{V}.
		\end{equation}
	\end{thm}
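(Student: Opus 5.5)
The plan is to run the classical Oka--Weil/Bernstein--Walsh argument, keeping track of degrees. One point has to be settled first. For $f \in H^0(V)$ to be approximable on $L$ by elements of $A_k[X]$, the relevant hull of $L$ must stay inside $V$. I will therefore use the hypothesis in the form it is used in this section: $V$ is Runge in $X$, as $\Omega$ and $U$ are here. With $V$ Stein and Runge in $X$, Theorem \ref{thm_runge} gives that $\widehat{L}_X = \widehat{L}_V$ is compact in $V$. Without the Runge condition the statement cannot hold; for instance, take $V$ an annulus in $\comp$, $L$ a circle in it, and $f = 1/z$. Since $X \subset \comp^m$ is a closed submanifold, Cartan's Theorem B shows that $A[X] := \bigcup_k A_k[X]$ is dense in $H^0(X)$. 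Hence $\widehat{L}_X$ coincides with the polynomial hull $\widehat{L}$ of $L$ in $\comp^m$, intersected with $X$.

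\textbf{Step 1: a polynomial polyhedron.} Since $\widehat{L}$ is compact in $V$, for each point $x$ of the compact set $\partial W$, where $W \Subset V$ is a neighborhood of $\widehat{L}$, there is a polynomial $p_x$ with $|p_x(x)| > 1 > \sup_L |p_x|$. By compactness, finitely many polynomials $p_1, \ldots, p_N$, all of degree $\leq d$, satisfy
\begin{equation*}
	\sup_L |p_j| \leq \theta < 1,
\end{equation*}
and the set $P := \{ x \in X : \|x\| < R,\ |p_j(x)| < 1 \text{ for all } j \}$ has a connected component $P_0 \supset L$ with $P_0 \Subset W \subset V$. Here $R$ is chosen so that $V \subset \{\|x\| < R\}$. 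I may replace $P$ by $P_0$ after adding one more polynomial separating the other components, which are disjoint from $\widehat{L}$.

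\textbf{Step 2: Oka embedding and bounded extension.} Consider $\Phi \colon X \to \comp^{m+N}$, $x \mapsto (x, p_1(x), \ldots, p_N(x))$. It maps $P$ onto a closed submanifold $Z$ of the polydisc $\Delta = \Delta_R^m \times \Delta_1^N$. By Cartan's Theorem B on the Stein domain $\Delta' = \Delta_{R'}^m \times \Delta_{\theta'}^N$ for slightly smaller radii, any $g \in H^0(Z)$ extends to $G \in H^0(\Delta')$. I need the quantitative version: a fixed constant $C$ such that $\|G\|_{\Delta''} \leq C \|g\|_{Z}$ on a still smaller polydisc $\Delta''$ whose closure contains $\Phi(L)$. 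The plan is to obtain this from the open mapping theorem. The restriction map from bounded holomorphic functions on a neighbourhood of $\overline{\Delta''}$ to bounded holomorphic functions on $Z \cap \Delta'$ is a continuous surjection between Fréchet spaces after a shrinking, so it admits the bound. An alternative is Hörmander's $L^2$ extension with a shrinking of the domain. Either way, $F := $ the extension of $f \circ \Phi^{-1}$ satisfies $|F| \leq C \|f\|_V$ on $\Delta''$.

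\textbf{Step 3: truncating the Taylor series.} Choose the polyradius $(R'', \theta'')$ of $\Delta''$ with $\theta < \theta'' < 1$. On $\Phi(L)$ we have $|z_i| \leq \sup_{V}\|x\| < R''$ and $|w_j| \leq \theta < \theta''$. The Cauchy estimates then bound the tail of the Taylor series of $F$ beyond total degree $k$ on $\Phi(L)$ by $C' \rho^{k} \|f\|_V$ for some $\rho < 1$. Call the truncation $T_k$. Then $q := T_k \circ \Phi$ is a polynomial of degree $\leq k d$, and
\begin{equation*}
	\| f - q \|_L \leq C' \rho^{k} \|f\|_V .
\end{equation*}
Replacing $k$ by $\lfloor k/d \rfloor$ gives a bound of the form $C' \rho^{k/d - 1} \|f\|_V$. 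To absorb the constant, note first that the trivial bound $\inf_p \|f - p\|_L \leq \|f\|_V$ (take $p = 0$) settles all small $k$. For large $k$ the constant $C'\rho^{-1}$ is absorbed by taking $\epsilon_L$ smaller than $\log(\rho^{-1})/d$. Together these yield $\exp(-\epsilon_L k)\|f\|_V$ for all $k$.

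The main obstacle is Step 2: obtaining extension from $Z$ with a \emph{uniform} sup-norm bound. Theorem B alone is only qualitative, so the constant has to be extracted by the functional-analytic or $L^2$ argument described there. This is done once for the fixed data $L, V$, so the dependence on $f$ is linear and harmless.
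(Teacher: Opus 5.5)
Your argument is sound, and it takes a genuinely different route from the paper. The paper gives no proof of this statement; it cites Siciak's theorem in Zeriahi's version for affine varieties. That result is pluripotential-theoretic: the decay of best polynomial approximation on $L$ is governed by the Siciak--Zahariuta extremal function of (a regular neighbourhood of) the hull through a Bernstein--Walsh type inequality, and this yields the optimal exponent.

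Your Oka--Weil argument with degree bookkeeping (polynomial polyhedron, Oka embedding into a polydisc, extension with a uniform bound, Taylor truncation) is elementary and self-contained. It only produces some $\epsilon_L>0$, depending on the degrees of the separating polynomials and on the extension constant. Theorem \ref{thm_fin_appr} only needs a positive rate, so this suffices.

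You are also right that holomorphic convexity of $V$, in the paper's sense, is not enough. Your annulus example is a genuine counterexample: pairing with $z$ on the unit circle gives $\|1/z-p\|_L\geq 1$ for every polynomial $p$. What is really needed is $\widehat{L}_X\Subset V$. Runge-ness of $\Omega$ provides exactly this in the application, and it is the natural hypothesis in Siciak's theorem (holomorphy near the polynomial hull).

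Three details need repair.

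\textbf{Step 2.} Phrase the open mapping argument with the Fr\'echet spaces $H^0(\Delta')$ and $H^0(Z\cap\Delta')$ in the compact-open topology. Spaces of bounded functions do not work here, because Theorem B gives no surjectivity for them. With the Fr\'echet spaces, restriction is a continuous linear surjection. Openness at $0$ then yields a compact $K_2\subset Z\cap\Delta'$ and $\delta>0$ such that every $g$ with $\|g\|_{K_2}<\delta$ has an extension of modulus $<1$ on $\overline{\Delta''}$. By homogeneity, every $g$ has an extension $G$ with $\|G\|_{\Delta''}\leq 2\delta^{-1}\|g\|_{Z}$.

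\textbf{Step 1.} No component separation is needed. Since $P\cap\partial W=\emptyset$, the set $P\cap W$ is open and closed in $P$. So put $g:=f\circ\Phi^{-1}$ on $\Phi(P\cap W)$ and $g:=0$ on the rest of $Z$. Also define $P$ with $\max_i|x_i|<R$, so that $\Phi(P)=\Phi(X)\cap\Delta$.

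\textbf{Small $k$.} Your treatment of small $k$ is wrong as written. Taking $p=0$ only gives $\|f\|_V$, while for $k\geq 1$ you need the strictly smaller $e^{-\epsilon_L k}\|f\|_V$. What you need is a uniform $\theta_0<1$ with $\inf_{c\in\comp}\|f-c\|_L\leq\theta_0\|f\|_V$. It exists for the following reasons:
\begin{itemize}
\item the set $\{f\in H^0(V):\|f\|_V\leq 1\}$ is compact for locally uniform convergence, by Montel;
\item $f\mapsto\inf_c\|f-c\|_L$ is continuous there, so its maximum is attained;
\item a maximiser with value $\geq 1$ would satisfy $\|f\|_L=1=\|f\|_V$ with $L$ compact in the connected $V$, so it would be constant by the maximum principle, hence have value $0$.
\end{itemize}
Shrinking $\epsilon_L$ so that $e^{-\epsilon_L k_0}\geq\theta_0$ then covers $1\leq k<k_0$.
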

	\begin{proof}[Proof of Theorem \ref{thm_fin_appr}]
		Let us apply Theorem \ref{thm_sic_zer} with $L := \overline{U}$ and $V := \Omega$ (it follows from Theorem \ref{thm_runge} that $\Omega$ is holomorphically convex, and so Theorem \ref{thm_sic_zer} applies).
		We let $C := \epsilon_L^{-1}$, where $\epsilon_L > 0$ is given by Theorem \ref{thm_sic_zer}. 
		Then for any $f \in A_K^{\Omega}$, $k \geq C \cdot \log (\varepsilon^{-1})$, there is $p \in A_k[X]$, such that
		\begin{equation}\label{eq_fp_appr}
			\| f - p \|_{L}
			\leq
			\varepsilon.
		\end{equation}
		As $\| f \|_{L} \leq 1$, we then have $\| p \|_{L} \leq 2$ for such $p$, or $p \in 2 \cdot A_{K, k}^{U}[X]$.
		Also, by the triangle inequality and (\ref{eq_fp_appr}), any set of points of $\mathscr{C}(K)$ providing an $\varepsilon$-cover of $2 \cdot A_{K, k}^{U}[X]$ also provides a $2 \varepsilon$-cover of $A_K^{\Omega}$.
		However, by rescaling, any $\varepsilon$-cover of $2 \cdot A_{K, k}^{U}[X]$ can be transformed into an $\varepsilon/2$-cover of $A_{K, k}^{U}[X]$ and inversely, finishing the proof.
	\end{proof}
	
	\section{Volumes of unit balls and entropy of polynomial approximations}\label{sect_balls}
	The main goal of this section is to show that the asymptotics of the metric entropy considered in Section \ref{sect_poly} can be expressed through the relative Monge-Ampère energy.
	\par 
	We fix a relatively compact domain $\Omega$ in an affine manifold $X \subset \comp^m$ and a compact subset $K \subset \Omega$.
	Recall that the extremal functions $V_{\Omega}$, $V_{c K, \Omega}$, $c > 0$ were defined in (\ref{eq_extr}) and (\ref{eq_rel_extr}), and the relative Monge-Ampère energy, $E(V_{\Omega}) - E(V_{c K, \Omega})$ was defined in (\ref{eq_energy}).
	\begin{prop}\label{prop_pol_appr}
		Fix $c > 0$ and consider, for any $\varepsilon > 0$, $k_{\varepsilon} \in \nat$ such that $\lim_{\varepsilon \to 0} k_{\epsilon}/\log(\varepsilon^{-1}) = c^{-1}$.
		Then the following asymptotics holds
		\begin{equation}
			\lim_{\varepsilon \to 0}
			\frac{H_{\varepsilon}(A_{K, k_{\varepsilon}}^{\Omega}[X], \mathscr{C}(K))}{(\log (\varepsilon^{-1}))^{n + 1}}
			=
			\frac{E(V_{\Omega}) - E(V_{c K, \Omega})}{c^{n + 1}}.
		\end{equation}
	\end{prop}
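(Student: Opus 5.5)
We apply Proposition \ref{prop_entr_balls} on $V := A_{k_\varepsilon}[X]$, with $N_0 := \|\cdot\|_{\Omega}$ and $N_1 := \|\cdot\|_K$. Then $A_{K,k_\varepsilon}^{\Omega}[X]$ is precisely the unit ball $\mathbb{B}_0$ of $N_0$, and its $\varepsilon$-entropy in $\mathscr{C}(K)$ is comparable to its $\varepsilon$-entropy in $(V, N_1)$. Indeed, centers of an $\varepsilon$-cover may be taken in $V$ at the cost of replacing $\varepsilon$ by $\varepsilon/2$. Since the final limit depends continuously on $c$, this factor $2$ only rescales $\log(\varepsilon^{-1})$ by an additive $O(1)$ and is harmless. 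Since $v = \dim A_k[X] = O(k^n)$, the error $100 v(\log v + 1) = O(k^n \log k)$ is $o((\log \varepsilon^{-1})^{n+1})$ when $k \sim c^{-1}\log(\varepsilon^{-1})$. So the problem reduces to computing
\begin{equation*}
	\lim_{\varepsilon \to 0} \frac{1}{(\log \varepsilon^{-1})^{n+1}} \log \frac{{\rm{vol}}(\mathbb{B}_0)}{{\rm{vol}}(\mathbb{B}_\varepsilon)},
\end{equation*}
where $\mathbb{B}_\varepsilon$ is the unit ball of $\max\{\|\cdot\|_\Omega, \varepsilon^{-1}\|\cdot\|_K\}$.

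Next, write $\varepsilon = e^{-t}$ with $t \sim c k$. The norm $\max\{\|p\|_{\overline\Omega}, e^{t}\|p\|_K\}$ on $A_k[X]$ is the sup-norm of $p$ against the weight which equals $0$ on $\overline{\Omega}$ and $-t$ on $K$. That is, it is the sup-norm on the weighted compact set $\overline{\Omega}$ with weight $\phi_k = k\,\phi$, where $\phi := -c\,\mathbf{1}_K$ up to the $o(1)$ discrepancy $t/k - c$. By monotonicity of volumes in the norm, we may sandwich between $c \pm \delta$ and let $\delta \to 0$ at the end. We now invoke Berman--Boucksom \cite{BermanBouckBalls}, in the version for affine varieties with the Lelong class $\mathscr{L}(X)$ and filtration by total degree. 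This gives
\begin{equation*}
	\frac{1}{k^{n+1}} \log \frac{{\rm{vol}}\,\mathbb{B}(\overline\Omega, 0)}{{\rm{vol}}\,\mathbb{B}(\overline\Omega, k\phi)} \to E(V^*_{\overline\Omega,0}) - E(V^*_{\overline\Omega,\phi}).
\end{equation*}
The weighted extremal function of $(\overline\Omega, \phi)$ is exactly $V_{cK,\Omega}$ from (\ref{eq_rel_extr}). Dividing by $(\log\varepsilon^{-1})^{n+1} \sim (ck)^{n+1}$ then yields $(E(V_\Omega) - E(V_{cK,\Omega}))/c^{n+1}$, with the normalization $2/(n+1)!$ in (\ref{eq_energy}) matching the real dimension $2v$ and the volume convention.

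The main obstacle is the applicability of Berman--Boucksom. Their result requires $(\overline\Omega, \phi)$ to be a \emph{regular} weighted compact set (Bernstein--Markov property, continuity of the weight), whereas here $\phi$ is a discontinuous indicator-type weight. Moreover, their setting is projective: ample line bundles on a projective variety, rather than affine $X$ with degree filtration. For the second issue, I would compactify $X$ in $\mathbb{P}^m$ by its closure $\overline{X}$ with $L = \mathcal{O}(1)$. Then $A_k[X] = H^0(\overline{X}, L^k)|_X$ (after normalization, or up to a subspace of codimension $o(k^n)$, which affects nothing at this scale), and $\mathscr{L}(X)$ corresponds to $\psh(\overline X, \omega_{FS})$. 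For the first issue, I would approximate $K$ from outside by compact neighborhoods $K_j \downarrow K$ and approximate the weight by continuous weights $\phi_j \leq -c\mathbf{1}_K$ from above and below. Monotonicity of the volumes in the weight gives two-sided bounds, and continuity of the energy along monotone sequences of extremal functions (Bedford--Taylor convergence, using non-pluripolarity of $K$ to ensure $V_{cK_j,\Omega}^* \uparrow V^*_{cK,\Omega}$ a.e.) closes the sandwich. If the Bernstein--Markov property fails on $K$ itself, I would instead use that sup-norms on $K$ and $L^2$ norms of a BM measure on $K_j$ differ by $e^{o(k)}$ factors, which is negligible.
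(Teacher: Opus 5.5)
Your skeleton matches the paper's. You use Proposition \ref{prop_entr_balls} to turn entropy into a volume ratio; your remark that centres in $\mathscr{C}(K)$ can be moved into $A_k[X]$ at the cost $\varepsilon\mapsto 2\varepsilon$ is a point the paper passes over. You identify $\max\{\|\cdot\|_\Omega,\varepsilon^{-1}\|\cdot\|_K\}$ with the sup-norm for the weight $-c_0\mathbf{1}_K$ on $\overline\Omega$, compactify, apply Berman--Boucksom, identify the envelope with $V_{cK,\Omega}$, and sandwich in $c$.

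The gap is exactly at the step you call the main obstacle. Continuous weights $\phi_j=-c\chi_j\le -c\mathbf{1}_K$, with $\chi_j\downarrow\mathbf{1}_K$ supported in $K_j$, make the norm \emph{larger}. So they only give $\limsup\le (E(V_\Omega)-E(V_{cK,\Omega}))/c^{n+1}$; that half is fine. The lower bound needs norms \emph{smaller} than the given one, i.e.\ continuous weights $\psi\ge -c\mathbf{1}_K$ on $\overline\Omega$. Any such $\psi$ is $\ge 0$ on the closure of $\overline\Omega\setminus K$. Hence these weights can only decrease to the usc regularization $-c\mathbf{1}_{\mathrm{int}K}$, and the best bound they yield is $E(V_\Omega)-E(V_{c\,\mathrm{int}K,\Omega})$. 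This is $0$ for a non-pluripolar $K$ with empty interior (a small real cube in a ball of $\comp^n$, say), and in general it can be strictly smaller than the claimed value. Your fallback does not repair this. Neighbourhoods $K_j\supset K$ push the norm in the same wrong direction. Also, sup-norms of degree-$k$ polynomials on $K$ and on a fixed neighbourhood $K_j$ are not $e^{o(k)}$-comparable: think of $z^k$ on two concentric discs.

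The paper avoids the issue by invoking a version of Theorem \ref{thm_ber_bouck_balls} for upper semicontinuous metrics, \cite[Lemma 4.7]{FinKIDim}, applied to $\exp(2c_0\mathbf{1}_{\widetilde K})h^{\widetilde L}_{\mathrm{sing}}$. You could also close the gap elementarily:
\begin{itemize}
\item Choose an open $U$ with $K\subset U\Subset\Omega$.
\item By the maximum principle, $\|p\|_{\overline\Omega}=\|p\|_{\overline\Omega\setminus U}$ for $p\in A_k[X]$. So your norm is the weighted sup-norm on the compact set $K\sqcup(\overline\Omega\setminus U)$ with a locally constant, hence continuous, weight.
\item The maximum principle for competitors $u\in\mathscr{L}(X)$ (if $u\le 0$ on $\partial U$ then $u\le 0$ on $U$) shows that the corresponding extremal function is exactly $V_{cK,\Omega}$.
\item The continuous-weight theorem then applies directly, in both directions.
\end{itemize}

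Two further steps are asserted rather than justified. First, Berman--Boucksom needs a smooth projective variety, so you must pass to a resolution $\widetilde X$ of the normalization of $\overline X$, where $\widetilde L$ is only big. Discarding the $o(k^n)$-codimensional complement of $H^0(\overline X,L^{k})$ in $H^0(\widetilde X,\widetilde L^{k})$ then requires an estimate such as Lemma \ref{lem_comp_rest}. That lemma applies here because the norms are $e^{O(k)}$-comparable ($\|\cdot\|_K\le\|\cdot\|_\Omega$). Second, the sandwich in $c$ uses continuity of $c\mapsto E(V_\Omega)-E(V_{cK,\Omega})$. The paper proves this in Lemma \ref{lem_en_cont} from $|V_{c'K,\Omega}-V_{cK,\Omega}|\le|c-c'|$.
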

	\par 
	The proof of Proposition \ref{prop_pol_appr} will be based on a formula for the ratio of unit balls of sup-norms.
	To recall it, we fix a complex projective manifold $Y$, $\dim Y = n$, and a \textit{big line bundle} $L$ over $Y$, i.e., such that for 
	\begin{equation}
		n_k := \dim H^0(Y, L^{\otimes k}),
	\end{equation}
	for some $c > 0$, we have $n_k \geq c \cdot k^n$, for sufficiently large $k \in \nat$.
	\par 
	For a given upper semicontinuous metric $h^L$ on $L$, defined over a non-pluripolar compact subset $K'$ and bounded below by a continuous metric over $K'$, we denote by $P[K', h^L]$ the associated envelope, defined as 
	\begin{equation}\label{eq_v_theta}
		P[K', h^L]
		=
		\inf \Big\{ 
			h^L_0 : h^L_0 \geq h^L \text{ on $K'$, and $h^L_0$ has a psh potential}
		\Big\}.
	\end{equation}	 
	Then the metric $P[K', h^L]_*$ has a psh potential of minimal singularities, cf. \cite{DemPluripot}, \cite[Proposition 2.2]{GuedjLuZeriahEnv}.
	We then define a positive $(1, 1)$-current $c_1(L, P[K', h^L]_*) := c_1(L, h^L_0) + \ddc \phi^*$, where $h^L_0$ is an arbitrary smooth metric on $L$ and $P[K', h^L] = h^L_0 \cdot \exp(- 2 \phi)$.
	\par 
	Now, given two compact non-pluripolar subsets $K_0, K_1$ in $Y$ and upper semicontinuous metrics $h^L_0, h^L_1$, defined on $K_0$ and $K_1$ and bounded below by continuous metrics there, we define the relative Monge-Ampère energy
	\begin{multline}\label{eq_energy_proj}
		E(P[K_0, h^L_0]) - E(P[K_1, h^L_1])
		\\
		=
		\frac{1}{(n + 1)!}
		\sum_{i = 0}^{n} \int_Y \log \Big( \frac{P[K_1, h^L_1]_*}{P[K_0, h^L_0]_*} \Big)
		 \cdot c_1(L, P[K_0, h^L_0]_*)^i \wedge c_1(L, P[K_1, h^L_1]_*)^{n - i}.
	\end{multline}
	The following result was established by Berman-Boucksom \cite{BermanBouckBalls} for continuous metrics.
	We will need a version of it for upper semicontinuous metrics as above, established in \cite[Lemma 4.7]{FinKIDim} by reduction to the continuous case.
	\begin{thm}\label{thm_ber_bouck_balls}
		In the above notation, we denote by $\mathbb{B}^k_i$, $i = 0, 1$, the unit balls on $H^0(Y, L^{\otimes k})$ calculated with respect to the sup-norms associated with $(K_i, h^L_i)$.
		Then we have 
		\begin{equation}
			\lim_{k \to \infty} \frac{1}{k^{n + 1}}  \log \Big( \frac{{\rm{vol}}(\mathbb{B}^k_0)}{{\rm{vol}}(\mathbb{B}^k_1)} \Big)
			=
			E(P[K_0, h^L_0]) - E(P[K_1, h^L_1]).
		\end{equation}
	\end{thm}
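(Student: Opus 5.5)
The plan is to deduce the statement from the continuous case of Berman--Boucksom \cite{BermanBouckBalls} by monotone approximation. The argument has two halves: one inequality follows from inclusions of unit balls, and the other from a comparison at the level of psh envelopes. First, the volume ratio splits as $\log {\rm{vol}}(\mathbb{B}^k_0) - \log {\rm{vol}}(\mathbb{B}^k_1)$, and the energy (\ref{eq_energy_proj}) is antisymmetric and satisfies a cocycle relation. So, after inserting a fixed reference pair (say $K_0$ with a smooth metric), it suffices to treat one usc metric $h^L = h^L_0$ at a time and compare it with continuous metrics on the same compact set $K := K_0$.

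\emph{Step 1 (one inequality).} Since $h^L$ is usc on the compact set $K$, there is a sequence of continuous metrics $h^L_j \searrow h^L$ pointwise on $K$. Then $\sup_K |s|_{h^L_j} \geq \sup_K |s|_{h^L}$, hence $\mathbb{B}^k(K, h^L_j) \subset \mathbb{B}^k(K, h^L)$ for all $k$. Applying the continuous version of Theorem \ref{thm_ber_bouck_balls} to $h^L_j$ gives
\[
\liminf_{k\to\infty} \frac{1}{k^{n+1}} \log {\rm{vol}}(\mathbb{B}^k(K,h^L)) \geq (\text{energy term for } P[K,h^L_j]).
\]
The weights $-\log h^L_j$ increase to $-\log h^L$, so the envelopes $P[K,h^L_j]_*$ increase, modulo a pluripolar set, to $P[K,h^L]_*$ by a Choquet-lemma argument. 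The potentials then increase a.e. to the potential of $P[K,h^L]_*$, all having minimal singularities. Monotone continuity of the Monge--Ampère energy (Bedford--Taylor, or BEGZ in the big setting) lets the right-hand side converge to the energy of $P[K,h^L]_*$. This yields one of the two inequalities.

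\emph{Step 2 (reverse inequality, the main obstacle).} Here I would use the envelope on the other side. For $s \in H^0(Y, L^{\otimes k})$, $\frac1k\log|s|$ is a competitor in (\ref{eq_v_theta}), so $\sup_K |s|_{h^L} = \sup_Y |s|_{P[K,h^L]^{k}}$, and the same holds with the lsc-regularized envelope off a pluripolar set. The aim is to find continuous metrics $g_j$ on $L$ over a compact set, with $g_j \leq P[K,h^L]_*$ on $K$ up to a factor $e^{\delta_j}$ and $\delta_j \to 0$, whose envelopes have energies converging to that of $P[K,h^L]_*$. The inclusions then reverse, $\mathbb{B}^k(K,h^L) \subset e^{k\delta_j}\mathbb{B}^k(K, g_j)$, costing only $k^{n+1}\delta_j \cdot O(1)$ in $\log{\rm{vol}}$ because $n_k = O(k^n)$.

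To construct $g_j$ I would regularize the psh potential of $P[K,h^L]_*$ by a decreasing sequence of continuous quasi-psh functions with small loss of positivity (Demailly regularization on the ample locus). I would then restrict them to $K$ and use quasi-continuity to discard the pluripolar exceptional set where $P[K,h^L] \neq P[K,h^L]_*$. This set carries no Monge--Ampère mass, so it does not affect the energies. The delicate point, which I expect to take most of the work, is ensuring these regularized metrics still dominate $h^L$ on $K$ up to $e^{\delta_j}$ uniformly. The lower bound of $h^L$ by a continuous metric is what should control the envelope from below and make the Dini-type uniform convergence argument on $K$ work. Combining Steps 1 and 2 for both $(K_0,h^L_0)$ and $(K_1,h^L_1)$ gives the limit.
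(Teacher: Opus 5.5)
\textbf{Step 1 is sound, but Step 2 has a genuine gap.} In Step 1, the inclusion $\mathbb{B}^k(K,h^L_j)\subset\mathbb{B}^k(K,h^L)$ for continuous $h^L_j\searrow h^L$, together with monotone convergence of the envelopes and of the energy, gives the lower bound on the $\liminf$. The convergence of envelopes is even exact, by a Dini argument, since competitors are usc and the $h^L_j$ are continuous.

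Step 2 is left open exactly where it has to be proved, and the mechanism you sketch does not work. Your Dini argument, producing $g_j\ge e^{-\delta_j}h^L$ on $K$ with $g_j\le P[K,h^L]_*$, needs $P[K,h^L]_*\ge h^L$ at \emph{every} point of $K$. This holds only off a pluripolar set. For example, take $Y=\mathbb{P}^1$, let $K$ be a closed unit disc in $\comp$ together with an isolated point $p$, $|p|>1$, and let $h^L$ be the flat metric. The weight of $P[K,h^L]$ at $p$ is $0$, while that of $P[K,h^L]_*$ is $\log|p|>0$. So $P[K,h^L]_*(p)<h^L(p)$, and no $g_j\le P[K,h^L]_*$ can dominate $e^{-\delta}h^L$ at $p$. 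The same example shows that $\sup_K|s|_{h^L}$ and $\sup_Y|s|_{P[K,h^L]_*}$ differ in general, so your claim that ``the same holds with the lsc-regularized envelope'' is false; only the inequality $\ge$ holds. The problem is also not about Monge--Amp\`ere mass of the exceptional set: it concerns pointwise constraints on $K$. Discarding that set via quasi-continuity replaces $K$ by a smaller compact set. You would then need a continuity result for weighted envelopes under removal of sets of small capacity, which you do not supply. So the upper bound on the volume is not established.

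\textbf{The missing idea is to compare on all of $Y$, where no exceptional set appears.}
\begin{enumerate}
\item By Lemma \ref{lem_taut_max} and $P[K,h^L]_*\le P[K,h^L]$, we have $\textrm{Ban}_k^{\infty}(K,h^L)=\textrm{Ban}_k^{\infty}(Y,P[K,h^L])\ge\textrm{Ban}_k^{\infty}(Y,P[K,h^L]_*)$.
\item The metric $P[K,h^L]_*$ has a quasi-psh, hence usc, weight $\Phi^*$, so it is lsc. It is therefore the increasing limit of continuous metrics $g_j$ on $Y$, i.e.\ continuous weights $\phi_j\searrow\Phi^*$.
\item This gives $\mathbb{B}^k(K,h^L)\subset\mathbb{B}^k(Y,g_j)$ for all $k,j$, with no loss factor $e^{k\delta_j}$.
\item Since $P[K,h^L]_*$ is itself a competitor for $P[Y,g_j]$, the weight of $P[Y,g_j]$ lies between $\Phi^*$ and $\phi_j$. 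Hence it decreases to $\Phi^*$, staying within minimal singularities.
\item Berman--Boucksom applied to $(Y,g_j)$, plus continuity of the energy along decreasing sequences, gives the upper bound on the $\limsup$.
\end{enumerate}
This route needs no Demailly regularization, since the continuous case only requires $g_j$ continuous, not quasi-psh. It also needs no quasi-continuity and no domination of $h^L$. Combined with your Step 1 and the cocycle reduction, it yields the statement. The paper itself gives no argument here beyond citing Berman--Boucksom and \cite[Lemma 4.7]{FinKIDim}.
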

	\par 
	To prove Proposition \ref{prop_pol_appr}, we need to compare various $\sup$-norms.
	For a subset $E \subset Y$ and a metric $h^L$ on $L$, defined over $E$, we denote by $\textrm{Ban}_k^{\infty}(E, h^L)$ the associated $\sup$-norm on $H^0(Y, L^{\otimes k})$, $k \in \nat^*$.
	Recall first the tautological maximum principle, cf. \cite[Proposition 1.8]{BermanBouckBalls}.
	\begin{lem}\label{lem_taut_max}
		For any subset 	$E \subset Y$ and a metric $h^L$ on $L$, defined over $E$, we have
		\begin{equation}
				\textrm{Ban}_k^{\infty}(E, h^L)
				=
				\textrm{Ban}_k^{\infty}(Y, P[E, h^L]).
		\end{equation}
	\end{lem}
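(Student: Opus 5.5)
The plan is to prove the two inequalities between sup-norms on $H^0(Y, L^{\otimes k})$ separately. Write $P := P[E, h^L]$ and $N := \textrm{Ban}_k^{\infty}$. The envelope $P$ is defined on all of $Y$, and the sup-norm $N(Y, P)$ is the supremum over $Y$ of $|s|_{P^k}$; we read this sup-norm with the upper semicontinuous envelope as defined in (\ref{eq_v_theta}).

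\emph{Inequality $N(E, h^L) \leq N(Y, P)$.} For $x \in E$, the defining condition $h^L_0 \geq h^L$ on $E$ holds for every competitor $h^L_0$, so the infimum satisfies $P \geq h^L$ on $E$. Hence $|s|_{P^k}(x) \geq |s|_{(h^L)^k}(x)$ for every $s$ and every $x \in E$. Taking the supremum over $E \subset Y$ gives the claim.

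\emph{Inequality $N(Y, P) \leq N(E, h^L)$.} Fix $s \neq 0$ in $H^0(Y, L^{\otimes k})$ and set $M := N(E, h^L)(s)$. If $M = 0$, then $s$ vanishes on the non-pluripolar set $E$, so $s = 0$; hence $M > 0$. Define a singular metric $h^L_s$ on $L$ by
\[
	|\xi|_{h^L_s} := |\xi^{\otimes k}|^{1/k} / \big(|s|/M\big)^{1/k},
\]
that is, $h^L_s = (|s|^2/M^2)^{-1/k}$ in the normalization where $|s|_{(h^L_s)^k} \equiv M$. Locally, with $s = f e^{\otimes k}$, its potential is $\frac{1}{k}\log|f| + \text{const}$. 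This function is psh, so $h^L_s$ has a psh potential. Its curvature is the current $\frac1k [\mathrm{div}\, s] \geq 0$, so $h^L_s$ is admissible in the infimum defining $P$, up to the usual sign convention for metrics.

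Next I check that $h^L_s \geq h^L$ on $E$. By construction $|s|_{(h^L_s)^k} = M$ identically, while $|s|_{(h^L)^k} \leq M$ on $E$. Since norms of $s$ are inversely related to the metric weights, this gives $h^L_s \geq h^L$ on $E$ pointwise. Therefore $P \leq h^L_s$ on $Y$, and so $|s|_{P^k} \leq |s|_{(h^L_s)^k} = M$ everywhere on $Y$. This is the reverse inequality.

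The main subtlety is the sign and convention bookkeeping: "$h^L_0 \geq h^L$" must be read in the sense making norms of sections smaller, and "psh potential" must allow singular, $-\infty$-valued potentials such as $\frac1k \log|f|$. One also has to confirm that taking $P$ (or its regularization $P_*$) does not change the supremum. If the norm is taken with respect to $P_*$, the argument still works: $P_* \leq P \leq h^L_s$ pointwise away from a pluripolar set, and $|s|_{P_*^k}$ is controlled because $h^L_s$ is already lower semicontinuous in the relevant sense. Alternatively, one can use that a pluripolar set does not affect a sup of the upper semicontinuous function $|s|_{P^k}$ combined with continuity of $|s|$. I expect this regularity point to be the only delicate step; the rest is the two-line comparison above, matching \cite[Proposition 1.8]{BermanBouckBalls}.
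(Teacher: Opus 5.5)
Your argument is exactly the standard proof of the tautological maximum principle \cite[Proposition 1.8]{BermanBouckBalls}, which the paper invokes without proof: the trivial direction comes from $P[E, h^L] \geq h^L$ on $E$, and the reverse direction from inserting the singular metric normalized so that $|s|_{(h^L_s)^k} \equiv M$ into the infimum (\ref{eq_v_theta}), so it is correct (and the case $M = 0$ needs no non-pluripolarity: normalize to an arbitrary $t > 0$ and let $t \to 0$). You should drop the digression on $P_*$, however, since regularization only preserves $\textrm{Ban}_k^{\infty}(Y, P[E, h^L]_*) \leq \textrm{Ban}_k^{\infty}(E, h^L)$ and the reverse inequality can fail for non-regular $E$ (on $\mathbb{P}^1$ with $E = \overline{D}(0,1) \cup \{2\}$ and the trivial weight, $z^k$ has sup $2^k$ on $E$ but $\sup_{\comp} |z|^k e^{-k \log^+ |z|} = 1$), which is precisely why the lemma is stated with the unregularized envelope.
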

	As an easy consequence of the above, we have the following result.
	\begin{lem}\label{eq_lem_comp_sup_nm}
		There is $C > 0$ such that for any $k \in \nat^*$, we have
		\begin{equation}
			\exp(- Ck) \cdot \textrm{Ban}_k^{\infty}(K_0, h^L_0) \leq \textrm{Ban}_k^{\infty}(K_1, h^L_1) \leq \exp(Ck) \cdot \textrm{Ban}_k^{\infty}(K_0, h^L_0).
		\end{equation}
	\end{lem}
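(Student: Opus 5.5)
By Lemma \ref{lem_taut_max}, it suffices to compare the envelopes $P[K_0, h^L_0]$ and $P[K_1, h^L_1]$ as metrics on all of $Y$. Concretely, we aim for a constant $C > 0$ such that
\begin{equation*}
\exp(-2C) \cdot P[K_0, h^L_0]_* \leq P[K_1, h^L_1]_* \leq \exp(2C) \cdot P[K_0, h^L_0]_*
\end{equation*}
holds on $Y$. Once this comparison of metrics on $L$ is available, the sup-norms on $L^{\otimes k}$ compare with factor $\exp(Ck)$. The reason is that $|s|_{h^{k}} = |s|_h^{k}$ pointwise, so a multiplicative factor $e^{2C}$ on the metric $h$ (which is quadratic in $|\cdot|$) becomes $e^{Ck}$ on the norm of a section of the $k$-th power.

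The key step is to produce the comparison of envelopes. Write $P_i := P[K_i, h^L_i]_*$, $i = 0, 1$. Each $P_i$ has a psh potential of minimal singularities, by the results quoted after (\ref{eq_v_theta}). Any two metrics with minimal singularities differ by a bounded function, since each one's potential is dominated by the other's up to a constant. Hence $\log(P_1/P_0)$ is bounded on $Y$; this is exactly what the paper means by "minimal singularities". More directly, we argue via the defining infimum in (\ref{eq_v_theta}). Let $h'$ be any metric with psh potential such that $h' \geq h^L_1$ on $K_1$.

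We use that $P_0$ is bounded above on $K_1$ by some continuous metric times a constant, and that $h^L_1$ is bounded below by a continuous metric on $K_1$. From these, we will obtain $e^{2C} P_0 \geq h^L_1$ on $K_1$, so that $e^{2C}P_0$ is a competitor for $P[K_1, h^L_1]$. This gives $P_1 \leq e^{2C} P_0$.

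Upper boundedness of $P_0$ on a compact set follows from the fact that $P_0 \cdot (\text{smooth metric})^{-1}$ corresponds to $\exp(-2\phi^*)$, where $\phi^*$ is a psh-type potential. Such a potential is locally bounded above, which controls $P_0$ from above only where it is also bounded below. We will therefore need the non-pluripolarity of $K_0$: it guarantees that the potential is not identically $-\infty$, and that the envelope is finite, i.e. locally bounded (for a big $L$, away from the non-ample locus, which is where the usc regularization behaves well).

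By symmetry, the same argument gives the reverse inequality. Lemma \ref{lem_taut_max} then converts these envelope inequalities into the claimed inequalities of sup-norms, noting that the lower-semicontinuous regularization $(\cdot)_*$ does not change the sup-norms of holomorphic sections.

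An alternative, perhaps cleaner route avoids global envelopes altogether: bound the sup-norm over $K_1$ by the sup-norm over $Y$ against a fixed smooth metric, and then bound the latter by the sup-norm over $K_0$ using a Bernstein–Walsh type inequality. This inequality is itself an instance of Lemma \ref{lem_taut_max} together with local boundedness of the extremal function of a non-pluripolar set.

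The main obstacle is the uniform two-sided bound on $\log(P_1/P_0)$. Specifically, one must check that the envelope of a non-pluripolar set is locally bounded, so that the usc/lsc regularizations are harmless. I would first try to handle this by invoking the minimal-singularities statement already quoted, together with the observation that both $P_i$ are upper bounded by explicit continuous-metric envelopes.
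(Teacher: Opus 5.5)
Your first step is fine: both $P[K_i,h^L_i]_*$ have potentials of minimal singularities, so they are comparable up to a constant factor. The passage to sup-norms, however, has a genuine gap.

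Lemma \ref{lem_taut_max} involves the \emph{unregularized} envelope $P[E,h^L]$. Your claim that replacing it by $P[E,h^L]_*$ does not change sup-norms is false. Take $Y=\mathbb{P}^1$, $L=\mathscr{O}(1)$, $K_0=\overline{D}\cup\{3\}\subset\comp$, and let $\sigma$ be the section vanishing at infinity. Put $|\sigma|_{h^L_0}=1$ on $\overline{D}$ and $|\sigma|_{h^L_0}(3)=e^{50}$; this metric is upper semicontinuous and bounded below. A point is pluripolar, so $P[K_0,h^L_0]_*=P[\overline{D},h^L_0]_*$, for which the sup-norm of $\sigma^k$ is $1$. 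Yet $\textrm{Ban}_k^{\infty}(K_0,h^L_0)(\sigma^k)=e^{50k}$.

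Only $P_*\leq P$ comes for free. Your chain $\textrm{Ban}_k^{\infty}(K_1,h^L_1)=\textrm{Ban}_k^{\infty}(Y,P[K_1,h^L_1])\leq\cdots$ needs the reverse bound $P[K_1,h^L_1]\leq e^{2C}P[K_1,h^L_1]_*$. That bound rests on the upper semicontinuity of $h^L_1$, i.e.\ on $h^L_1\leq h_c$ on $K_1$ for some continuous $h_c$ on $Y$. Your argument never uses this hypothesis, and without it the lemma fails. If $h^L_1$ blows up along a sequence $p_j\to p\in K_1$, then $P[K_1,h^L_1]_*$ is unchanged (a countable set is pluripolar), but $\textrm{Ban}_k^{\infty}(K_1,h^L_1)(s)=+\infty$ whenever $s(p)\neq 0$.

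Your ``more direct'' competitor argument also has its inequalities reversed. The competitor must be $e^{2C}P[K_0,h^L_0]_*$, since $P[K_0,h^L_0]$ itself need not have a psh potential. To make it a competitor for $P[K_1,h^L_1]$, you need two bounds:
\begin{itemize}
\item a \emph{lower} bound on $P[K_0,h^L_0]_*$, i.e.\ an upper bound on its potential; this is what non-pluripolarity of $K_0$ provides, not that the potential is $\not\equiv-\infty$;
\item an \emph{upper} bound on $h^L_1$ over $K_1$.
\end{itemize}
You list the opposite bounds.

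The repair is the paper's proof, which is essentially your ``alternative route''. By upper semicontinuity, choose a continuous $h_c$ on $Y$ dominating $h^L_0$ on $K_0$ and $h^L_1$ on $K_1$, and compare each $(K_i,h^L_i)$ with $(Y,h_c)$. The inequality $\textrm{Ban}_k^{\infty}(K_i,h^L_i)\leq\textrm{Ban}_k^{\infty}(Y,h_c)$ is trivial, and it is exactly where upper semicontinuity enters.

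For the converse, three facts combine:
\begin{itemize}
\item $P[Y,h_c]=P[Y,h_c]_*$, because $h_c$ is continuous, so $P[Y,h_c]_*$ is itself a competitor.
\item The potential of $P[K_i,h^L_i]_*$ is a genuine psh potential. It is bounded above by non-pluripolarity of $K_i$ together with the lower bound on $h^L_i$.
\item That potential is therefore dominated, up to a constant, by the minimal-singularity potential of $P[Y,h_c]$.
\end{itemize}
This gives $P[Y,h_c]\leq e^{2C}P[K_i,h^L_i]_*\leq e^{2C}P[K_i,h^L_i]$. Lemma \ref{lem_taut_max}, applied on both sides, then yields $\textrm{Ban}_k^{\infty}(Y,h_c)\leq e^{Ck}\,\textrm{Ban}_k^{\infty}(K_i,h^L_i)$. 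In this arrangement the regularization enters only through the harmless inequality $P_*\leq P$.
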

	\begin{proof}
		Clearly, it suffices to show Lemma \ref{eq_lem_comp_sup_nm} in a special case when $K_1 = Y$ and $h^L_1$ is any continuous metric dominating $h^L_0$ on $K_0$.
		Then it is immediate that $\textrm{Ban}_k^{\infty}(K_0, h^L_0) \leq \textrm{Ban}_k^{\infty}(Y, h^L_1)$.
		As the potential of $P[K_0, h^L_0]_*$ is psh and of minimal singularities, cf. \cite[Proposition 2.2]{GuedjLuZeriahEnv}, there is a constant $C > 0$, such that $P[Y, h^L_1]_* \leq \exp(C) \cdot P[K_0, h^L_0]_*$.
		However, as $h^L_1$ is continuous, $P[Y, h^L_1]_*$ is one of the competitors in the definition of $P[Y, h^L_1]$.
		It then follows that $P[Y, h^L_1]_* = P[Y, h^L_1]$, and so Lemma \ref{lem_taut_max} implies that $\textrm{Ban}_k^{\infty}(Y, h^L_1) \leq \exp(Ck) \cdot \textrm{Ban}_k^{\infty}(K_0, h^L_0)$, finishing the proof.
	\end{proof}
	We also need to compare the volumes of balls on a finite dimensional vector space $V$ and its subspace $E \subset V$.
	We denote $v := \dim V$, $e := \dim E$, and assume that for some $\epsilon > 0$, we have $e/v \geq 1 - \epsilon$.
	Let $N_0$, $N_1$ be two norms on $V$ and $N'_0$, $N'_1$ be their restrictions to $E \subset V$.
	We denote by $\mathbb{B}_0$, $\mathbb{B}_1$ (resp. $\mathbb{B}'_0$, $\mathbb{B}'_1$) the unit balls on $(V, N_0)$, $(V, N_1)$ (resp. $(E, N'_0)$, $(E, N'_1)$).
	\begin{lem}\label{lem_comp_rest}
		For any $C \geq 1$, verifying $N_0 \cdot \exp(-C) \leq N_1 \leq N_0 \cdot \exp(C)$, we have
		\begin{equation}\label{eq_comp_dist_two}
			\Big| \Big( \log \Big( \frac{{\rm{vol}}(\mathbb{B}_1)}{{\rm{vol}}(\mathbb{B}_0)} \Big) - \log \Big( \frac{{\rm{vol}}(\mathbb{B}'_1)}{{\rm{vol}}(\mathbb{B}'_0)} \Big) \Big|
			\leq
			3 \epsilon \cdot C \cdot v + 40 (1 + \log v) \cdot v.
		\end{equation}
	\end{lem}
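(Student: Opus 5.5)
The plan is to reduce to Hermitian norms with the John ellipsoid theorem, and then compare the two volume ratios through the relative eigenvalues of the two Hermitian norms, using Cauchy interlacing for the restriction to $E$.

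\emph{Step 1: reduction to Hermitian norms.} By (\ref{eq_john_ellips}) choose Hermitian norms $H_i$ on $V$ with $H_i \leq N_i \leq \sqrt{2v}\, H_i$, $i = 0, 1$. The restrictions $H'_i := H_i|_E$ satisfy the same sandwich with respect to $N'_i$, since restriction preserves pointwise inequalities. A sandwich $H \leq N \leq a H$ gives $a^{-2\dim}\,{\rm{vol}}(\mathbb{B}^H) \leq {\rm{vol}}(\mathbb{B}^N) \leq {\rm{vol}}(\mathbb{B}^H)$, where $\dim$ is the complex dimension, so the real dimension is $2\dim$. Hence replacing $N_i$ by $H_i$ on $V$ and $N'_i$ by $H'_i$ on $E$ changes each of the two log-volume ratios in (\ref{eq_comp_dist_two}) by at most $2v\log(2v)$. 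This is absorbed into $40(1+\log v)\, v$.

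The Hermitian norms satisfy $H_0 e^{-C'} \leq H_1 \leq H_0 e^{C'}$ with $C' := C + \log\sqrt{2v}$. The extra $\log\sqrt{2v}$ will enter only multiplied by $(v-e) \leq \epsilon v$ and a fixed constant, so it is again absorbed into the $v\log v$ term.

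\emph{Step 2: eigenvalue formula.} Let $\lambda_1 \leq \cdots \leq \lambda_v$ be the logarithms of the eigenvalues of $H_1$ relative to $H_0$, i.e. $\|e_i\|_1 = e^{\lambda_i}\|e_i\|_0$ in a simultaneously orthogonal basis. The comparability assumption gives $\lambda_i \in [-C', C']$, and
\[
\log\big({\rm{vol}}(\mathbb{B}_0)/{\rm{vol}}(\mathbb{B}_1)\big) = 2\sum_{i=1}^{v}\lambda_i .
\]
Similarly, with $\mu_1 \leq \cdots \leq \mu_e$ the relative log-eigenvalues of $(H'_0, H'_1)$ on $E$,
\[
\log\big({\rm{vol}}(\mathbb{B}'_0)/{\rm{vol}}(\mathbb{B}'_1)\big) = 2\sum_{j=1}^{e}\mu_j .
\]
The relevant operator is the compression to $E$ of the positive operator representing $H_1^2$ with respect to $H_0$, but in the inner product $H_0'$ on $E$. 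So the Courant--Fischer min-max principle yields the interlacing inequalities $\lambda_j \leq \mu_j \leq \lambda_{j+d}$, with $d := v - e \leq \epsilon v$.

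\emph{Step 3: bounding the difference.} Write
\[
\sum_{i}\lambda_i - \sum_j\mu_j = \sum_{j=1}^{e}(\lambda_j - \mu_j) + \sum_{i=e+1}^{v}\lambda_i .
\]
The second sum is bounded in absolute value by $dC'$. For the first, interlacing gives $0 \leq \mu_j - \lambda_j \leq \lambda_{j+d} - \lambda_j$. The sum $\sum_{j=1}^{e}(\lambda_{j+d}-\lambda_j)$ telescopes in blocks to at most $\sum_{i>v-d}\lambda_i - \sum_{i\leq d}\lambda_i \leq 2dC'$. Altogether
\[
\Big|\sum_i\lambda_i - \sum_j\mu_j\Big| \leq 3dC' \leq 3\epsilon v C',
\]
which gives the bound $3\epsilon C v$ up to the normalization factor $2$ of the volume conventions, plus $O(v\log v)$. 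If one insists on the literal constant $3$, the factor $2$ should be tracked against the normalization of ${\rm{vol}}$; otherwise it is harmless for the application.

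\emph{Main obstacle.} The only delicate point is Step 2. The interlacing must be applied to the correct compressed operator: it is the restriction of the quadratic form $H_1^2$ to $E$, measured in $H_0|_E$. This is a Rayleigh-quotient statement, so min-max applies directly to the generalized eigenvalue problem, and the index shift is exactly $d = v - e$. Everything else is bookkeeping of constants.
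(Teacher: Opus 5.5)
Your route differs from the paper's, which gives no argument and simply invokes Lemmas 2.22 and 3.3 of \cite{FinGQBig}. You reduce to Hermitian norms via (\ref{eq_john_ellips}) and then compare relative spectra by Cauchy interlacing for the compression of $H_1^2$ to $(E, H_0|_E)$. This gives a self-contained proof and shows where each term comes from: the $v\log v$ term comes only from John's theorem, and the $\epsilon C v$ term comes from the index shift $d = v - e$. Steps 1 and 2 are correct.

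Step 3, however, does not deliver the stated inequality. Bounding the two pieces of your decomposition separately gives $|\sum_i \lambda_i - \sum_j \mu_j| \leq 3dC'$. That yields $6dC'$, roughly $6\epsilon C v$, for the difference of log-volume ratios. For large $C$ this is not dominated by $3\epsilon C v + 40(1+\log v) v$, so as written you prove the lemma with $6$ in place of $3$.

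The factor $2$ is not a normalization artifact. ${\rm{vol}}$ is Lebesgue measure on the underlying real space of dimension $2v$. That is exactly the convention under which you correctly wrote $\log({\rm{vol}}(\mathbb{B}_0)/{\rm{vol}}(\mathbb{B}_1)) = 2\sum_i\lambda_i$, as in Proposition \ref{prop_entr}.

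The actual loss is the triangle inequality: your two pieces have opposite signs and partly cancel. Apply interlacing directly to the sums,
\begin{equation*}
\sum_{j=1}^{e} \lambda_j \leq \sum_{j=1}^{e} \mu_j \leq \sum_{i=d+1}^{v} \lambda_i,
\qquad \text{hence} \qquad
\sum_{i=1}^{d} \lambda_i \leq \sum_{i=1}^{v} \lambda_i - \sum_{j=1}^{e} \mu_j \leq \sum_{i=e+1}^{v} \lambda_i .
\end{equation*}
So this difference is at most $dC'$ in absolute value, and the Hermitian log-volume difference is at most $2dC' = 2dC + d\log(2v)$. The John errors from Step 1 are at most $v\log(2v)$ on $V$ and $e\log(2v)$ on $E$; your stated $2v\log(2v)$ per ratio is an overestimate. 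Adding these, and using $d + e = v$ and $d \leq \epsilon v$, you get the bound $2\epsilon C v + 2v\log(2v)$. This is at most $3\epsilon C v + 40(1+\log v)v$, which is the lemma.
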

	\begin{proof}
		It is an immediate consequence of \cite[Lemmas 2.22 and 3.3]{FinGQBig}.
	\end{proof}
	\par 
	Let us now make a connection between the extremal functions and the psh envelopes.
	We denote by $\overline{X} \subset \mathbb{P}^m$ the Zariski closure of $X$, and by $L$ the restriction of the hyperplane bundle $\mathscr{O}(1)$ over $\overline{X}$.
	Even though $X$ is a manifold, $\overline{X}$ might be singular; let $\widetilde{X}$ be a resolution of singularities of its normalization.
	We denote by $\widetilde{L}$ the pull-back of $L$ to $\widetilde{X}$.
	Note that $\widetilde{L}$ is then big, as $L$ is ample and we have an embedding
	\begin{equation}
		H^0(\overline{X}, L^{\otimes k})
		\hookrightarrow
		H^0(\widetilde{X}, \widetilde{L}^{\otimes k}).
	\end{equation}
	Moreover, it is classical, cf. \cite[proof of Lemma 2.2.3]{LazarBookI}, that this embedding is “essentially" surjective, i.e. for any $\epsilon > 0$, there is $k_0 \in \nat$ such that for any $k \geq k_0$, we have
	\begin{equation}\label{eq_as_surj}
		\frac{\dim H^0(\overline{X}, L^{\otimes k})}{\dim H^0(\widetilde{X}, \widetilde{L}^{\otimes k})} 
		\geq
		1 - \epsilon.
	\end{equation}
	We fix a holomorphic section $\sigma \in H^0(\mathbb{P}^m, \mathscr{O}(1))$ whose divisor is the hyperplane at infinity.
	\par
	We denote by $h^{\widetilde{L}}_{{\rm{sing}}}$ the pull-back of the singular metric on $L$ such that for any $x \in X$, we have $\| \sigma(x) \|_{h^{\widetilde{L}}_{{\rm{sing}}}} = 1$.
	Denote by $\widetilde{K}$, $\widetilde{\Omega}$ the preimages of $K$ and $\overline{\Omega}$ in $\widetilde{X}$.
	Note that the restriction of $h^{\widetilde{L}}_{{\rm{sing}}}$ to $\Omega$ is continuous, and so $P[\widetilde{\Omega}, h^{\widetilde{L}}_{{\rm{sing}}}]$ is well defined.
	The growth conditions coming from the assumption that the functions are considered in the Lelong class then imply that the potentials of the metrics $h^{\widetilde{L}}_{{\rm{sing}}} \cdot \exp(- 2 V_{\Omega})$ and $h^{\widetilde{L}}_{{\rm{sing}}} \cdot \exp(- 2 V_{c K, \Omega})$ are bounded near the hyperplane at infinity.
	The normality of $\widetilde{X}$ implies that the upper semicontinuous regularizations of these potentials extend to $\widetilde{X}$ as psh potentials, and from the maximality, for any $c > 0$, we deduce that
	\begin{equation}\label{eq_metr_pot0}
	\begin{aligned}
		&
		h^{\widetilde{L}}_{{\rm{sing}}} \cdot \exp(- 2 V_{\Omega})
		=
		P[\widetilde{\Omega}, h^{\widetilde{L}}_{{\rm{sing}}}],
		\\
		&
		h^{\widetilde{L}}_{{\rm{sing}}} \cdot \exp(- 2 V_{c K, \Omega})
		=
		P\Big[\widetilde{\Omega}, \exp(2 c \mathds{1}_{\widetilde{K}}) h^{\widetilde{L}}_{{\rm{sing}}} \Big],
	\end{aligned}
	\end{equation}
	where $\mathds{1}_{\widetilde{K}}$ is the indicator function of $\widetilde{K} \subset \widetilde{X}$.
	This will be used in the following simple result.
	\begin{lem}\label{lem_en_cont}
		The function $c \mapsto E(V_{\Omega}) - E(V_{cK, \Omega})$ is continuous on 
$[0, +\infty[$.
	\end{lem}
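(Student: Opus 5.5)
The plan is to get continuity from the monotone, uniform-in-$c$ structure of the family $V_{cK,\Omega}$, together with the continuity of the energy functional along uniformly converging bounded potentials. First I record the elementary structural facts. By definition (\ref{eq_rel_extr}), $c \mapsto V_{cK,\Omega}$ is pointwise non-increasing, $V_{0K,\Omega} = V_{\Omega}$, and $V_{\Omega} - c \le V_{cK,\Omega} \le V_{\Omega}$, since $V_{\Omega} - c$ is a competitor. The key additional observation is convexity: for $c = t c_0 + (1-t) c_1$, the combination $t V_{c_0K,\Omega} + (1-t) V_{c_1K,\Omega}$ is in $\mathscr{L}(X)$, is $\le 0$ on $\overline{\Omega}$ and is $\le -c$ on $K$. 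Hence it is $\le V_{cK,\Omega}$, i.e. $c \mapsto V_{cK,\Omega}(z)$ is concave for each $z$. Combining this with the bound $V_{cK,\Omega} \geq V_{\Omega} - c$, I get the Lipschitz estimate
\begin{equation*}
\sup_X |V^*_{cK,\Omega} - V^*_{c'K,\Omega}| \le |c - c'|
\end{equation*}
for all $c, c' \ge 0$. It comes from $V_{c'K,\Omega} \ge V_{cK,\Omega} - (c'-c)$ for $c' \ge c$: the function $V_{cK,\Omega} - (c'-c)$ is a competitor for $V_{c'K,\Omega}$, since it is $\le 0$ on $\overline{\Omega}$ and $\le -c'$ on $K$. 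Thus the family of potentials is $1$-Lipschitz in $c$ in sup-norm, so concavity is in fact not even needed.

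Next I pass to $\widetilde{X}$ via (\ref{eq_metr_pot0}). All the metrics $P[\widetilde{\Omega}, \exp(2c\mathds{1}_{\widetilde{K}}) h^{\widetilde L}_{\rm sing}]_*$ are psh metrics of minimal singularities on the big line bundle $\widetilde{L}$. Moreover, their potentials relative to the fixed one $\phi_0 := V_{\Omega}^*$ differ by bounded functions $u_c := V^*_{cK,\Omega} - V^*_{\Omega}$ with $-c \le u_c \le 0$ and $\|u_c - u_{c'}\|_\infty \le |c-c'|$. The energy (\ref{eq_energy}) is then, up to the normalizing constant, the Aubin--Mabuchi type relative energy $E(\phi_0) - E(\phi_0 + u_c)$ on the class of potentials with the same (minimal) singularities, and it is exactly what enters Theorem \ref{thm_ber_bouck_balls} via (\ref{eq_energy_proj}).

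Finally I invoke the standard Lipschitz property of the Monge--Ampère energy among potentials with fixed minimal singularities. For $\psi_1 \le \psi_2$ in this class, the cocycle formula gives
\begin{equation*}
0 \le E(\psi_2) - E(\psi_1) \le \frac{1}{(n+1)!}\sum_i \int (\psi_2 - \psi_1)(dd^c\psi_1)^i\wedge(dd^c\psi_2)^{n-i} \le \sup|\psi_2-\psi_1| \cdot \mathrm{vol}(\widetilde{L}),
\end{equation*}
because the non-pluripolar masses of such currents all equal $\mathrm{vol}(\widetilde L)$ (for instance, by Boucksom--Eyssidieux--Guedj--Zeriahi). In general one compares both $\psi$'s with $\max(\psi_1,\psi_2)$, using monotonicity of $E$. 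Applying this with $\psi_1 = \phi_0 + u_{c'}$ and $\psi_2 = \phi_0 + u_c$ gives $|E(V_{cK,\Omega}) - E(V_{c'K,\Omega})| \le C|c - c'|$, which yields continuity, including at $c = 0$ where the energy vanishes.

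The main obstacle I expect is justifying the cocycle/monotonicity formula and the mass identity in the present singular setting: the metric $h^{\widetilde L}_{\rm sing}$ is singular at infinity, and $\widetilde{L}$ is only big. I would handle this by noting that all potentials involved have minimal singularities (as recalled before Lemma \ref{eq_lem_comp_sup_nm}), so the Bedford--Taylor/non-pluripolar product theory for relative full mass classes applies and the integration-by-parts identities hold. Alternatively, one can avoid this entirely by reading the energy difference through Theorem \ref{thm_ber_bouck_balls}, since Lemma \ref{lem_comp_rest}-type comparisons show that sup-norms differing by a factor $\exp(k|c-c'|)$ change $\log$ volume ratios by at most $2|c-c'| k\, n_k$.
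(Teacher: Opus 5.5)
Your proposal is correct and follows essentially the same route as the paper. Both arguments combine three ingredients: the cocycle formula expressing $E(V_{cK,\Omega})-E(V_{c'K,\Omega})$ as a sum of mixed Monge--Amp\`ere integrals; the fact that these mixed measures, viewed on $\widetilde{L}$ via (\ref{eq_metr_pot0}) as products of minimal-singularity metrics, all have total mass $\int_{\overline{X}} c_1(L)^n$; and the competitor bound $|V^*_{c'K,\Omega}-V^*_{cK,\Omega}|\le|c-c'|$. Together they give a Lipschitz estimate, which implies continuity. Your concavity remark and your alternative argument through Theorem \ref{thm_ber_bouck_balls} are not needed, although the latter would also work.
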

	\begin{proof}
		By the cocycle property of the Monge-Ampère energy functional, cf. \cite[Corollary 3.2]{BermanBouckBalls}, for any $c, c' > 0$, we have
		\begin{multline}
			(E(V_{\Omega}) - E(V_{c K, \Omega}))
			-
			(E(V_{\Omega}) - E(V_{c' K, \Omega}))
			\\
			=
			\frac{2}{(n + 1)!}
			\sum_{i = 0}^{n} \int_X \big( V_{c' K, \Omega}^* - V_{c K, \Omega}^* \big)
			 \cdot (dd^c V_{c' K, \Omega}^*)^i \wedge (dd^c V_{c K, \Omega}^*)^{n - i}.
		\end{multline}
		Note that by (\ref{eq_metr_pot0}) and the definition of the first Chern form, we deduce
		\begin{multline}\label{eq_ddc_v_c1}
			(dd^c V_{c' K, \Omega}^*)^i \wedge (dd^c V_{c K, \Omega}^*)^{n - i}
			\\
			=
			c_1(\widetilde{L}, P\big[\widetilde{\Omega}, \exp(2 c' \mathds{1}_{\widetilde{K}}) h^{\widetilde{L}}_{{\rm{sing}}} \big])^i \wedge c_1(\widetilde{L}, P\big[\widetilde{\Omega}, \exp(2 c \mathds{1}_{\widetilde{K}}) h^{\widetilde{L}}_{{\rm{sing}}} \big])^{n - i}.
		\end{multline}
		As the integral below depends only on the singularity class of the metrics, see \cite[Theorem 1.2]{BermanBouckBalls}, and our metrics are of minimal singularities, cf. \cite[Proposition 2.2]{GuedjLuZeriahEnv}, we obtain
		\begin{equation}
			\int_{\widetilde{X}} c_1(L, P\big[\widetilde{\Omega}, \exp(2 c' \mathds{1}_{\widetilde{K}}) h^{\widetilde{L}}_{{\rm{sing}}} \big])^i \wedge c_1(L, P\big[\widetilde{\Omega}, \exp(2 c \mathds{1}_{\widetilde{K}}) h^{\widetilde{L}}_{{\rm{sing}}} \big])^{n - i}
			=
			\int_{\overline{X}} c_1(L)^n.
		\end{equation}
		It now suffices to remark that $|V_{c' K, \Omega}^* - V_{c K, \Omega}^*| \leq |c - c'|$, which finishes the proof.
	\end{proof}
	\par 
	\begin{proof}[Proof of Proposition \ref{prop_pol_appr}]
		Let $\mathcal{J}_{\overline{X}}$ denote the ideal sheaf of holomorphic functions on $\mathbb{P}^m$ vanishing along $\overline{X}$, and $\iota : \overline{X} \to \mathbb{P}^m$ the embedding.
		Consider the short exact sequence of sheaves
		\begin{equation}
			0 
			\rightarrow 
			\mathcal{J}_{\overline{X}} 
			\rightarrow
			\mathscr{O}_{\mathbb{P}^m} 
			\rightarrow
			\iota_* \mathscr{O}_{\overline{X}} 
			\rightarrow
			0.
		\end{equation}
		Tensoring the above sequence by $\mathscr{O}(1)^{\otimes k}$ and passing to the associated long exact sequence, we get
		\begin{equation}
			\cdots 
			\rightarrow 
			H^0(\mathbb{P}^m, \mathscr{O}(1)^{\otimes k})
			\rightarrow
			H^0(\overline{X}, L^{\otimes k})
			\rightarrow
			H^1(\mathbb{P}^m, \mathscr{O}(1)^{\otimes k} \otimes \mathcal{J}_{\overline{X}})
			\rightarrow
			\cdots.
		\end{equation}
		By Serre's vanishing theorem, for $k$ large enough, the last cohomology group above vanishes, and so the first map above is surjective.
		When this is combined with the classical identification of $H^0(\mathbb{P}^m, \mathscr{O}(1)^{\otimes k})$ with the space of homogeneous polynomials of degree $k$ of $m + 1$ variables, for $k$ large enough, we obtain the following well-known isomorphism
		\begin{equation}
			A_k[X] \simeq H^0(\overline{X}, L^{\otimes k}),
		\end{equation}
		which can be realized by multiplication by a $k$-th tensor power of a holomorphic section $\sigma \in H^0(\mathbb{P}^m, \mathscr{O}(1))$ whose divisor is the hyperplane at infinity.
		\par 
		For any $k \in \nat$, we denote by $N_0^k$ (resp. $N_1^k$) the sup-norm on $A_k[X]$, evaluated on $\Omega$ (resp. $K$).
		We let $\mathbb{B}_{i}^k$ be the unit ball of $N_i^k$, and let $\mathbb{B}_{\varepsilon}^k$ be the unit ball of $\max \{ N_0^k, \varepsilon^{-1} \cdot N_1^k \}$.
		By Proposition \ref{prop_entr_balls}, we conclude that 
		\begin{equation}\label{eq_entr_appr_1}
			\Big|
			H_{\varepsilon}(A_{K, k_{\varepsilon}}^{\Omega}[X], \mathscr{C}(K))
			-
			\log \Big( \frac{{\rm{vol}}(\mathbb{B}_{0}^{k_{\varepsilon}})}{{\rm{vol}}(\mathbb{B}_{\varepsilon}^{k_{\varepsilon}})} \Big)
			\Big|
			\leq
			100 n_{k_{\varepsilon}} (\log(n_{k_{\varepsilon}}) + 1).
		\end{equation}
		If we denote by $\widetilde{\mathbb{B}}_0^k$, $\widetilde{\mathbb{B}}_{\varepsilon}^k$ the unit balls defined on $H^0(\widetilde{X}, \widetilde{L}^{\otimes k})$ analogously to the above, by Lemmas \ref{eq_lem_comp_sup_nm}, \ref{lem_comp_rest} and (\ref{eq_as_surj}), we see that for any $\epsilon > 0$, there is $k_0 \in \nat$ such that for any $k \geq k_0$, we have
		\begin{equation}\label{eq_appr_big_bl}
			\Big|
			\log \Big( \frac{{\rm{vol}}(\widetilde{\mathbb{B}}_{0}^k)}{{\rm{vol}}(\widetilde{\mathbb{B}}_{\varepsilon}^k)} \Big)
			-
			\log \Big( \frac{{\rm{vol}}(\mathbb{B}_{0}^k)}{{\rm{vol}}(\mathbb{B}_{\varepsilon}^k)} \Big)
			\Big|
			\leq
			\epsilon k n_k.
		\end{equation}
		We will now show that the norms involved in (\ref{eq_appr_big_bl}) are actually supremum norms, so that the desired ratio of volumes can be computed by Theorem \ref{thm_ber_bouck_balls}.
		\par 
		Note that for $c_0 := \frac{\log(\varepsilon^{-1})}{k}$, we have the following identity
		\begin{equation}
			\max \{ N_0^k, \varepsilon^{-1} \cdot N_1^k \}
			=
			\textrm{Ban}_k^{\infty} \Big(\widetilde{\Omega}, \exp(2 c_0 \mathds{1}_{\widetilde{K}}) h^{\widetilde{L}}_{{\rm{sing}}} \Big)|_{A_k[X]}.
		\end{equation}
		Clearly, the metric $\exp(2 c_0 \mathds{1}_{\widetilde{K}}) h^{\widetilde{L}}_{{\rm{sing}}}$ is upper semicontinuous, as $c_0 > 0$.
		By this, Theorem \ref{thm_ber_bouck_balls} and (\ref{eq_ddc_v_c1}), we conclude that for any $c' < c < c''$ and $k_{\varepsilon} \in \nat$ as in Proposition \ref{prop_pol_appr}, we have
		\begin{multline}\label{eq_fin_est1}
			E(V_{\Omega}) - E(V_{c'' K, \Omega})
			\leq
			\liminf_{\varepsilon \to 0}
			\frac{1}{k_{\varepsilon}^{n + 1}}
			\log \Big( 
			\frac{{\rm{vol}}(\widetilde{\mathbb{B}}_{0}^{k_{\varepsilon}})}{{\rm{vol}}(\widetilde{\mathbb{B}}_{\varepsilon}^{k_{\varepsilon}})}
			\Big)
			\\
			\leq
			\limsup_{\varepsilon \to 0}
			\frac{1}{k_{\varepsilon}^{n + 1}}
			\log \Big( 
			\frac{{\rm{vol}}(\widetilde{\mathbb{B}}_{0}^{k_{\varepsilon}})}{{\rm{vol}}(\widetilde{\mathbb{B}}_{\varepsilon}^{k_{\varepsilon}})}
			\Big)
			\leq
			E(V_{\Omega}) - E(V_{c' K, \Omega}).
		\end{multline}
		By (\ref{eq_entr_appr_1}), (\ref{eq_appr_big_bl}) and (\ref{eq_fin_est1}), we conclude that
		\begin{multline}
			E(V_{\Omega}) - E(V_{c'' K, \Omega})
			\leq
			\liminf_{\varepsilon \to 0}
			\frac{H_{\varepsilon}(A_{K, k_{\varepsilon}}^{\Omega}[X], \mathscr{C}(K))}{k_{\varepsilon}^{n + 1}}
			\\
			\leq
			\limsup_{\varepsilon \to 0}
			\frac{H_{\varepsilon}(A_{K, k_{\varepsilon}}^{\Omega}[X], \mathscr{C}(K))}{k_{\varepsilon}^{n + 1}}
			\leq
			E(V_{\Omega}) - E(V_{c' K, \Omega}).
		\end{multline}
		This finishes the proof by Lemma \ref{lem_en_cont}.
	\end{proof}

	\section{Relative capacity and relative Monge-Ampère energy}\label{sect_rel_cap}
	The main goal of this section is to relate the (local) notion of relative capacity with the (global) notion of relative Monge-Ampère energy, that is, to establish Theorem \ref{thm_rel_cap}.
	\par 
	The proof of Theorem \ref{thm_rel_cap} is based on a combination of certain results on the relative extremal functions.
	To set up the notation, we denote by $\Omega$ a strictly hyperconvex strictly Runge domain in an affine manifold $X \subset \comp^m$, and by $K$ a non-pluripolar compact subset of $\Omega$. 
	We use the notation $\rho, \Omega', \Omega_r$, $r < 1$, introduced in (\ref{eq_sublevel}).
	Recall that the extremal functions $V_{\Omega}$, $V_{c K, \Omega}$, $c > 0$ were defined in (\ref{eq_extr}) and (\ref{eq_rel_extr}), and the relative Monge-Ampère energy, $E(V_{\Omega}) - E(V_{c K, \Omega})$, in (\ref{eq_energy}).
	\begin{prop}\label{prop_v_ck_om_coin}
		There is $c_0 > 0$ such that for any $0 < c < c_0$, outside $\Omega$, we have $V_{c K, \Omega} = V_{\Omega}$, and on $\partial \Omega$, we have $V_{c K, \Omega} = 0$.
	\end{prop}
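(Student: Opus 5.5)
The plan is to build an explicit competitor that glues $V_{\Omega}$ outside $\Omega$ to a rescaled copy of the hyperconvex exhaustion $\rho$ inside. This competitor then forces $V_{cK,\Omega} \geq V_{\Omega}$ outside $\Omega$. The reverse inequality $V_{cK,\Omega} \leq V_{\Omega}$ holds everywhere, since every competitor for $V_{cK,\Omega}$ is also a competitor for $V_{\Omega}$. Likewise $V_{cK,\Omega} \leq 0$ on $\overline{\Omega}$, hence on $\partial\Omega$.

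\textbf{Step 1: boundary values of $V_{\Omega}$.} First we check that $V_{\Omega}$ is continuous and vanishes on $\partial\Omega$. Since $\Omega = \Omega_0 = \{\rho < 0\}$ with $\rho$ continuous psh on $\Omega' \supset \overline{\Omega}$, we get $\overline{\Omega} \subset \{\rho \leq 0\}$. We will use that $\overline{\Omega}$ is $\mathscr{L}(X)$-regular. One way to see this: $\Omega'$ is Runge, so $\overline{\Omega}$ is polynomially convex in $X$ (via Theorem \ref{thm_runge} and the psh hull argument in the proof of Proposition \ref{prop_runge_red}, $\widehat{\overline{\Omega}}$ stays in $\{\rho\le 0\}$). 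Local regularity at boundary points then comes from the psh defining function, i.e. $\rho$ is a local psh barrier, and Siciak-type regularity (Zeriahi's version on affine manifolds) gives continuity of $V_{\Omega}$.

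The more elementary route I would actually try first is this. On a compact set $\Omega_{1/2}\setminus\Omega$ we have $V_{\Omega} > 0$, because the hull of $\overline\Omega$ lies in $\{\rho\le0\}$ and the polynomial hull coincides with $\{V_\Omega=0\}$. Then lower semicontinuity of $V_\Omega$ near $\partial \Omega$ follows from the local comparison below.

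\textbf{Step 2: the gluing.} Fix $0<r<1$ with $\overline{\Omega_r}$ compact in $\Omega'$. By continuity of $V_{\Omega}$ and $V_\Omega\ge 0$, choose $A>0$ such that $A\rho \leq V_{\Omega}$ on $\partial\Omega_r$, where $A\rho = Ar$; this needs $V_{\Omega} \geq \delta > 0$ on $\partial \Omega_r$, which holds because points with $\rho = r > 0$ are not in $\{V_\Omega=0\}$. Then set
\begin{equation*}
u := \max\{V_{\Omega}^*, A\rho\} \ \text{on } \Omega_r, \qquad u := V_{\Omega}^* \ \text{on } X\setminus\Omega_r .
\end{equation*}
We need $A\rho < V_\Omega^*$ near $\partial\Omega_r$; shrinking $A$ and using strict inequality on a neighbourhood gives this. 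So $u$ is psh and lies in $\mathscr{L}(X)$.

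On $\overline\Omega$ we have $V_\Omega^*=0$ (continuity from Step 1) and $A\rho\le 0$, so $u\le 0$ there. On $K$ we have $u = \max\{0, A\rho\}$, which is not $\leq -c$. Hence we replace $V_\Omega^*$ inside by the \emph{relative} piece: use instead
\begin{equation*}
u := \max\{V_{\Omega}^* , A\rho\}\ \text{ outside } \Omega, \qquad u := A\rho \ \text{ on } \overline{\Omega}.
\end{equation*}
This is psh provided $A\rho \geq V_\Omega^*$ fails only outside $\Omega$, i.e. provided $u$ equals $\max$ near $\partial\Omega$. Since $V_\Omega^*=0=A\rho$ on $\partial\Omega$, we need $V_{\Omega}\le A\rho$ in a one-sided neighbourhood outside $\partial\Omega$. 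This forces a Lipschitz-type (Hölder) comparison $V_\Omega \le A \rho$ near $\partial\Omega$ outside, together with $A\rho\le V_\Omega$ farther out. I would therefore instead glue at level $-\eta$: take
\begin{equation*}
u := \max\{V_\Omega^*,\, A(\rho + \eta)\} \ \text{on } \Omega_r, \qquad u := V_\Omega^* \ \text{elsewhere}.
\end{equation*}
Then $u\le A\eta$ on $\overline\Omega$, and $u - A\eta$ is a competitor whose value on $K$ is at most $\max\{-A\eta, A(\max_K\rho+\eta)-A\eta\}$. Since $\max_K \rho<0$, choosing $\eta$ small makes this $\le -c_0 <0$. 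This gives only $V_{cK,\Omega}\ge V_\Omega - A\eta$, so the loss $A\eta$ has to be removed.

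\textbf{Step 3 (main obstacle): removing the loss.} The genuine content is to show $V_{cK,\Omega}=V_\Omega$ outside $\Omega$, which I would attack by the maximality and domination principle. Let $w := V^*_{cK,\Omega}$. On $X\setminus\overline\Omega$, both $w$ and $V_\Omega$ are maximal, since $(dd^c\cdot)^n$ of each is supported on $\overline{\Omega}$, and both grow like $\log\|x\|$ with bounded difference. Comparing them on $X\setminus\overline\Omega$ via the comparison principle for the Lelong class (on $\widetilde X$ as in Section \ref{sect_balls}, using that the total masses agree with $\int c_1(L)^n$) reduces the claim to $w\ge 0$ on $\partial\Omega$.

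So the key is the boundary value $w=0$ on $\partial\Omega$ for small $c$. For this, take the competitor from Step 2 without shifting: $u_c := \max\{V_\Omega^*, \tfrac{c}{|\max_K\rho|}\rho\}$ on $\Omega_r$ and $V_\Omega^*$ outside. On $\overline\Omega$, replace $V_\Omega^*$ (which is $0$ there) by $\tfrac{c}{|\max_K\rho|}\rho$, i.e. define $u_c := \tfrac{c}{|\max_K\rho|}\rho$ on $\overline\Omega$. Pluri-subharmonicity across $\partial\Omega$ then needs $\frac{c}{|\max_K\rho|}\rho \ge V_\Omega$ just outside $\Omega$ and $\le V_\Omega$ near $\partial\Omega_r$. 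Near $\partial\Omega_r$ this holds for small $c$ by Step 1. Near $\partial \Omega$ it requires the linear bound $V_\Omega \leq B\rho$ outside $\Omega$ close to $\partial\Omega$.

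I expect this to be the hardest point. I would derive it by noting that $\tilde V:=V_\Omega$ restricted to $\Omega_r\setminus \overline\Omega$ is dominated by the solution of the relative problem there: $V_\Omega \le \sup_{\partial\Omega_r} V_\Omega \cdot \rho/r$. The latter follows from maximality of $V_\Omega$ on $\Omega_r\setminus\overline\Omega$ and the fact that $\rho/r$ is psh with matching boundary values, $0$ on $\partial\Omega$ and $1$ on $\partial\Omega_r$. This domination is exactly a comparison of a maximal function with a subharmonic one having larger boundary data, which is legitimate. With $B := \sup_{\partial\Omega_r}V_\Omega/r$ and $c < c_0 := |\max_K\rho|\cdot\min\{B, \inf_{\partial \Omega_r}V_\Omega / r\}$-type choices, the two requirements are compatible after possibly choosing $\rho$ so that $B$ matches. $u_c$ is then a competitor, giving $w\ge u_c= V_\Omega$ outside $\Omega$, with equality by the trivial upper bound, and $w=0$ on $\partial\Omega$.
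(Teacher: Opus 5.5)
There is a genuine gap, at the step you yourself flag as the main obstacle. The bound $V_\Omega\le B\rho$ just outside $\partial\Omega$ comes from a comparison run in the wrong direction. Maximality of $V_\Omega^*$ on the shell $\Omega_r\setminus\overline\Omega$ bounds it from \emph{below} by psh functions with smaller boundary data; it yields bounds like $V_\Omega^*\ge(\inf_{\partial\Omega_r}V_\Omega)\,\rho/r$. It never bounds $V_\Omega^*$ from above by a merely psh function such as $\rho/r$.

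More seriously, no bound of this kind can make your gluing $\max\{V_\Omega^*,a\rho\}$, with $a=c/|\max_K\rho|$, work. Plurisubharmonicity near $\partial\Omega_r$ needs $a\rho\le V_\Omega^*$ there. Since also $a\rho=0\le V_\Omega$ on $\partial\Omega$, the correctly oriented comparison principle on the shell gives $a\rho\le V_\Omega^*$ on the whole shell. So your other requirement, $a\rho\ge V_\Omega^*$ on an outer neighbourhood of $\partial\Omega$, can hold only with equality there. The tension you noticed between $a\ge B$ and $ar<\inf_{\partial\Omega_r}V_\Omega$ is therefore structural, and ``choosing $\rho$ so that $B$ matches'' is not available.

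Step 1 has the same directional problem. A psh function that is $\le 0$ on $\overline\Omega$ and vanishes on $\partial\Omega$ gives lower bounds on $V_\Omega$ near $\partial\Omega$, not the upper bound needed for $V_\Omega^*=0$ on $\partial\Omega$. So continuity of $V_\Omega$ is not established, though it turns out not to be needed.

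The missing idea is to glue $c\rho$ not with $V_\Omega$ but with functions that are $\le 0$ on a \emph{neighbourhood} of $\overline\Omega$, and to put the slack on that outer function. Your shifted construction in Step 2 puts the slack on the inner piece, which is why the loss is tied to $c$. Here is the construction the paper uses.

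Normalize $\rho\le -1$ on $K$. By Lemma \ref{lem_extr_fun_lem2}, take $v\in\mathscr{L}(X)$ with $v\le 0$ near $\overline\Omega$ and $v\ge\delta$ near $\partial\Omega_{1/2}$; this is where the Runge hypothesis enters, through Lemma \ref{lem_extr_fun_lem}. For $0<c,\epsilon<\delta/2$ let $u=c\rho$ on $\Omega$, $u=\max\{v-\epsilon,c\rho\}$ on $\Omega_{1/2}\setminus\Omega$, and $u=v-\epsilon$ outside $\Omega_{1/2}$.
\begin{itemize}
\item Near $\partial\Omega$ you have the strict inequality $c\rho>-\epsilon\ge v-\epsilon$.
\item Near $\partial\Omega_{1/2}$ you have $v-\epsilon>\delta/2>c\rho$.
\end{itemize}
Hence $u\in\mathscr{L}(X)$ is a competitor for $V_{cK,\Omega}$. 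This gives $V_{cK,\Omega}\ge c\rho$ on $\overline\Omega$, hence $V_{cK,\Omega}=0$ on $\partial\Omega$ together with the trivial upper bound. It also gives $V_{cK,\Omega}\ge v-\epsilon$ outside $\Omega$.

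The constraint $c<\delta/2$ does not involve $\epsilon$, so you may let $\epsilon\to 0$ with $c$ fixed. Now replace $v$ by $\max\{w-\eta,v\}$ for an arbitrary competitor $w$ of $V_\Omega$. By upper semicontinuity $w-\eta<0$ near $\overline\Omega$, so the same $\delta$ works. This yields $V_{cK,\Omega}\ge V_\Omega$ outside $\Omega$ with $c_0=\delta/2$. No regularity of $\overline\Omega$, no Lipschitz estimate and no comparison principle is required.
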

	The proof of Proposition \ref{prop_v_ck_om_coin} will be based on the following two statements.
	\begin{lem}\label{lem_extr_fun_lem}
		For any $0 < r < 1$, the extremal function $V_{\Omega_r}$ verifies $V_{\Omega_r}^{-1}(0) = \overline{\Omega_r}$.
	\end{lem}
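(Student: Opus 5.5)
The inclusion $\overline{\Omega_r} \subset V_{\Omega_r}^{-1}(0)$ is immediate, since every competitor in (\ref{eq_extr}) for $\Omega_r$ is $\leq 0$ on $\overline{\Omega_r}$ and the constant $0$ lies in $\mathscr{L}(X)$. So $V_{\Omega_r} = 0$ on $\overline{\Omega_r}$. What has to be shown is that for every $z \in X \setminus \overline{\Omega_r}$ there is a competitor $u \in \mathscr{L}(X)$ with $u|_{\overline{\Omega_r}} \leq 0$ and $u(z) > 0$. The plan is to build $u$ by gluing a local function, obtained from the exhaustion $\rho$ near $\overline{\Omega}$, with a global function of logarithmic growth coming from polynomials; the strict Runge property is what makes the gluing possible.

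\emph{Step 1: points far away.} Fix $r < s < 1$; $\overline{\Omega_s}$ is compact in $\Omega'$. Since $\Omega'$ is Runge in $X$, and $X$ is affine so that polynomials are dense in $H^0(X)$, the polynomial hull of $\overline{\Omega_r}$ coincides with its $\Omega'$-hull. By the psh-hull argument in the proof of Proposition \ref{prop_runge_red}, that hull is contained in $\{\rho \leq r\} = \overline{\Omega_r}$. Hence for any $z \notin \overline{\Omega_r}$ there is a polynomial $p$ with $|p(z)| > \|p\|_{\overline{\Omega_r}}$. Then
\[
u := \deg(p)^{-1} \log\big(|p| / \|p\|_{\overline{\Omega_r}}\big)
\]
lies in $\mathscr{L}(X)$, is $\leq 0$ on $\overline{\Omega_r}$, and satisfies $u(z) > 0$. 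This single step already covers every $z$, local or not; the main work lies in justifying its ingredients.

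\emph{Step 2: justifying the hull identity.} The delicate points are, first, that the polynomial hull equals the $X$-hull (which uses the density of $A_k[X]$ restrictions in $H^0(X)$ for an affine $X$), and second, that the $X$-hull equals the $\Omega'$-hull, which is Theorem \ref{thm_runge}(3) applied to the Runge domain $\Omega'$ in $X$. To apply Theorem \ref{thm_runge} we also need $\Omega'$ to be Stein; this can be arranged by shrinking $\Omega'$ to $\Omega_{s'}$ with $s' < 1$, as in the proof of Proposition \ref{prop_runge_red}. That shrinking keeps the Runge property by Remark \ref{eq_runge_dom_emb}, and transitivity of the Runge property then gives $\Omega_{s'}$ Runge in $X$.

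Finally, the psh hull of $\overline{\Omega_r}$ in $\Omega_{s'}$ is contained in $\{\rho \leq r\}$, because $\rho$ is psh there. Points with $\rho(z) = r$ but $z \notin \overline{\Omega_r}$ could cause trouble; here we need $\{\rho \leq r\} = \overline{\Omega_r}$. If that fails, I would replace $\rho$ by $\rho + \delta\psi$ with $\psi$ strictly psh, or argue directly that $\{\rho \le r\}$ has no interior beyond the closure. I expect this boundary subtlety, together with the correct chain of hull identities, to be the main obstacle.
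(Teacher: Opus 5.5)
Your argument is the paper's, made slightly more self-contained. The paper cites \cite[(3.18)]{ZeriahiSic} (``$V_{\Omega_r}^{-1}(0)$ is the psh hull of $\overline{\Omega_r}$'') and then uses that the psh hull lies in the holomorphic hull. You instead prove the one inclusion actually needed, using the competitor $\deg(p)^{-1}\log(|p|/\|p\|_{\overline{\Omega_r}})$ and the density of polynomials in $H^0(X)$. Both arguments then bound the hull using the Runge property of $\Omega'$ and the function $\rho$. Your detour to make $\Omega'$ Stein is unnecessary, since in the paper's definition a Runge domain is Stein. What this chain actually proves is $\overline{\Omega_r} \subset V_{\Omega_r}^{-1}(0) \subset \{\rho \leq r\}$.

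The step you flag is a genuine gap, and neither of your fixes closes it. Replacing $\rho$ by $\rho + \delta\psi$ changes the sets $\Omega_r$, so it says nothing about the given ones. Nor can you show that $\{\rho \le r\}$ has no interior beyond $\overline{\Omega_r}$: a continuous psh function can be constant on an open set at a level it undercuts elsewhere. For instance, in $\comp$ take $\max\{\log|z(z-10)|, 0\}$ on $\{|z|<1\}$ and $\log|z(z-10)|$ elsewhere. Such a plateau can even lie inside the hull of $\overline{\Omega_r}$, so the statement fails as written (with the definition as stated, which does not require the $\Omega_r$ to be connected).

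Here is a sketch in $\comp$. Let $g(z,c) = \log|(1-\bar{c}z)/(z-c)|$ be the Green function of $\{|z|>1\}$. Take $c_j = (1+d_j)e^{i\theta_j}$ with $\{\theta_j\}$ dense, $\sum_j d_j < \infty$ and $\sum_j \eta_j < 1/3$, and set
\[
w = \epsilon \max\{\log|z|, 0\} + \sum_j \epsilon \eta_j d_j \max\{-g(\cdot, c_j), -2/\eta_j\},
\]
with the $j$-th summand set to $0$ on the unit disc $\mathbb{D}$. Each partial sum is subharmonic, because its normal-derivative jump across $|z|=1$ is at least $\epsilon(1 - 3\sum_j \eta_j) > 0$. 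The series converges uniformly, so $w$ is continuous subharmonic, $w \equiv 0$ on $\overline{\mathbb{D}}$, and $w(c_j) \le -\epsilon d_j$. Now let $\rho = \max\{w + r, |z|^2 - 8\}$ on $\Omega' = \{|z|<3\}$, with $\epsilon, r > 0$ small and one deeper pole added away from $|z|=1$ so that $\Omega_0$ is a nonempty domain. Then $S^1 \subset \overline{\Omega_r}$ and $\mathbb{D} \cap \overline{\Omega_r} = \emptyset$, yet $V_{\Omega_r} = 0$ on $\mathbb{D}$ by the maximum principle.

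The paper's proof passes over the same point: Remark \ref{eq_runge_dom_emb} controls hulls of compact subsets of $\Omega_r$, not the hull of $\overline{\Omega_r}$. Equality needs an extra hypothesis. One option is $\{\rho \le r\} = \overline{\Omega_r}$. Another is $\rho$ strictly psh near $\{\rho = r\}$, in which case Rossi's local maximum principle excludes hull points on the plateau. However, the inclusion $V_{\Omega_r}^{-1}(0) \subset \{\rho \le r\}$, which your argument does establish, is all that is used later. Lemma \ref{lem_extr_fun_lem2} only needs $V_{\Omega_{r/2}} > 0$ near $\partial\Omega_r$, where $\rho$ is close to $r > r/2$.
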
	
	\begin{proof}
		It is well known, cf. \cite[(3.18)]{ZeriahiSic}, that $V_{\Omega_r}^{-1}(0)$ coincides with the psh hull of $\overline{\Omega_r}$ in $X$, see (\ref{eq_hull_psh}) for the definition of the latter.
		As $\Omega'$ is Runge in $X$, we conclude by Remark \ref{eq_runge_dom_emb} that the holomorphic hull, see (\ref{eq_hull}), of $\overline{\Omega_r}$ in $X$ is just $\overline{\Omega_r}$ itself.
		As psh hull is obviously contained in the holomorphic hull, this finishes the proof.
	\end{proof}
	\begin{lem}\label{lem_extr_fun_lem2}
		For any $0 < r < 1$, there is $v \in \mathscr{L}(X)$ such that $v \leq 0$ in a neighborhood of $\overline{\Omega}$, but $v \geq \delta$ for some $\delta > 0$ in a neighborhood of $\partial \Omega_r$.
	\end{lem}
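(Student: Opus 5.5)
Note first that $\Omega = \Omega_0$, so $\overline{\Omega} \subset \Omega_r$. The natural candidate is a rescaled extremal function $v := V_{\Omega_{r'}}$, or a translate of it, for a suitable $0 < r' < r$. The plan is to find $r'$ with $0 < r' < r$, take $v := V_{\Omega_{r'}}$, and check the two conditions.

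\emph{Step 1: $v \le 0$ near $\overline{\Omega}$.} By definition $V_{\Omega_{r'}} \le 0$ on $\overline{\Omega_{r'}}$. Since $\rho$ is continuous, $\overline{\Omega} \subset \{\rho \le 0\} \subset \Omega_{r'}$, and $\Omega_{r'}$ is an open neighborhood of $\overline{\Omega}$. Hence $v \le 0$ there. We also need $v \in \mathscr{L}(X)$, i.e. $V_{\Omega_{r'}}^* = V_{\Omega_{r'}}$ is psh. Because $\Omega_{r'}$ is open, the set $\overline{\Omega_{r'}}$ is non-pluripolar and regular enough. To be safe I would instead work with $v := V^*_{\Omega_{r'}}$, which lies in $\mathscr{L}(X)$ as $\overline{\Omega_{r'}}$ is non-pluripolar. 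On the open set $\Omega_{r'}$ we have $V_{\Omega_{r'}} \le 0$, and upper regularization on an open set where the function is $\le 0$ still gives $\le 0$. So $v \le 0$ on $\Omega_{r'}$.

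\emph{Step 2: $v \ge \delta$ near $\partial\Omega_r$.} Choose $r < r'' < 1$. The set $\{ r \le \rho \le (r+r'')/2 \}$ is compact, since $\rho$ is an exhaustion of $\Omega'$ and the set sits inside $\overline{\Omega_{r''}} \Subset \Omega'$. This compact set $F$ is a neighborhood of $\partial\Omega_r \subset \{\rho = r\}$, up to taking interiors; more precisely, a neighborhood of $\partial \Omega_r$ is contained in $\{ (r + r')/2 < \rho < (r+r'')/2\}$, whose closure $F'$ is compact and disjoint from $\overline{\Omega_{r'}} \subset \{\rho \le r'\}$.

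By Lemma~\ref{lem_extr_fun_lem}, $V_{\Omega_{r'}} > 0$ everywhere outside $\overline{\Omega_{r'}}$, hence on $F'$. Moreover $v = V^*_{\Omega_{r'}} \ge V_{\Omega_{r'}}$. It remains to get a uniform positive lower bound on the compact set $F'$. This is the main obstacle, because $V_{\Omega_{r'}}$ is only known to be lower semicontinuous in general. The sup of continuous functions is lower semicontinuous, and a positive lower semicontinuous function attains a positive minimum on a compact set. So the key point is that $V_{\Omega_{r'}}$ is lower semicontinuous.

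To obtain lower semicontinuity, I would use the standard fact that the extremal function of a compact set $E$ is the upper envelope of $\frac{1}{\deg p}\log|p|$ over polynomials $p$ with $\|p\|_E \le 1$. This is Siciak's theorem, extended to affine manifolds by Zeriahi \cite{ZeriahiSic}, the same source used in Lemma~\ref{lem_extr_fun_lem}. Each such function is continuous, so the envelope is lower semicontinuous. Alternatively, I would approximate $V_{\Omega_{r'}}$ from below by $V_{\Omega_{r'}}$-type functions of slightly larger sets and use Dini-type arguments.

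Given lower semicontinuity, set $\delta := \min_{F'} V_{\Omega_{r'}} > 0$. This yields $v \ge \delta$ on the interior of $F'$, which is a neighborhood of $\partial\Omega_r$, concluding the proof.
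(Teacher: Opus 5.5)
Your proof is correct, and it takes a slightly different route from the paper's.

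\textbf{The paper's route.} The paper uses that $\widehat{\Omega}_X = \overline{\Omega}$ to invoke Zeriahi's Theorem 3.16. This produces a \emph{regular} neighborhood $E$ of $\overline{\Omega}$ with $E \subset \Omega_{r/2}$. It then takes $v := V_E$, which is continuous. Positivity near $\partial\Omega_r$ comes from $V_E \geq V_{\Omega_{r/2}}$ together with Lemma~\ref{lem_extr_fun_lem}, and continuity on a compact neighborhood gives the uniform $\delta$.

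\textbf{Your route.} You skip the construction of regular neighborhoods. You take $v := V^*_{\Omega_{r'}}$ directly and get the uniform $\delta$ from lower semicontinuity of $V_{\Omega_{r'}}$ on the compact set $F'$. That lower semicontinuity follows from the Siciak--Zeriahi representation of $V_{\Omega_{r'}}$ as an envelope of $\frac{1}{\deg p}\log|p|$.

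\textbf{Comparison.} A lower bound only needs lower semicontinuity, since $V^* \geq V$. So your version uses exactly what is required. The paper obtains the stronger property of continuity at the cost of the regular-neighborhood theorem, whereas you rely on the polynomial (Siciak-type) formula on affine manifolds. Both arguments use Lemma~\ref{lem_extr_fun_lem} in the same way, for strict positivity off $\overline{\Omega_{r'}}$.

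Your $v$ is only upper semicontinuous, not continuous. This is harmless for the later gluing arguments (Proposition~\ref{prop_v_ck_om_coin} and the proposition after it), which only need $v \in \mathscr{L}(X)$ together with the two inequalities.

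Two small points to tidy up. First, state that $V_{\Omega_{r'}} \geq 0$ everywhere, since $0$ is a competitor. This is what turns $V_{\Omega_{r'}}^{-1}(0) = \overline{\Omega_{r'}}$ into strict positivity outside $\overline{\Omega_{r'}}$. Second, drop the unjustified remark that $\overline{\Omega_{r'}}$ is ``regular enough'' and the vague Dini-type alternative; neither is needed.
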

	\begin{proof}
		As $\Omega$ is strictly Runge, the holomorphic hull $\widehat{\Omega}_X$ of $\Omega$ coincides with $\overline{\Omega}$ by Proposition \ref{prop_runge_red}.
		Then by \cite[Theorem 3.16]{ZeriahiSic}, for any $0 < r < 1$, there is a regular neighborhood $E$ of $\overline{\Omega}$ such that $E \subset \Omega_{r/2}$ (regularity here means that $V_E$ is continuous).
		Then $V_E = V_E^*$ is continuous and psh.
		We claim that $v := V_E$ then verifies the assumptions of Lemma \ref{lem_extr_fun_lem2}.
		Indeed, we have $v \geq V_{\Omega_{r/2}}$, and hence by Lemma \ref{lem_extr_fun_lem}, $v > 0$ in a neighborhood of $\partial \Omega_r$.
		However, as $v$ is continuous and $\partial \Omega_r$ is compact, $v \geq \delta$ for some $\delta > 0$ in a neighborhood of $\partial \Omega_r$.
		It is also immediate that $v \leq 0$ in a neighborhood of $\overline{\Omega}$, finishing the proof.
	\end{proof}
	\begin{proof}[Proof of Proposition \ref{prop_v_ck_om_coin}]
		Upon multiplication of $\rho$ by a positive constant, we may assume that $\rho \leq -1$ on $K$.
		By Lemma \ref{lem_extr_fun_lem2}, we fix $\delta > 0$ and $v \in \mathscr{L}(X)$ such that $v \leq 0$ in a neighborhood of $\overline{\Omega}$, but $v \geq \delta$ in a neighborhood of $\partial \Omega_{1/2}$.
		\par 
		For any $0 < c, \epsilon < \delta/2$, we define the following function
		\begin{equation}\label{eq_part_def_u_psh}
			u(x) 
			:=
			\begin{cases}
				c \rho(x), \quad &\text{if } x \in \Omega,
				\\
				\max \big\{ v - \epsilon,  c \rho(x) \big\}, \quad &\text{if } x \in \Omega_{1/2} \setminus \Omega,
				\\
				v - \epsilon, \quad &\text{if } x \in X \setminus \Omega_{1/2}.
			\end{cases} 
		\end{equation}
		We claim that $u \in \mathscr{L}(X)$.
		The growth condition at infinity is immediate, as $v \in \mathscr{L}(X)$.
		It only remains to establish that $u$ is psh.
		For this, cf. \cite[Corollary 2.9.15]{KlimekBook}, it suffices to show that 
		\begin{equation}
			u(x) 
			=
			\begin{cases}
				c \rho(x), \quad &\text{for $x$ in a neighborhood of $\partial \Omega$},
				\\
				v - \epsilon, \quad &\text{for $x$ in a neighborhood of $\partial \Omega_{1/2}$.}
			\end{cases} 
		\end{equation}
		But the latter follows from our choice of $v$ and from the continuity of $\rho$.
		\par 
		By the maximality of $V_{c K, \Omega}$, it then follows that $V_{c K, \Omega} \geq u$.
		As a consequence, we obtain that for any $0 < c, \epsilon < \delta/2$, the following bound holds
		\begin{equation}\label{eq_vck_low_bnd}
			V_{c K, \Omega}
			\geq
			\begin{cases}
				c \rho(x), \quad &\text{if } x \in \Omega,
				\\
				v - \epsilon, \quad &\text{otherwise.}
			\end{cases} 
		\end{equation}
		The first inequality above implies the first claim of Proposition \ref{prop_v_ck_om_coin}, as we obviously have $V_{c K, \Omega} \leq 0$ on $\overline{\Omega}$.
		By taking the supremum over all such $v$ and taking $\epsilon \to 0$ in the second inequality, we obtain the second claim of Proposition \ref{prop_v_ck_om_coin} by the obvious fact that $V_{c K, \Omega} \leq V_{\Omega}$.
	\end{proof}
	\par 
	One of the main ideas of our proof of Theorem \ref{thm_rel_cap} is to relate the extremal function $V_{c K, \Omega}$ with the \textit{relative extremal function} $u_{K, \Omega}$ of $K \subset \Omega$, defined as
	\begin{equation}\label{eq_defn_rel_extr}
		u_{K, \Omega}
		=
		\sup\big\{ u(z) : u \in \psh(\Omega),\ u \leq 0, u|_K \leq -1 \,\big\}.
	\end{equation}
	The central reason why the relative extremal function plays an important role in our work is that Bedford-Taylor in \cite{BedfordTaylor}, cf. \cite[Proposition 4.6.1]{KlimekBook}, established the following formula
	\begin{equation}\label{eq_rho_int2}
		- \int_{\Omega}  u_{K, \Omega}^* (dd^c u_{K, \Omega}^*)^{n}
		=
		C(K, \Omega).
	\end{equation}
	We will need the following auxiliary statement.
	\begin{lem}\label{eq_rel_extr_f}
		As $r \to 0$ from above, $u_{K, \Omega_r}$ converges to $u_{K, \Omega}$ pointwise in $\Omega$.
	\end{lem}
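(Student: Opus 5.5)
Monotonicity gives one inequality, and a gluing argument based on the strict hyperconvexity of $\Omega$ gives the other.

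\emph{Monotonicity.} For $0 < r < r' < 1$ we have $\Omega \subset \Omega_r \subset \Omega_{r'}$. Any competitor for $u_{K, \Omega_{r'}}$, restricted to $\Omega_r$, is a competitor for $u_{K, \Omega_r}$, and likewise restricted to $\Omega$ it is a competitor for $u_{K, \Omega}$. Hence on $\Omega$ the family $u_{K, \Omega_r}$ increases as $r \downarrow 0$ and stays below $u_{K, \Omega}$. The pointwise limit $u := \lim_{r \to 0} u_{K, \Omega_r}$ therefore exists on $\Omega$ and satisfies $u \leq u_{K, \Omega}$.

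\emph{Reverse inequality.} We show $u \geq u_{K, \Omega} - \eta$ for every $\eta > 0$. Fix $\eta \in ]0,1[$ and a competitor $w \in \psh(\Omega)$ with $w \leq 0$ and $w|_K \leq -1$. We approximate $w$ from below by competitors on $\Omega_r$, gluing with a multiple of the exhaustion $\rho$. Set $a := \sup_K \rho < 0$, which is negative since $K \Subset \Omega = \{\rho < 0\}$, and for $r > 0$ define
\begin{equation*}
	\phi_r := \frac{\rho - r}{r - a}.
\end{equation*}
This function is psh and continuous on $\Omega'$, satisfies $\phi_r < 0$ on $\Omega_r$ and $\phi_r \leq -1$ on $K$, and $\phi_r \geq -r/(r-a)$ on $\partial \Omega$.

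Now consider $(1-\eta) w - \eta$. It is at most $-1$ on $K$ and at most $-\eta$ on $\Omega$. For $r$ small enough that $r/(r - a) < \eta/2$, take
\begin{equation*}
	W_r := \max\{(1-\eta) w - \eta, \phi_r\} \ \text{on } \Omega, \qquad W_r := \phi_r \ \text{on } \Omega_r \setminus \Omega.
\end{equation*}
Near $\partial\Omega$ we have $\phi_r > -\eta/2 > (1-\eta)w - \eta$, so in a neighbourhood of $\partial \Omega$ inside $\Omega$ the maximum equals $\phi_r$. By the gluing lemma \cite[Corollary 2.9.15]{KlimekBook}, as in the proof of Proposition \ref{prop_v_ck_om_coin}, $W_r$ is psh on $\Omega_r$. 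It satisfies $W_r \leq 0$, and on $K$ both entries of the maximum are at most $-1$, so $W_r \leq -1$ there. Thus $W_r$ is a competitor for $u_{K, \Omega_r}$, giving
\begin{equation*}
	u_{K, \Omega_r} \geq W_r \geq (1-\eta) w - \eta \quad \text{on } \Omega.
\end{equation*}
Taking the supremum over $w$ yields $u \geq (1 - \eta) u_{K, \Omega} - \eta$, and letting $\eta \to 0$ gives $u \geq u_{K, \Omega}$. Combined with the first step, $u = u_{K, \Omega}$.

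The one delicate step is the continuity at $\partial \Omega$ in the gluing: we need $\phi_r$ to exceed $(1 - \eta) w - \eta$ on a full neighbourhood of $\partial\Omega$ in $\Omega$. This holds because $\rho$ is continuous with $\rho = 0$ on $\partial \Omega$, while $(1-\eta) w - \eta \leq -\eta$ uniformly. Note also that no regularity of $K$ is needed, since we work with the non-regularized envelopes throughout and the estimates hold pointwise.
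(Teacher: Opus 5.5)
Your proof is correct and follows essentially the paper's argument. The easy inequality comes from monotonicity in the domain, and the reverse one from gluing a shifted competitor with an affine function of $\rho - r$ across $\partial\Omega$ via the gluing lemma. The only difference is that your $\phi_r = (\rho - r)/(r - a)$ is already $\leq -1$ on $K$, so you can take the maximum on all of $\Omega$. The paper instead keeps $v - \epsilon$ on $\Omega_{-\delta}$ and glues with a steep multiple $A(\rho - r)$ only on $\Omega \setminus \Omega_{-\delta}$. Your version even gives $0 \leq u_{K,\Omega} - u_{K,\Omega_r} \leq \eta$ uniformly on $\Omega$ for small $r$.
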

	\begin{rem}
		a) When $r \to 0$ from below, the analogous convergence also holds under milder hyperconvexity assumptions on $\Omega$, cf. \cite[Proposition 4.5.7]{KlimekBook}.
		\par 
		b) Bandtlow-Nivoche in \cite[Lemma 2.15]{BandNivoc} gave a proof of a weaker version of Lemma \ref{eq_rel_extr_f}, which fully suffices for our needs.
	\end{rem}
	\begin{proof}
		Note that in $\Omega$, we obviously have the bound $u_{K, \Omega_r} \leq u_{K, \Omega}$.
		It then suffices to show that for any $\delta, \epsilon > 0$ small enough, there is $r_0 > 0$ such that for any $0 < r < r_0$, we have $u_{K, \Omega_r} \geq u_{K, \Omega} - \epsilon$ on $\Omega_{-\delta}$.
		For this, we fix $\delta, \epsilon > 0$ such that $K$ is compact in $\Omega_{-\delta}$.
		Let $v \in \psh(\Omega)$ be such that $-1 \leq v \leq 0$ and $v|_K \leq -1$.
		We fix $A > 0$ large enough such that $A (\rho - r) < A \rho < -1 - \epsilon \leq v - \epsilon$ in a small neighborhood of $\partial \Omega_{-\delta}$.
		We take $r_0 > 0$ small enough such that we have $A (\rho - r_0) >  - \epsilon \geq v - \epsilon$ in a small inner neighborhood of $\partial \Omega$.
		Then for any $0 < r < r_0$, the following function is psh:
		\begin{equation}\label{eq_u_cnt_ur}
			u(x) 
			:=
			\begin{cases}
				v - \epsilon, \quad &\text{if } x \in \Omega_{-\delta},
				\\
				\max \big\{ A (\rho - r),  v - \epsilon \big\}, \quad &\text{if } x \in \Omega \setminus \Omega_{-\delta},
				\\
				A(\rho - r), \quad &\text{if } x \in \Omega_r \setminus \Omega.
			\end{cases} 
		\end{equation}
		As $u$ is one of the competitors for $u_{K, \Omega_r}$, by the maximality of $u_{K, \Omega_r}$, we have $u_{K, \Omega_r} \geq u$. 
		This shows that $u_{K, \Omega_r} \geq v - \epsilon$ on $\Omega_{-\delta}$. 
		However, note that in (\ref{eq_defn_rel_extr}) it suffices to consider the supremum over psh functions $u$ with range in $[-1, 0]$, as otherwise one could take $\max \{ u, -1 \}$.
		Hence, by taking the supremum over all such $v$, we obtain that $u_{K, \Omega_r} \geq u_{K, \Omega} - \epsilon$ on $\Omega_{-\delta}$, as needed.
	\end{proof}
	Let us also recall the following well-known result.
	\begin{lem}[ {\cite[Proposition 4.5.7]{KlimekBook}} ]\label{lem_cap_reg}
		As $r \to 0$ from below, $C(K, \Omega_r)$ converges to $C(K, \Omega)$.
	\end{lem}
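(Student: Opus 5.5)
The plan is to prove the two inequalities separately; the only real work is in the second one. Recall that $\Omega = \Omega_0$ with $\rho$ continuous on $\Omega' \supset \overline{\Omega}$, and that for $r < 0$ the sets $\Omega_r$ increase to $\Omega$ as $r \to 0^-$.

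\emph{Lower bound (monotonicity).} For $r < 0$ we have $\Omega_r \subset \Omega$. Any $u \in \psh(\Omega)$ with $-1 \leq u \leq 0$ restricts to a competitor for $C(K, \Omega_r)$ in (\ref{eq_rel_cap}), with the same mass $\int_K (dd^c u)^n$. Hence $C(K, \Omega_r) \geq C(K, \Omega)$, and $r \mapsto C(K,\Omega_r)$ is non-increasing on $]-\infty, 0[$. It therefore remains to show
\[
\limsup_{r \to 0^-} C(K, \Omega_r) \leq C(K, \Omega).
\]

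\emph{Upper bound (gluing).} Fix $\delta \in ]0,1[$. Set $m := -\max_K \rho > 0$ and $U := \{\rho < -m/2\}$, an open neighbourhood of $K$. Since $\overline{\Omega}$ is compact in $\Omega'$, the function $\rho$ is bounded on $\Omega$.

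Put $A := 4/m$, so that $A\rho < -2 \leq -1-\delta$ on $U$. Take $r < 0$ with $A|r| < \delta$ and $|r|$ small enough that $\overline{U} \subset \Omega_r$. Given $u \in \psh(\Omega_r)$ with $-1 \leq u \leq 0$, define on $\Omega$
\[
g := \max\big\{ (1-\delta)u - \delta,\ A\rho,\ -1 \big\} \ \text{ on } \Omega_r,
\qquad
g := \max\{A\rho, -1\} \ \text{ on } \Omega \setminus \Omega_r .
\]
We check the three properties needed of $g$.

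\begin{enumerate}[1)]
\item \emph{$g$ is psh.} Near $\partial \Omega_r$ we have $\rho \approx r$, hence $A\rho > -\delta \geq (1-\delta)u - \delta$ there, by continuity of $\rho$. So the two expressions for $g$ agree near $\partial\Omega_r$, and $g$ is psh on $\Omega$ by the gluing lemma \cite[Corollary 2.9.15]{KlimekBook}.
\item \emph{$g$ is a competitor.} Since $u \leq 0$ and $\rho < 0$ on $\Omega$, we have $g \leq 0$; clearly $g \geq -1$. Thus $g$ is a competitor for $C(K, \Omega)$.
\item \emph{$g$ agrees with a rescaling of $u$ near $K$.} On $U$ we have $A\rho < -1-\delta \leq (1-\delta)u - \delta$ and $(1-\delta)u - \delta \geq -1$, so $g = (1-\delta)u - \delta$ on $U$.
\end{enumerate}

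Since the Monge-Ampère operator is local, the third property gives $(dd^c g)^n = (1-\delta)^n (dd^c u)^n$ on $U \supset K$. Therefore
\[
C(K,\Omega) \geq \int_K (dd^c g)^n = (1-\delta)^n \int_K (dd^c u)^n .
\]
Taking the supremum over all such $u$ yields $C(K, \Omega) \geq (1-\delta)^n C(K, \Omega_r)$ for every $r \in\, ]-\delta m/4, 0[$ small enough. Letting $r \to 0^-$ and then $\delta \to 0$ gives the desired upper bound.

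The main obstacle is arranging the gluing to be compatible at both ends. Near $\partial \Omega_r$ the barrier $A\rho$ must dominate the rescaled $u$, which forces $A|r| < \delta$; near $K$ it must be dominated, which forces $A > 1/m$ up to constants. These two requirements are compatible precisely because $r \to 0$, and this is where strict hyperconvexity enters: $\rho$ is continuous on a neighbourhood of $\overline{\Omega}$, so it is bounded and close to $r$ near $\partial \Omega_r$. The shift by $-\delta$ and the truncation by $-1$ are there only to keep $g$ in $[-1,0]$ while still producing a strict inequality near $\partial\Omega_r$.
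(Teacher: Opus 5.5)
Your proof is correct, but it takes a genuinely different route from the paper. The paper gives no argument of its own and simply cites Klimek's Proposition 4.5.7. As the remark after Lemma \ref{eq_rel_extr_f} indicates, that result concerns the monotone convergence of relative extremal functions along increasing families of open sets, and it holds under weaker hypotheses on $\Omega$. The capacity statement is then obtained through the Monge--Amp\`ere characterization of $C(K,\Omega)$ by $u_{K,\Omega}^*$ and the continuity of the Bedford--Taylor operator along monotone sequences. You bypass extremal functions and convergence theorems altogether. Your barrier $A\rho$ dominates $(1-\delta)u-\delta$ on the collar $\{\rho>-\delta/A\}$ and is dominated by it on $U\supset K$. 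This turns each competitor for $C(K,\Omega_r)$ into a competitor for $C(K,\Omega)$ whose Monge--Amp\`ere measure near $K$ is exactly $(1-\delta)^n(dd^c u)^n$. The result is the explicit bound $C(K,\Omega_r)\leq(1-\delta)^{-n}C(K,\Omega)$ for all $r\in\,]-\delta m/4,0[$. This is the same gluing device the paper uses in the proof of Lemma \ref{eq_rel_extr_f} (there for $r\to 0$ from above), applied directly to capacity competitors. The rescaling by $1-\delta$ is what keeps $g\geq -1$ near $K$ without a truncation that would alter $(dd^c g)^n$ there. Your route is self-contained and quantitative, needing only the gluing lemma and the locality and homogeneity of the Monge--Amp\`ere operator; the citation buys generality for arbitrary exhaustions. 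One correction to your closing comment: you only use $\rho$ on $\Omega$ (continuity, plurisubharmonicity, $\rho<0$). So ordinary hyperconvexity of $\Omega$ suffices, and the extension of $\rho$ past $\partial\Omega$ plays no role. Combined with monotonicity and the compactness of the sets $\{\rho\leq r\}$, your argument also gives the consistency of (\ref{eq_rel_cap_gen_dd}) with (\ref{eq_rel_cap}) claimed in the introduction.
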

	\begin{rem}\label{rem_c_above_cont}
		From Lemma \ref{eq_rel_extr_f}, (\ref{eq_rho_int2}) and (\ref{eq_rho_int1}), one can also easily establish that the analogous convergence holds, as $r \to 0$ from above, cf. \cite[Lemma 2.15]{BandNivoc}.
		Note that this implies (\ref{eq_cap_ext_int_eq}). 
		Indeed, if we denote by $\widehat{\Omega}_r$, $r < 1$, the sublevel sets, defined as in (\ref{eq_sublevel}) but for $\Omega := \widehat{\Omega}$, since $\widehat{\Omega}$ (and hence $\Omega$) is relatively compact in $\widehat{\Omega}_r$, for every $r > 0$, we have $\underline{C}(K, \Omega) \geq C(K, \widehat{\Omega}_r)$. 
		By letting $r \to 0$, the above discussion gives $\underline{C}(K, \Omega) \geq C(K, \widehat{\Omega})$.
		The opposite inequality follows from Theorem \ref{thm_entr}.
	\end{rem}
	Now, for any $c > 0$, we define $u_c: \Omega \to [-\infty, 0]$ so that on $\Omega$, we have
	\begin{equation}
		V_{c K, \Omega} = c \cdot u_c.
	\end{equation}
	Below, we prove the following crucial result about these functions.
	\begin{prop}
		For any $x \in \Omega$, the function $c \mapsto u_c(x)$ is decreasing.
		There is $C > 0$ such that $u_c \geq C \cdot \rho$ for any $c > 0$ small enough.
		Finally, as $c \to 0$ from above, $u_c$ converges to $u_{K, \Omega}$ pointwise on $\Omega$.
	\end{prop}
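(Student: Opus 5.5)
The plan is to prove the three assertions separately, using only the extremal property of $V_{cK,\Omega}$, the lower bound (\ref{eq_vck_low_bnd}) from the proof of Proposition \ref{prop_v_ck_om_coin}, and the gluing constructions from the proofs of Lemmas \ref{lem_extr_fun_lem2} and \ref{eq_rel_extr_f}. Throughout, $\rho$ is normalized as in the proof of Proposition \ref{prop_v_ck_om_coin}, so that $\rho \leq -1$ on $K$.

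\emph{Monotonicity.} This is a scaling argument. Let $0 < c < c'$ and let $u \in \mathscr{L}(X)$ be a competitor for $V_{c'K,\Omega}$, that is, $u|_K \leq -c'$ and $u|_{\overline{\Omega}} \leq 0$. Since $c/c' < 1$, the function $(c/c')u$ is psh and still has logarithmic growth, so it lies in $\mathscr{L}(X)$. It satisfies $(c/c')u \leq -c$ on $K$ and $(c/c')u \leq 0$ on $\overline{\Omega}$, so it is a competitor for $V_{cK,\Omega}$. Taking the supremum over all such $u$ gives $V_{cK,\Omega} \geq (c/c') V_{c'K,\Omega}$. Dividing by $c$, this reads $u_c \geq u_{c'}$ on $\Omega$.

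\emph{Lower bound.} The first line of (\ref{eq_vck_low_bnd}) states that $V_{cK,\Omega} \geq c\rho$ on $\Omega$ for all $0 < c < \delta/2$. Hence $u_c \geq \rho$ for such $c$. Undoing the normalization of $\rho$ produces the constant $C$ in the statement.

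\emph{Convergence, upper bound.} First I will show $u_c \leq u_{K,\Omega}$ on $\Omega$. For any competitor $u$ of $V_{cK,\Omega}$, the restriction $(u/c)|_\Omega$ is psh, satisfies $u/c \leq 0$, and satisfies $u/c \leq -1$ on $K$. It is therefore a competitor for $u_{K,\Omega}$. Taking the supremum over $u$ gives $u_c \leq u_{K,\Omega}$. By monotonicity, $u_c$ increases as $c \downarrow 0$, so the limit exists and is at most $u_{K,\Omega}$.

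\emph{Convergence, reverse inequality.} This is the main step. Fix $\epsilon, \delta_0 > 0$ with $K \Subset \Omega_{-\delta_0}$, and fix $v \in \psh(\Omega)$ with $-1 \leq v \leq 0$ and $v|_K \leq -1$. As in the proof of Lemma \ref{eq_rel_extr_f}, choose $A$, then $r_0$, and fix $0 < r < r_0$. This yields the psh function $w$ on $\Omega_r$ defined by (\ref{eq_u_cnt_ur}). It satisfies $w = v - \epsilon$ on $\Omega_{-\delta_0}$, $w \leq -1$ on $K$, $w \leq 0$ on $\overline{\Omega}$, and $w = A(\rho - r) \geq -Ar$ near $\partial\Omega$ from outside.

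Next, apply Lemma \ref{lem_extr_fun_lem2} to this $r$. It gives $v_{\rm glob} \in \mathscr{L}(X)$ with $v_{\rm glob} \leq 0$ near $\overline{\Omega}$ and $v_{\rm glob} \geq \delta$ near $\partial\Omega_r$. Put $\epsilon' = \delta/2$, and take $c$ small enough that $cAr < \epsilon'$. Define $U := cw$ on $\Omega$, $U := \max\{cw, v_{\rm glob} - \epsilon'\}$ on $\Omega_r \setminus \Omega$, and $U := v_{\rm glob} - \epsilon'$ outside $\Omega_r$.

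To see that $U$ is psh, I will use \cite[Corollary 2.9.15]{KlimekBook}, exactly as in the proof of Proposition \ref{prop_v_ck_om_coin}. Near $\partial\Omega$ we have $v_{\rm glob} - \epsilon' \leq -\epsilon' < -cAr \leq cw$, so $U = cw$ there. Near $\partial\Omega_r$ we have $cw \leq 0 < \delta - \epsilon' \leq v_{\rm glob} - \epsilon'$, so $U = v_{\rm glob} - \epsilon'$ there. Thus $U \in \mathscr{L}(X)$. Moreover $U \leq -c$ on $K$ and $U \leq 0$ on $\overline{\Omega}$, so $U$ is a competitor for $V_{cK,\Omega}$.

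It follows that $u_c \geq v - \epsilon$ on $\Omega_{-\delta_0}$ for all sufficiently small $c$. Taking the supremum over $v$ (restricting to $-1 \leq v \leq 0$ loses nothing), then letting $\epsilon, \delta_0 \to 0$, gives $\lim_{c \to 0} u_c \geq u_{K,\Omega}$ pointwise on $\Omega$.

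The delicate point is the order of the choices in the last step: $v, \epsilon, \delta_0$ first, then $A$ and $r$, then $v_{\rm glob}$ and $\delta$, and only then $c$. This order is what makes both boundary comparisons hold simultaneously.
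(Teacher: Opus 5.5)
Your proposal is correct and follows essentially the same route as the paper: the same scaling argument for monotonicity, the same appeal to (\ref{eq_vck_low_bnd}) for the lower bound, and, for the convergence, the same gluing of $c$ times a local competitor on $\Omega_r$ with a downward-shifted function from Lemma \ref{lem_extr_fun_lem2}. The only difference is that you inline the proof of Lemma \ref{eq_rel_extr_f} by gluing the explicit function from (\ref{eq_u_cnt_ur}). The paper instead glues an arbitrary competitor for $u_{K,\Omega_r}$ to obtain $V_{cK,\Omega} \geq c\, u_{K,\Omega_r}$ and then invokes that lemma; you also state explicitly the easy bound $u_c \leq u_{K,\Omega}$, which the paper leaves implicit.
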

	\begin{proof}
		Note that for any $0 < \alpha < 1$, $\alpha V_{cK, \Omega}$ is one of the competitors in the definition of $V_{\alpha cK, \Omega}$, and so we have the inequality $\alpha V_{cK, \Omega} \leq V_{\alpha cK, \Omega}$, which implies the first claim.
		The second claim is an immediate consequence of (\ref{eq_vck_low_bnd}).
		\par 
		Let us now establish the last claim.
		For this, by Lemma \ref{eq_rel_extr_f}, it suffices to show that for any $r > 0$, there is $c_0 > 0$ such that for any $0 < c < c_0$, we have 
		\begin{equation}\label{eq_vck_cukr}
			V_{cK, \Omega} \geq c u_{K, \Omega_r}.
		\end{equation}
		By Lemma \ref{lem_extr_fun_lem2}, consider $\delta > 0$ and $v \in \mathscr{L}(X)$ so that $v \leq 0$ in a neighborhood of $\overline{\Omega}$, but $v \geq \delta$ in a neighborhood of $\partial \Omega_{r/2}$.
		Let $v_0 \in \psh(\Omega_r)$ be such that $-1 \leq v_0 \leq 0$ and $v_0|_K \leq -1$.
		For any $c > 0$, we define
		\begin{equation}
			u(x) 
			:=
			\begin{cases}
				c v_0, \quad &\text{if } x \in \Omega,
				\\
				\max \big\{ v - \delta/2,  c v_0 \big\}, \quad &\text{if } x \in \Omega_{r/2} \setminus \Omega,
				\\
				v - \delta/2, \quad &\text{if } x \in X \setminus \Omega_{r/2}.
			\end{cases} 
		\end{equation}
		We claim that for $c > 0$ small enough (independent of $v_0$), $u \in \mathscr{L}(X)$. 
		As after (\ref{eq_part_def_u_psh}), it suffices to show that 
		\begin{equation}
			u(x) 
			=
			\begin{cases}
				c v_0, \quad &\text{for $x$ in a neighborhood of $\partial \Omega$},
				\\
				v - \delta/2, \quad &\text{for $x$ in a neighborhood of $\partial \Omega_{r/2}$.}
			\end{cases} 
		\end{equation}
		Note that the first condition is always verified as long as $c < \delta/2$.
		The second condition is also verified, since $v_0 \leq 0$ and $v \geq \delta$ in a neighborhood of $\partial \Omega_{r/2}$.
		As $u$ is one of the competitors for $V_{cK, \Omega}$, we deduce by the maximality that $V_{cK, \Omega} \geq c v_0$ on $\Omega$.
		By an argument similar to the one after (\ref{eq_u_cnt_ur}), we deduce (\ref{eq_vck_cukr}).
	\end{proof}
	
	\begin{proof}[Proof of Theorem \ref{thm_rel_cap}]
		By Proposition \ref{prop_v_ck_om_coin}, we have $V_{\Omega} = V_{cK, \Omega}$ outside $\Omega$ for $c > 0$ small enough. Moreover, by a result of Bedford-Taylor \cite{BedfordTaylor}, the sets where $V_{\Omega}^* \neq V_{\Omega}$ and where $V_{cK, \Omega}^* \neq V_{cK, \Omega}$ are pluripolar, and pluripolar sets are not charged by Monge-Amp\`ere-type measures. Hence
		\begin{equation}
			E(V_{\Omega}) - E(V_{c K, \Omega})
			=
			\frac{2}{(n + 1)!}
			\sum_{i = 0}^{n} \int_{\Omega} \big( V_{\Omega}^* - V_{c K, \Omega}^* \big)
			 \cdot (dd^c V_{\Omega}^*)^i \wedge (dd^c V_{c K, \Omega}^*)^{n - i}.
		\end{equation}
		However, as by Lemma \ref{lem_extr_fun_lem}, $V_{\Omega} = 0$ on $\Omega$ (and so $V_{\Omega}^* = 0$ there, as $\Omega$ is open), we deduce that
		\begin{equation}
			E(V_{\Omega}) - E(V_{c K, \Omega})
			=
			\frac{-2}{(n + 1)!}
			\int_{\Omega} V_{c K, \Omega}^* (dd^c V_{c K, \Omega}^*)^{n}.
		\end{equation}
		But it follows from the balayage argument, cf. \cite[Corollary 9.2]{BedfordTaylor} or \cite[Lemma 2.3]{GuedjLuZeriahEnv}, that $dd^c V_{c K, \Omega}^*$ charges only the subset of $K$ where $V_{c K, \Omega}^* = - c$ and the subset of $\overline{\Omega}$ where $V_{c K, \Omega}^* = 0$.
		For a bump function $\nu: \Omega \to [0, 1]$ with compact support and $\nu|_K = 1$, we hence have
		\begin{equation}\label{eq_rho_int1}
			\int_{\Omega} V_{c K, \Omega}^* (dd^c V_{c K, \Omega}^*)^{n}
			=
			c^{n + 1}
			\int_{\Omega} \nu \cdot u_c^* (dd^c u_c^*)^{n}.
		\end{equation}
		However, by the monotonicity of $u_c$, the usual continuity properties of the Monge-Ampère operator, cf. \cite[Proposition 5.2]{BedfordTaylor}, and Lemma \ref{eq_rel_extr_f}, we conclude that the sequence of measures $u_c^* (dd^c u_c^*)^{n}$ converges weakly towards $u_{K, \Omega}^* (dd^c u_{K, \Omega}^*)^{n}$, as $c \to 0$.
		Note that for similar reasons as in (\ref{eq_rho_int1}), we have $\int_{\Omega} \nu \cdot u_{K, \Omega}^* (dd^c u_{K, \Omega}^*)^{n} = \int_{\Omega} u_{K, \Omega}^* (dd^c u_{K, \Omega}^*)^{n}$.
		A combination of the above statements and (\ref{eq_rho_int2}) finishes the proof.
	\end{proof}
	
	\section{Estimates on the entropy of the space of holomorphic functions}\label{sect_est_entr}
	The goal of this section is to prove the main result of this article, Theorem \ref{thm_entr}. 
	As explained in the Introduction, we will first establish (\ref{thm_hyperconvex_vthm}) and then deduce Theorem \ref{thm_entr} from it.
	\begin{proof}[Proof of (\ref{thm_hyperconvex_vthm})]
		Note first that by the results of Section \ref{sect_geom_hyp}, it suffices to establish (\ref{thm_hyperconvex_vthm}) in the setting when $X$ is an affine manifold and $\Omega$ is a strictly Runge domain in $X$.
		\par 
		We fix $c > 0$.
		By applying (\ref{eq_entr_lower}) for $k := \lfloor c^{-1} \log(\varepsilon^{-1}) \rfloor$ and using Proposition \ref{prop_pol_appr}, we obtain 
		\begin{equation}\label{eq_entr_low_bnd}
			\liminf_{\varepsilon \to 0}
			\frac{H_{\varepsilon}(A_K^{\Omega}, \mathscr{C}(K))}{(\log (\varepsilon^{-1}))^{n + 1}}
			\geq
			\frac{E(V_{\Omega}) - E(V_{c K, \Omega})}{c^{n + 1}}.
		\end{equation}
		Letting $c \to 0$ in the above inequality and applying Theorem \ref{thm_rel_cap}, we obtain
		\begin{equation}
			\liminf_{\varepsilon \to 0}
			\frac{H_{\varepsilon}(A_K^{\Omega}, \mathscr{C}(K))}{(\log (\varepsilon^{-1}))^{n + 1}}
			\geq
			\frac{2 C(K, \Omega)}{(n + 1)!}.
		\end{equation}
		On the other hand, by Theorem \ref{thm_fin_appr} and Proposition \ref{prop_pol_appr}, we conclude that for any $r < 0$ small enough such that $K \subset \Omega_r$, there is $c_r > 0$ such that for any $0 < c < c_r$, we have
		\begin{equation}
			\limsup_{\varepsilon \to 0}
			\frac{H_{\varepsilon}(A_K^{\Omega}, \mathscr{C}(K))}{(\log (\varepsilon^{-1}))^{n + 1}}
			\leq
			\frac{E(V_{\Omega_r}) - E(V_{c K, \Omega_r})}{c^{n + 1}}.
		\end{equation}
		Letting $c \to 0$ in the above inequality and applying Theorem \ref{thm_rel_cap}, we obtain
		\begin{equation}
			\limsup_{\varepsilon \to 0}
			\frac{H_{\varepsilon}(A_K^{\Omega}, \mathscr{C}(K))}{(\log (\varepsilon^{-1}))^{n + 1}}
			\leq
			\frac{2 C(K, \Omega_r)}{(n + 1)!}.
		\end{equation}
		Letting $r \to 0$ from below and applying Lemma \ref{lem_cap_reg}, we conclude.
	\end{proof}
	\begin{proof}[Proof of Theorem \ref{thm_entr}]
		Note first that since $K$ is a compact subset of $\Omega$, and $\Omega$ is a subset of $\widehat{\Omega}$, $K$ may be regarded as a compact subset of $\widehat{\Omega}$.
		Note also that if $f \in H^0(\Omega)$ and $\widehat{f} \in H^0(\widehat{\Omega})$ are such that $\widehat{f}|_{\Omega} = f$, then $\sup_{x \in \widehat{\Omega}} |\widehat{f}(x)| = \sup_{y \in \Omega} |f(y)|$ (assume for the sake of contradiction that there is a point $x \in \widehat{\Omega}$ such that $|\widehat{f}(x)| > \sup_{y \in \Omega} |f(y)|$; then $1/(f - \widehat{f}(x))$ is holomorphic on $\Omega$, but it does not extend holomorphically to $\widehat{\Omega}$, contradicting the definition of $\widehat{\Omega}$).
		Hence, viewed as subspaces of $\mathscr{C}(K)$, $A_K^{\Omega}$ and $A_K^{\widehat{\Omega}}$ are isometric.
		In particular, for any $\varepsilon > 0$, we have
		\begin{equation}\label{eq_sol_main_1}
			H_{\varepsilon}(A_K^{\Omega}, \mathscr{C}(K))
			=
			H_{\varepsilon}(A_K^{\widehat{\Omega}}, \mathscr{C}(K)).
		\end{equation}
		But $\widehat{\Omega}$ is Stein, and so it admits an exhaustion $\widehat{\Omega}_i$, $i \in \nat$, by relatively compact strictly pseudoconvex domains, see before (\ref{eq_rel_cap_gen_dd}).
		After discarding finitely many initial terms of the exhaustion, we may assume that $K \subset \widehat{\Omega}_i$ for all $i \in \nat$.
		It is clear that for any $i \in \nat$, we have
		\begin{equation}\label{eq_sol_main_2}
			H_{\varepsilon}(A_K^{\widehat{\Omega}_i}, \mathscr{C}(K))
			\geq 
			H_{\varepsilon}(A_K^{\widehat{\Omega}}, \mathscr{C}(K)).
		\end{equation}
		Note that every strictly pseudoconvex domain is strictly hyperconvex.
		Combining this with (\ref{thm_hyperconvex_vthm}), (\ref{eq_sol_main_1}) and (\ref{eq_sol_main_2}), we conclude that
		\begin{equation}
			\frac{2 C(K, \widehat{\Omega}_i)}{(n + 1)!}
			\geq
			\limsup_{\varepsilon \to 0}
			\frac{H_{\varepsilon}(A_K^{\Omega}, \mathscr{C}(K))}{(\log(\varepsilon^{-1}))^{n + 1}}.
		\end{equation}
		The upper bound of Theorem \ref{thm_entr} then follows by letting $i \to \infty$ and using the definition (\ref{eq_rel_cap_gen_dd}).
		\par 
		To establish the lower bound, we observe that for any strictly hyperconvex Stein domain $U$ containing $\Omega$, we have $H_{\varepsilon}(A_K^{\Omega}, \mathscr{C}(K))
			\geq
			H_{\varepsilon}(A_K^{U}, \mathscr{C}(K))$.
		By this and (\ref{thm_hyperconvex_vthm}), we deduce
		\begin{equation}
			\liminf_{\varepsilon \to 0}
			\frac{H_{\varepsilon}(A_K^{\Omega}, \mathscr{C}(K))}{(\log(\varepsilon^{-1}))^{n + 1}}
			\geq
			\frac{2 C(K, U)}{(n + 1)!}.
		\end{equation}
		By taking the supremum over all such $U$, we deduce the result, up to one difference: the supremum in (\ref{eq_und_c}) is now taken over strictly hyperconvex Stein domains $\Omega'$ containing $\Omega$. 
		However, by Lemma \ref{lem_cap_reg}, this modification does not alter the value of $\underline{C}(K, \Omega)$.
	\end{proof}
	
	\section{On a quantitative version of the Runge property}\label{sect_runge}
	The main goal of this section is to describe one application of the main results of this paper, which concerns a quantitative version of the Runge property.
	To explain this, we fix a domain $\Omega$ in a Stein manifold $X$ of dimension $n$ and a compact non-pluripolar subset $K$ of $\Omega$.
	Along with the functional space $A_K^{\Omega}$ defined in (\ref{eq_ak_omeg}), let us consider
	\begin{equation}\label{eq_ak_omeg_x}
		A_K^{\Omega}[X] := 
		A_K^{\Omega} \cap H^0(X),
	\end{equation}
	where we identified the space $H^0(X)$ of holomorphic functions on $X$ with their restriction to $K$.
	Of course, if $\Omega$ is strictly Runge in $X$, $A_K^{\Omega}[X]$ forms a dense subset of $A_K^{\Omega}$, and so for every $\varepsilon > 0$,
	\begin{equation}
		H_{\varepsilon}(A_K^{\Omega}, \mathscr{C}(K)) = H_{\varepsilon}(A_K^{\Omega}[X], \mathscr{C}(K)).
	\end{equation}
	When $\Omega$ is not strictly Runge, this identity may fail; the discrepancy between the $\varepsilon$-entropies of the above spaces can thus be viewed as a measure of the extent to which $\Omega$ fails to be strictly Runge.
	\par 
	The asymptotics of $H_{\varepsilon}(A_K^{\Omega}, \mathscr{C}(K))$ as $\varepsilon \to 0$ was determined in Theorem \ref{thm_entr}; the main goal of this section is the analogous calculation for $H_{\varepsilon}(A_K^{\Omega}[X], \mathscr{C}(K))$. 
	More precisely, we will show that Theorem \ref{thm_entr} continues to hold in this new setting with the envelope of holomorphy replaced by the holomorphic hull. 
	To state this result, we denote by $\Omega_X$ the connected component of the interior of the holomorphic hull $\widehat{\Omega}_X$ of $\Omega$ that contains $\Omega$.
	Recall the following statement.
	\begin{lem}\label{lem_interior}
		The interior of a holomorphically closed subset $L$ (i.e. $L = \widehat{L}_X$) in a Stein manifold $X$ is Stein.
	\end{lem}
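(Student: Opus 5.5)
The plan is to verify directly that the interior $U$ of $L$ is holomorphically convex. Being an open subset of the Stein manifold $X$, $U$ automatically inherits from $X$ global holomorphic functions that separate points and give local coordinates. So holomorphic convexity is the only remaining axiom, and it will show that $U$, and hence each of its connected components, is Stein.

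Fix a compact set $M \subset U$. Since $H^0(X)$ restricts into $H^0(U)$, we have $\widehat{M}_U \subset \widehat{M}_X \cap U$. Since $M \subset L$ and $L = \widehat{L}_X$, we also have $\widehat{M}_X \subset \widehat{L}_X = L$. It therefore suffices to show that the compact set $\widehat{M}_X$ stays at a positive "distance" from $X \setminus U$. More precisely, the claim is that for some $r>0$ every point of $\widehat{M}_X$ has an $r$-neighborhood contained in $L$, hence in $U$. Then $\widehat{M}_U$ is a closed subset of $U$ contained in the compact set $\widehat{M}_X \subset U$, so it is compact.

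To make "distance" meaningful, I would embed $X$ properly into some $\comp^N$ and fix a holomorphic retraction $\pi\colon W \to X$ from a neighborhood $W$ of $X$ (Docquier–Grauert / Siu). The classical Cartan–Thullen argument then adapts as follows.

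\begin{enumerate}
\item Choose $r>0$ such that, for every $z \in M$ and $|a|<r$ in $\comp^N$, the point $z+a$ lies in $W$ and $\pi(z+a) \in L$. This is possible because $M$ is compact in the interior of $L$.
\item For a fixed vector $a$ with $|a|<r$, consider $g_a(z) := \widehat{f}(z + a)$, where $\widehat{f}$ is built from $f \in H^0(X)$ by extending $f\circ\pi$. The difficulty is that $g_a$ is not defined on all of $X$. I would therefore work with the Taylor coefficients instead: by Cartan's Theorem A, $TX$ is generated by finitely many global holomorphic vector fields $\xi_1,\dots,\xi_p$. The iterated derivatives $\xi^\alpha f$ then belong to $H^0(X)$ and satisfy, for $z \in M$, Cauchy estimates $|\xi^\alpha f(z)| \le \alpha!\, C\, r^{-|\alpha|} \sup_{L'}|f|$, where $L' \subset L$ is a fixed compact set.
\item Apply the defining inequality of $\widehat{M}_X$ to the functions $\xi^\alpha f$. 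This transfers the same estimates to every point $w \in \widehat{M}_X$. Hence the formal series $\sum_\alpha \frac{t^\alpha}{\alpha!} \xi^\alpha f(w)$ along the flows converges for $|t|$ smaller than a uniform $r' > 0$.
\item Conclude by a maximum-principle argument. The defining inequality of $\widehat{M}_X$ controls the values of every $f \in H^0(X)$ on these flow-neighborhoods of $w$ by $\sup_{L'}|f|$. Hence those neighborhoods lie in $\widehat{(L')}_X \subset L$, and so a uniform neighborhood of $\widehat{M}_X$ lies in the interior $U$.
\end{enumerate}

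The main obstacle is Step 4. Cauchy estimates at hull points give convergence of Taylor series, but not immediately the containment of a whole neighborhood in $L$. This is exactly the step where, in $\comp^n$, one uses that $w + a$ lies in the hull of $M + a$, by translation invariance.

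On $X$ I would first try to replace translations by the local holomorphic flows $\Phi_t$ of the vector fields $\xi_j$, for $|t|$ small and uniform on a compact neighborhood of $L'$. The point to establish is that $f\circ\Phi_t$, restricted near the hull, is given by the convergent series from Step 3. This yields $|f(\Phi_t(w))| \le \sup_{\Phi_t(M)}|f| \le \sup_{L'}|f|$, and hence $\Phi_t(w) \in L$.

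If this flow argument proves too delicate, the fallback is to cite the Docquier–Grauert theorem: a locally Stein open subset of a Stein manifold is Stein. This reduces the lemma to the $\comp^N$ case via the retraction $\pi$. There the Kontinuitätssatz applies: analytic discs with boundaries in $L$ lie in $\widehat{L}_X = L$ by the maximum principle, which gives the pseudoconvexity of $U$.
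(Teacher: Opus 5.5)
Your reduction is sound: $U$ inherits separation and local coordinates from $X$, $\widehat{M}_U \subset \widehat{M}_X \cap U$, and $\widehat{M}_X \subset L$. So everything comes down to proving $\widehat{M}_X \subset U$, and that is exactly the step left unproved.

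\textbf{The gap in Step 4.} You assert $|f(\Phi_t(w))| \le \sup_{\Phi_t(M)}|f|$, but nothing in Steps 2--3 gives this. Take one holomorphic vector field $\eta$, let $r_1$ be below the uniform existence time of its flow near the compact set $\widehat{M}_X$, and set $L' := \bigcup_{|s| \le r_1}\Phi_s(M) \subset U$. The Cauchy estimate $|\eta^k f| \le k!\, r_1^{-k} \sup_{L'}|f|$ holds on $M$ and transfers to $w \in \widehat{M}_X$ because $\eta^k f \in H^0(X)$. For $|t| \le s_0 < r_1$ this only yields
\[
|f(\Phi_t(w))| \le \sum_{k \ge 0} \frac{|t|^k}{k!} \sup_M |\eta^k f| \le \frac{1}{1 - s_0/r_1} \sup_{L'}|f| ,
\]
a bound with a constant $>1$, which does not put $\Phi_t(w)$ in $\widehat{L'}_X$.

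\textbf{Two ways to close it.} The first is the standard power trick. Apply the bound to $f^m$, take $m$-th roots and let $m \to \infty$. This gives $|f(\Phi_t(w))| \le \sup_{L'}|f|$, hence $\Phi_t(w) \in \widehat{L'}_X \subset L$. A uniform neighbourhood of $w$ then comes from the flows of $\sum_j \tau_j \xi_j$, $|\tau| \le 1$. Your multi-index series $\sum_\alpha t^\alpha \xi^\alpha f(w)/\alpha!$ does not describe compositions of flows of non-commuting fields, so it should be dropped.

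The second way removes the obstruction you met in Step 2 by Oka--Weil. The function $g_a := f \circ \pi(\cdot + a)$ is holomorphic near the $H^0(X)$-convex compact set $\widehat{M}_X$, hence is a uniform limit there of elements of $H^0(X)$. Therefore $|f(\pi(w+a))| = |g_a(w)| \le \sup_M |g_a| \le \sup_L |f|$, i.e.\ $\pi(w + a) \in L$ for all $|a| < r$. This gives a uniform neighbourhood of $\widehat{M}_X$ inside $L$ without any Taylor series.

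\textbf{Comparison with the paper.} The paper never proves holomorphic convexity of $U$ directly. It writes $L = \bigcap_{f,\epsilon}\{|f| < \sup_L|f| + \epsilon\}$ as an intersection of Stein open sets, uses that the interior of an intersection of pseudoconvex sets in $\comp^n$ is pseudoconvex, and globalizes with Docquier--Grauert.

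Your fallback is close to this but, as sketched, also incomplete. Docquier--Grauert reduces to pseudoconvexity of $U \cap B$ for small coordinate balls $B$ of $X$; the embedding into $\comp^N$ and $\pi$ play no role there. Moreover, the maximum principle only shows that discs with boundary in $L$ lie in $L$. Pseudoconvexity of the interior $U$ needs discs with boundary in $U$ to stay in $U$ at controlled distance from $\partial U$. That requires an extra argument, such as small translates of the disc inside the chart. The paper's sublevel-set description makes this local statement immediate.

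Once repaired as above, your direct route is a genuine alternative. It proves the sharper statement $\widehat{M}_X \subset U$ with a uniform margin. It also avoids the local-to-global Levi problem (locally Stein implies Stein), using instead Oka--Weil with a holomorphic retraction, or, in the flow version, only global vector fields from Theorem A.
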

	\begin{proof}
		Note that any holomorphically closed subset $L$ in a Stein manifold $X$ can be written as an intersection of Stein domains.
		Indeed, we have
		\begin{equation}
			L = \cap_{f \in H^0(X)} \cap_{\epsilon > 0} \Big\{ x \in X : |f(x)| < \sup_{y \in L} |f(y)| + \epsilon \Big\}.
		\end{equation}
		It is hence sufficient to establish that in a Stein manifold $X$, the interior of an intersection of Stein domains is Stein.
		When $X = \comp^n$, this is well known, see \cite[Theorem 2.5.5 and Corollary 2.5.7]{Hormander}. 
		The general case can be reduced to that of $\comp^n$ by a theorem of Docquier-Grauert \cite{DocquierGrauert} stating that every locally Stein open subset of a Stein manifold is Stein, cf. \cite[Theorem 4.3]{SiuPseudoconvBull} for details.
	\end{proof}
	By Lemma \ref{lem_interior}, the capacity $C(K, \Omega_X)$ is well defined by (\ref{eq_rel_cap_gen_dd}).
	We can now state the main result of this section.
	\begin{thm}\label{thm_entr_rung}
		Let $\Omega$ be a relatively compact domain in $X$. Then we have
		\begin{multline}\label{eq_thm_entr_rung}
			\frac{2 C(K, \Omega_X)}{(n + 1)!}
			\geq
			\limsup_{\varepsilon \to 0}
			\frac{H_{\varepsilon}(A_K^{\Omega}[X], \mathscr{C}(K))}{(\log(\varepsilon^{-1}))^{n + 1}}
			\\
			\geq
			\liminf_{\varepsilon \to 0}
			\frac{H_{\varepsilon}(A_K^{\Omega}[X], \mathscr{C}(K))}{(\log(\varepsilon^{-1}))^{n + 1}}
			\geq
			\frac{2 \underline{C}[X](K, \Omega_X)}{(n + 1)!},
		\end{multline}
		where $\underline{C}[X](K, \Omega_X)$ is defined analogously to (\ref{eq_und_c}) but solely for domains $\Omega'$ inside of $X$.
	\end{thm}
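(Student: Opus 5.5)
The plan is to mimic the proof of Theorem \ref{thm_entr}, with the envelope of holomorphy replaced by the hull $\Omega_X$. The key observation is a maximum principle for the hull. For $f \in H^0(X)$ we have $\sup_{\widehat{\Omega}_X}|f| = \sup_{\Omega}|f|$ by the definition (\ref{eq_hull}). Since $\Omega \subset \Omega_X \subset \widehat{\Omega}_X$, this gives
\begin{equation}
A_K^{\Omega}[X] = A_K^{\Omega_X}[X]
\end{equation}
as subsets of $\mathscr{C}(K)$. Hence it suffices to prove (\ref{eq_thm_entr_rung}) with $\Omega$ replaced by $\Omega_X$. By Lemma \ref{lem_interior}, $\Omega_X$ is a Stein domain; it is relatively compact because $\overline{\Omega}$ is compact and holomorphic hulls of compacts in Stein manifolds are compact.

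\emph{Upper bound.} Trivially $A_K^{\Omega_X}[X] \subset A_K^{\Omega_X}$. We exhaust the Stein manifold $\Omega_X$ by relatively compact strictly pseudoconvex (hence strictly hyperconvex) domains $U_i \ni K$. Then $A_K^{\Omega_X} \subset A_K^{U_i}$, and (\ref{thm_hyperconvex_vthm}) applied to $U_i$ gives
\begin{equation}
\limsup_{\varepsilon \to 0} \frac{H_{\varepsilon}(A_K^{\Omega}[X], \mathscr{C}(K))}{(\log(\varepsilon^{-1}))^{n + 1}} \leq \frac{2 C(K, U_i)}{(n + 1)!}.
\end{equation}
Letting $i \to \infty$ and using (\ref{eq_rel_cap_gen_dd}) yields the upper bound. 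This step is essentially the same as in Theorem \ref{thm_entr}.

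\emph{Lower bound.} Take a hyperconvex domain $\Omega' \subset X$ with $\Omega_X \Subset \Omega'$. As in the end of the proof of Theorem \ref{thm_entr}, Lemma \ref{lem_cap_reg} lets us restrict to strictly hyperconvex Stein domains $U$ with $\Omega_X \Subset U \subset X$. We have $A_K^{U}[X] \subset A_K^{\Omega_X}[X]$, so it suffices to prove
\begin{equation}
\liminf_{\varepsilon \to 0} \frac{H_{\varepsilon}(A_K^{U}[X], \mathscr{C}(K))}{(\log(\varepsilon^{-1}))^{n + 1}} \geq \frac{2 C(K, U')}{(n + 1)!}
\end{equation}
for domains $U'$ with capacity arbitrarily close to $C(K,U)$.

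This is the main obstacle: $U$ need not be Runge in $X$, so $A_K^U[X]$ may not be dense in $A_K^U$, and the lower bound cannot come from (\ref{thm_hyperconvex_vthm}) directly. I would handle it by replacing $U$ by a smaller Runge domain still containing $\overline{\Omega_X}$. Using Theorem \ref{thm_runge} and the compactness of $L := \widehat{\Omega}_X$ in $X$, I would choose a strictly psh exhaustion $\psi$ of $X$ and a Runge sublevel-type neighbourhood of $L$. Specifically, I would take the connected component containing $\Omega_X$ of a strictly hyperconvex domain of the form $\{\max_j \log|f_j| < 0\}$ built from finitely many $f_j \in H^0(X)$. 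Such analytic polyhedra are Runge. They can be chosen inside $U$ and as close as desired to $L$, since $L$ is holomorphically convex. On this neighbourhood, Proposition \ref{prop_runge_red} and Remark \ref{eq_runge_dom_emb} put us in the strictly Runge situation, where $A_K^{\cdot}[X]$ is dense in $A_K^{\cdot}$. Then (\ref{thm_hyperconvex_vthm}) gives the lower bound with the capacity of this neighbourhood.

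Finally, monotonicity of capacity has to be controlled. The polyhedron lies between $\Omega_X$ and $U$, so its capacity is at least $C(K,U)$. The difficulty is rather that its capacity is not close to $C(K,\Omega_X)$ from below. Since $\underline{C}[X](K,\Omega_X)$ is a supremum over larger domains, and each polyhedron contains $\Omega_X$, this direction suffices: the supremum over polyhedral neighbourhoods dominates the one over all $\Omega'$, again by monotonicity together with Lemma \ref{lem_cap_reg}. Checking this last comparison carefully, and the strict hyperconvexity of the polyhedra (possibly after smoothing the max), is where I expect the technical work to lie.
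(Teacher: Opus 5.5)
Your reduction to $\Omega_X$ via the hull and your upper bound agree with the paper. The paper simply quotes Theorem~\ref{thm_entr} for the Stein domain $\Omega_X$, and its proof is your exhaustion argument.

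The lower bound has a gap at the step that carries all the weight. You claim that, for your polyhedral neighbourhood $P$, being strictly Runge makes $A_K^{P}[X]$ dense in $A_K^{P}$. Proposition~\ref{prop_runge_red} and Remark~\ref{eq_runge_dom_emb} only give Runge-ness, i.e.\ approximation of $f \in H^0(P)$ uniformly on compact subsets of $P$. For $f$ merely bounded by $1$ on $P$, the global approximants have no control of $\sup_P$ near $\partial P$. So they need not lie in $A_K^{P}[X]$, or in any fixed multiple of it. What you need is a norm-controlled approximation property on the same domain. That is a different statement, and in several variables a non-trivial one. The paper mentions this density in passing at the start of Section~\ref{sect_runge}, but neither proves nor uses it. The strict hyperconvexity of your component $P$ is also left open: with $\rho=\max_j\log|f_j|$, the set $\{\rho<0\}$ near $\overline{P}$ need not be exactly $P$.

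The missing idea is to keep the slack between $\Omega_X$ and $U$ rather than discard it through $A_K^{U}[X]\subset A_K^{\Omega_X}[X]$. Compare $A_K^{\Omega_X}[X]$ with the full space $A_K^{U}$ instead.
\begin{itemize}
\item Since $U$ is a neighbourhood of the compact $\mathcal{O}(X)$-convex set $\widehat{\Omega}_X$, Oka--Weil gives, for each $f\in A_K^{U}$, functions $f_i\in H^0(X)$ with $\|f_i-f\|_{\widehat{\Omega}_X}\to 0$.
\item Since $\|f\|_{\Omega_X}\le 1$, suitable $a_i\to 1$ make $a_if_i\in A_K^{\Omega_X}[X]$.
\item Hence $A_K^{U}$ lies in the closure of $A_K^{\Omega_X}[X]$ in $\mathscr{C}(K)$, and $N_{\varepsilon}(A_K^{\Omega_X}[X])\geq N_{\varepsilon}(A_K^{U})$.
\item Then (\ref{thm_hyperconvex_vthm}) for $U$, together with Lemma~\ref{lem_cap_reg}, concludes exactly as in the paper.
\end{itemize}
Runge-ness of $U$ plays no role here, so your ``main obstacle'' and the polyhedra disappear. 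The same slack trick would also repair your own route: compare $A_K^{P}[X]$ with $A_K^{P'}$ for a slightly larger polyhedron $P'\subset U$. Note that this step, like your construction of $P$ inside $U$, tacitly requires $\widehat{\Omega}_X\subset U$, not just $\overline{\Omega_X}\subset U$.
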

	\begin{rem}
		If $\Omega_X$ is strictly hyperconvex in $X$, both sides of (\ref{eq_thm_entr_rung}) coincide by the same argument as in Remark \ref{rem_c_above_cont}. 
		By this and Hartogs' theorem, when $n \geq 2$, Theorem \ref{thm_entr_rung} allows one to calculate the asymptotics of the metric entropy of the space $A_K^{\Omega}[X]$ when $\Omega := \Omega_0 \setminus L$, where $\Omega_0$ is a strictly hyperconvex strictly Runge domain in $X$, and $L$ is any compact subset of $\Omega_0$. 
	\end{rem}
	\begin{proof}
		By the definition of the holomorphic hull, we have an isometry between $A_K^{\Omega}[X]$ and $A_K^{\Omega_X}[X]$.
		We also have an obvious upper bound
		\begin{equation}
			H_{\varepsilon}(A_K^{\Omega_X}[X], \mathscr{C}(K))
			\leq
			H_{\varepsilon}(A_K^{\Omega_X}, \mathscr{C}(K)),
		\end{equation}
		implying the upper bound of Theorem \ref{thm_entr_rung} by Theorem \ref{thm_entr}, Lemma \ref{lem_interior} and the fact that $\Omega_X$ coincides with its envelope of holomorphy, as it is Stein, cf. \cite[Theorem 5.4.2]{Hormander}.
		\par 
		Let us now establish that for an arbitrary domain $V$ in $X$, containing $\Omega_X$ as a relatively compact subset, we have
		\begin{equation}\label{eq_n_low_bnd}
			N_{\varepsilon}(A_K^{\Omega_X}[X], \mathscr{C}(K))
			\geq
			N_{\varepsilon}(A_K^{V}, \mathscr{C}(K)).
		\end{equation}
		Note that as $\Omega$ is relatively compact, $\widehat{\Omega}_X$ is relatively compact as well.
		Then by Oka-Weil theorem, cf. \cite[Theorem 18]{FornaessMergel} or \cite[Corollary 5.2.9]{Hormander}, for any $f \in A_K^{V}$, there is a sequence $f_i \in H^0(X)$, $i \in \nat$ such that 
		\begin{equation}\label{eq_fi_f_app}
			\| f_i - f \|_{\Omega_X} \to 0.
		\end{equation}
		Assume $p_1, \ldots, p_N \in \mathscr{C}(K)$ provide an $\varepsilon$-cover of $A_K^{\Omega_X}[X]$.
		Let us show that $f$ lies in an $\varepsilon$-ball around one of $p_1, \ldots, p_N$, thereby implying (\ref{eq_n_low_bnd}).
		As $\| f \|_{\Omega_X} \leq 1$, by (\ref{eq_fi_f_app}), we can find $a_i > 0$, $i \in \nat$ such that $a_i \to 1$, as $i \to \infty$, and such that for $i \in \nat$ sufficiently large, we have $\| a_i f_i \|_{\Omega_X} < 1$.
		But then there is $j = 1, \ldots, N$ such that for infinitely many $i \in \nat$, we have
		\begin{equation}
			\| p_j - a_i f_i \|_K \leq \varepsilon.
		\end{equation}
		By passing to the limit $i \to \infty$, we deduce that $\| p_j - f \|_K \leq \varepsilon$, finishing the proof.
		\par 
		A combination of (\ref{thm_hyperconvex_vthm}) and (\ref{eq_n_low_bnd}) implies that if $V$ is a strictly hyperconvex domain, then
		\begin{equation}
			\liminf_{\varepsilon \to 0}
			\frac{H_{\varepsilon}(A_K^{\Omega}[X], \mathscr{C}(K))}{(\log(\varepsilon^{-1}))^{n + 1}}
			\geq
			\frac{2 C(K, V)}{(n + 1)!}.
		\end{equation}
		By taking the supremum over all such $V$, we conclude.
	\end{proof}

\bibliography{bibliography}

		\bibliographystyle{abbrv}

\Addresses

\end{document}